\newtheorem{defn}{Definition}[section]
\newtheorem{lemma}[defn]{Lemma}
\newtheorem{cor}[defn]{Corollary}
\newtheorem{prop}[defn]{Proposition}
\newtheorem{theorem}[defn]{Theorem}
\DeclareRobustCommand{\pder}[1]{%
  \@ifnextchar\bgroup{\@pder{#1}}{\@pder{}{#1}}}
\newcommand{\@pder}[2]{\frac{\partial#1}{\partial#2}}
\begin{document}

\title{Recurrence of the frog model on the 3,2-alternating tree}

\author{Josh Rosenberg}
\date{}
\maketitle

\begin{abstract}Consider a growing system of random walks on the 3,2-alternating tree, where generations of nodes alternate between having two and three children.  Any time a particle lands on a node which has not been visited previously, a new particle is activated at that node, and begins its own random walk.  The model described belongs to a class of problems that are collectively referred to as the frog model.  Building on a recent  proof of recurrence (meaning infinitely many frogs hit the root with probability one) on the regular binary tree, this paper establishes recurrence for the 3,2-alternating case.
\end{abstract}

\section{Introduction}

The frog model is a system involving a collection of branching random walks on a rooted graph.  One active frog is initially positioned at the root along with some distribution of sleeping frogs on the set of non-root vertices.  The active frog performs a discrete time nearest-neighbor random walk on the graph which activates whatever sleeping frogs reside on the vertices on which it lands.  Upon being activated, frogs perform their own independent discrete time nearest-neighbor random walks which also activate sleeping frogs in the same fashion.  

Perhaps the most fundamental question that can be asked about any version of the frog model is: Is it recurrent?  By this we mean: Do infinitely many frogs return to the root with probability one?  This question has been explored for a variety of different scenarios.  Among them, one of the most recent cases has been that of the frog model on the n-ary tree.  In \cite{HJJ1} Christopher Hoffman, Tobias Johnson, and Matthew Junge addressed the issue of recurrence for the frog model on the n-ary tree with one sleeping frog per non-root vertex.  They established that the model is transient (i.e. the probability infinitely many frogs return to the root is zero) on the n-ary tree for $n\geq 5$, and recurrent on the binary tree.  The cases of the 3-ary and 4-ary trees, which remain open, were conjectured in their paper to be recurrent and transient respectively.

Among the results achieved in \cite{HJJ1}, arguably the most significant was the proof of recurrence for the binary tree.  The proof involved first presenting an altered form of the frog model in which the frogs perform (sometimes) terminating, non-backtracking random walks.  This variant on the original model, which is shown to have nice self-similarity properties, is appropriately named the self-similar frog model.  After the introduction of the self-similar model it is shown how it can be coupled with the original model so that the number of returns to the root in the original case always dominates that of the self-similar, thus reducing the problem to establishing recurrence in the self-similar case.  From here, the self similarity of the altered model is exploited to show that the probability generating function for the number of returns to the root is a fixed point of an easily expressed operator.  The authors then use a technique which they call Poisson thinning to show that the only fixed point of this operator (among a broad class of functions defined on $[0, 1]$) is 0, thus demonstrating recurrence for both the self-similar and (see above) original models.

The difficulty in extending the above result to the 3-ary tree seems to derive from the 3-ary case (it appears) being very close to criticality.  Specifically, efforts to construct an alternate model that is dominated by the original and possesses a high degree of self similarity appear to produce cases that cease to be recurrent.  While we believe that such an approach can in theory still work for the 3-ary tree, the competing necessities of achieving some sort of self-similarity $\textit{and}$ preserving recurrence appear to make the application of an approach at all similar to the one employed for the binary tree computationally intractable.  In this paper we therefore address an intermediate case; that of the 3,2-alternating tree where each node belonging to an even numbered generation has three children (with the root defined as generation 0) and each node belonging to an odd numbered generation has two.  Though some of the same difficulties that arise in attempting to extend the result for $\mathbb{T}_2$ to $\mathbb{T}_3$ (the 2-ary and 3-ary trees respectively) still present themselves in the case of $\mathbb{T}_{3,2}$ (the 3,2-alternating tree), we are nevertheless able to adapt the approach used to establish recurrence on $\mathbb{T}_2$ to the demands of this somewhat more unwieldy variation.

\medskip
\noindent
{\bf Statement and discussion of main result.} In its examination of the frog model on $\mathbb{T}_{3,2}$, where we start with one sleeping frog per non-root vertex and where activated frogs perform unbiased random walks on the tree, this paper centers around establishing the following theorem.

\begin{theorem}\label{theorem:neat1} The frog model on $\mathbb{T}_{3,2}$ is recurrent. 
\end{theorem}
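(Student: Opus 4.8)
The plan is to adapt, with the two-type modifications forced by the lack of vertex-transitivity, the three-step strategy used for $\mathbb{T}_2$ in \cite{HJJ1}: (i) introduce a self-similar variant of the frog model on $\mathbb{T}_{3,2}$ that is, as far as returns to the root are concerned, stochastically dominated by the true model; (ii) use the self-similarity to derive fixed-point equations for the generating functions encoding the number of returns to the root; and (iii) show via Poisson thinning that the only such fixed point forces infinitely many returns.

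First I would set up the self-similar model. Since $\mathbb{T}_{3,2}$ has two kinds of vertices, I would work with two rooted trees: $\mathcal{T}_A$, whose root has $3$ children and whose generations alternate $3,2,3,2,\dots$, and $\mathcal{T}_B$, whose root has $2$ children and whose generations alternate $2,3,2,3,\dots$. Note that $\mathbb{T}_{3,2}=\mathcal{T}_A$, that each child of the root of $\mathcal{T}_A$ is itself the root of a copy of $\mathcal{T}_B$, and that each child of the root of $\mathcal{T}_B$ is the root of a copy of $\mathcal{T}_A$. In the self-similar model the activated frogs perform the same sort of (possibly terminating) non-backtracking walks as in \cite{HJJ1}, but now we keep track of which of the two vertex types a frog currently occupies; with this bookkeeping, the set of frogs that ever visit the root of $\mathcal{T}_A$ (resp.\ $\mathcal{T}_B$) is built recursively from independent copies of the self-similar model on $\mathcal{T}_B$ (resp.\ $\mathcal{T}_A$). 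I would then reproduce the coupling of \cite{HJJ1} between the true and self-similar models — the two-type bookkeeping does not disrupt it — to reduce the theorem to proving recurrence for the self-similar model on $\mathcal{T}_A$.

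Next, let $V_A$ (resp.\ $V_B$) be the number of frogs that visit the root of the self-similar model on $\mathcal{T}_A$ (resp.\ $\mathcal{T}_B$), and put $f_A(s)=\E[s^{V_A}]$ and $f_B(s)=\E[s^{V_B}]$ for $s\in[0,1]$. Conditioning on the first move of the initial frog and on the child subtree it enters, self-similarity yields a coupled system $f_A=\Phi_A(f_B)$ and $f_B=\Phi_B(f_A)$, where each operator is assembled from the number of distinct frogs that, once activated inside a child subtree, return to its root, together with the independent thinning describing how many of those then step up into the parent. As in \cite{HJJ1}, the key structural fact is that these ``returning-frog'' counts are governed by Poisson random variables, which are preserved under the thinning that occurs at every level; this is what makes $\Phi_A$ and $\Phi_B$ amenable to analysis. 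Composing the two equations gives $f_A=(\Phi_A\circ\Phi_B)(f_A)$, and the goal becomes to show that, within the class of probability generating functions with $f_A(1)=1$, the only fixed point of $\Phi_A\circ\Phi_B$ is the function equal to $0$ on $[0,1)$, i.e.\ $V_A=\infty$ almost surely.

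I expect the main obstacle to be step (iii): executing the Poisson-thinning/fixed-point argument for the \emph{coupled} pair $(\Phi_A,\Phi_B)$ rather than a single operator. Because $\mathbb{T}_{3,2}$ has branching number $\sqrt 6\approx 2.449$, lying between the recurrent $\mathbb{T}_2$ and the (conjecturally recurrent) $\mathbb{T}_3$, the model is far closer to criticality than the binary case, so the inequalities that would force the fixed point to vanish have very little room to spare. One must tune the termination probabilities of the self-similar model so that recurrence survives the passage to the dominated model while the alternating recursion still closes, and then track two interleaved Poisson parameters — one born at the $3$-child levels and one at the $2$-child levels — while verifying the monotonicity and convexity properties of $\Phi_A\circ\Phi_B$ on $[0,1]$ needed to pin down its fixed point. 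That quantitative two-type bookkeeping is where essentially all of the difficulty resides.
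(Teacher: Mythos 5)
Your outline reproduces the paper's high-level strategy --- a non-backtracking, suitably stopped ``self-similar'' model coupled beneath the true model, a fixed-point equation for the generating function of the number of returns, and a monotonicity-plus-Poisson-thinning argument forcing that fixed point to vanish on $[0,1)$ --- but as written it has genuine gaps, not just unfinished bookkeeping. The main structural one: the recursion does not close in the clean two-type form $f_A=\Phi_A(f_B)$, $f_B=\Phi_B(f_A)$ with a single generating function per vertex type. In the dominated model the paper actually uses, a $3$-child vertex may be entered by \emph{two} active frogs (the frog originating at its parent together with the frog that activated the parent) provided its sibling is still asleep; this extra allowance is what keeps enough frogs alive so close to criticality, and its price is that the law of the frogs emerging from the subtree below such a vertex depends on whether it was activated by one frog or by two. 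Consequently the paper must carry two conditional generating functions at that level, $\mathcal{L}f$ and $\mathcal{H}f$, and the fixed-point operator $\mathcal{A}$ is assembled from both. If you instead force your recursion to close with one pgf per type by stopping every second incoming frog, you must re-verify that recurrence survives the loss, which is exactly the delicate tuning you flag but do not carry out.

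Second, your stated ``key structural fact'' --- that the returning-frog counts are Poisson and that Poissonity is preserved under the thinning --- is not the mechanism, here or in \cite{HJJ1}: these counts are not Poisson, and the operator does not map Poisson pgfs to Poisson pgfs. Poisson thinning means comparison: one proves that $\mathcal{A}$ is monotone on the class of probability generating functions, shows analytically that $\mathcal{A}[e^{a(x-1)}]\leq e^{(a+\frac{1}{20})(x-1)}$ for all $a\geq 15$, and then verifies (in the paper, by an interval-arithmetic computer computation) a finite chain $0=a_0<a_1<\dots<a_N$ with $a_N\geq 15$ and $\mathcal{A}^n1\leq e^{a_n(x-1)}$, whence $\mathcal{A}^n1\rightarrow 0$ on $[0,1)$ and so $f\equiv 0$ there. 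All of this quantitative content --- the precise stopping rules and the coupling making the induced walk exactly non-backtracking, the explicit operators, the monotonicity proof, and the estimates near $x=1$ together with the computer-assisted initial segment --- is deferred in your proposal; since that is where essentially the whole proof lives, what you have is a correct roadmap in the spirit of the paper rather than a proof.
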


This result is achieved by first introducing the non-backtracking frog model on $\mathbb{T}_{3,2}$ in which individual frogs perform uniformly random non-backtracking walks (i.e. at each step a frog chooses randomly from the set of all adjacent vertices $\textit{except}$ the one from which it just came) that are stopped at the root.  Letting $Z$ and $V'$ represent the number of times the root is landed on in the original and non-backtracking models respectively, we then show how the two models can be coupled in such a way that the path of each individual frog in the non-backtracking model is a subset of the path taken by the corresponding frog (meaning the frog that originated in the same location) in the original model.  From this it then follows that $Z$ stochastically dominates $V'$.  From here a variant of the non-backtracking model is introduced in which certain carefully chosen restrictions are placed on the number of frogs which can move down an edge (i.e. away from the root) without being stopped.  This new model, to be referred to as the self-similar model on account of its possessing certain self-similarity properties which we later establish, is then coupled with the original model via a natural coupling with the non-backtracking case, thus establishing that $V$ (the number of times the root is hit in the self-similar case) is dominated by $Z$.

Having reduced the task of proving Theorem \ref{theorem:neat1} to establishing recurrence for the self similar model, the model's self similarity properties are then exploited in order to show that $f$ (the probability generating function for $V$) is a fixed point for a rather complicated operator $\mathcal{A}$.  After showing that $\mathcal{A}$ possesses an important monotonicity property when applied to a large class of functions $S$, we then show that $\mathcal{A}^n 1\rightarrow 0$ on $[0, 1)$ as $n\rightarrow\infty$.  This last task involves a substantial amount of computation as $\mathcal{A}$ is iteratively applied to the probability generating functions for a series of Poisson distributions with gradually increasing means.  Thus, computer assistance is required at a certain juncture.  However, once this is accomplished it follows from the monotonicity property of $\mathcal{A}$ that $\mathcal{A}^n f\rightarrow 0$ on $[0, 1)$ which, because $\mathcal{A}f=f$, implies that $f(x)=0$ on $[0, 1)$.  From this the recurrence of the self-similar model immediately follows, thus completing the proof of Theorem \ref{theorem:neat1}.

\section{Recurrence for $\mathbb{T}_{3,2}$}

2.1. {\bf The non-backtracking frog model.}  In constructing the non-backtracking frog model and establishing a coupling with the ordinary model, we first define the Markov process $\Upsilon:\mathbb{N}\rightarrow\mathbb{T}_{3,2}$ as follows: If $\Upsilon(0)=\varnothing$ (where $\varnothing$ represents the root) then $\Upsilon$ simply proceeds as an unbiased random walk on $\mathbb{T}_{3,2}$.  If $\Upsilon(0)\neq\varnothing$ then $\Upsilon$ proceeds as an unbiased random walk $\textit{except}$ that if $\Upsilon(n)=\varnothing$ (for some n) and $\Upsilon(n+1)$ is one of the two nodes that does not belong to the sub-tree containing $\Upsilon(0)$, then with probability $\frac{5}{8}$ the process terminates at $\Upsilon(n+1)$ (i.e. $\Upsilon(j)=\Upsilon(n+1)\ \forall\ j\geq n+1$).  Next $\Upsilon$ is used to define the sequence $\{ t_n\}$ in the following way: Let $t_0 =0$ and, for $k\geq 0$, let $s_k=\text{sup}\{ s\geq t_k:\Upsilon(s)=\Upsilon(t_k)\}$.  If $s_k<\infty$ let $t_{k+1}=s_k+1$.  Otherwise, let $t_{k+1}=t_k$ (note that, modulo a set of measure 0, $s_k$ only equals infinity in the case where $\Upsilon$ is stopped at one of the children of the root as described above).

We now use $\Upsilon$ and $\{ t_k\}$ to define the new process $\Phi:\mathbb{N}\rightarrow\mathbb{T}_{3,2}$ as follows: First, if $\Upsilon(0)=\varnothing$ then we just let $\Phi(k)=\Upsilon(t_k)\ \forall\ k\geq 0$.  Otherwise, let $\Phi(0)=\Upsilon(0)$ and for each $k\geq 0$ let $$\Phi(k+1)=\left\{\begin{array}{ll}\varnothing &\text{if }\Phi(k)=\varnothing\\ \Upsilon(t_{k+1}) &\text{otherwise }\end{array}\right.$$  Next an important result regarding the process $\Phi$ will be established.

\begin{figure}[H]
    \centering
    \begin{tikzpicture}
    
    \draw[fill=black] (0,0) circle (.4ex);
    \draw[fill=black] (-2,-1.5) circle (.4ex);
    \draw[fill=black] (0,-1.5) circle (.4ex);
    \draw[fill=black] (2,-1.5) circle (.4ex);
    \draw (0,0) -- (-2,-1.5);
    \draw (0,0) -- (0,-1.5);
    \draw (0,0) -- (2,-1.5);
    
    \draw[fill=black] (-2.5,-2.7) circle (.4ex);
    \draw[fill=black] (-1.5,-2.7) circle (.4ex);
    \draw[fill=black] (-.5,-2.7) circle (.4ex);
    \draw[fill=black] (.5,-2.7) circle (.4ex);
    \draw[fill=black] (1.5,-2.7) circle (.4ex);
    \draw[fill=black] (2.5,-2.7) circle (.4ex);
    
    \draw (-1.5,-2.7) -- (-2,-1.5) -- (-2.5,-2.7);
    \draw (-.5,-2.7) -- (0,-1.5) -- (.5,-2.7);
    \draw (1.5,-2.7) -- (2,-1.5) -- (2.5,-2.7);
    
    \draw[fill = black] (-2.75,-3.7) circle (.4ex);
    \draw[fill = black] (-2.5,-3.7) circle (.4ex);
    \draw[fill = black] (-2.25,-3.7) circle (.4ex);

    \draw (-2.5,-2.7) -- (-2.75,-3.7);
    \draw (-2.5,-2.7) -- (-2.5,-3.7);
    \draw (-2.5,-2.7) -- (-2.25,-3.7);
    
    \draw[dashed] (-2.75,-3.7) -- (-2.75,-4.45);
    \draw[dashed] (-2.5,-3.7) -- (-2.5,-4.45);
    \draw[dashed] (-2.25,-3.7) -- (-2.25,-4.45);
    
    \draw[fill = black] (-1.75,-3.7) circle (.4ex);
    \draw[fill = black] (-1.5,-3.7) circle (.4ex);
    \draw[fill = black] (-1.25,-3.7) circle (.4ex);
    
    \draw (-1.5,-2.7) -- (-1.75,-3.7);
    \draw (-1.5,-2.7) -- (-1.5,-3.7);
    \draw (-1.5,-2.7) -- (-1.25,-3.7);
    
    \draw[dashed] (-1.75,-3.7) -- (-1.75,-4.45);
    \draw[dashed] (-1.5,-3.7) -- (-1.5,-4.45);
    \draw[dashed] (-1.25,-3.7) -- (-1.25,-4.45);
    
    \draw[fill = black] (-.75,-3.7) circle (.4ex);
    \draw[fill = black] (-.5,-3.7) circle (.4ex);
    \draw[fill = black] (-.25,-3.7) circle (.4ex);
    
    \draw (-.5,-2.7) -- (-.75,-3.7);
    \draw (-.5,-2.7) -- (-.5,-3.7);
    \draw (-.5,-2.7) -- (-.25,-3.7);
    
    \draw[dashed] (-.75,-3.7) -- (-.75,-4.45);
    \draw[dashed] (-.5,-3.7) -- (-.5,-4.45);
    \draw[dashed] (-.25,-3.7) -- (-.25,-4.45);
    
    \draw[fill = black] (.25,-3.7) circle (.4ex);
    \draw[fill = black] (.5,-3.7) circle (.4ex);
    \draw[fill = black] (.75,-3.7) circle (.4ex);
    
    \draw (.5,-2.7) -- (.25,-3.7);
    \draw (.5,-2.7) -- (.5,-3.7);
    \draw (.5,-2.7) -- (.75,-3.7);
    
    \draw[dashed] (.25,-3.7) -- (.25,-4.45);
    \draw[dashed] (.5,-3.7) -- (.5,-4.45);
    \draw[dashed] (.75,-3.7) -- (.75,-4.45);
    
    \draw[fill = black] (1.25,-3.7) circle (.4ex);
    \draw[fill = black] (1.5,-3.7) circle (.4ex);
    \draw[fill = black] (1.75,-3.7) circle (.4ex);
    
    \draw (1.5,-2.7) -- (1.25,-3.7);
    \draw (1.5,-2.7) -- (1.5,-3.7);
    \draw (1.5,-2.7) -- (1.75,-3.7);
    
    \draw[dashed] (1.25,-3.7) -- (1.25,-4.45);
    \draw[dashed] (1.5,-3.7) -- (1.5,-4.45);
    \draw[dashed] (1.75,-3.7) -- (1.75,-4.45);
    
    \draw[fill = black] (2.25,-3.7) circle (.4ex);
    \draw[fill = black] (2.5,-3.7) circle (.4ex);
    \draw[fill = black] (2.75,-3.7) circle (.4ex);
    
    \draw (2.5,-2.7) -- (2.25,-3.7);
    \draw (2.5,-2.7) -- (2.5,-3.7);
    \draw (2.5,-2.7) -- (2.75,-3.7);
    
    \draw[dashed] (2.25,-3.7) -- (2.25,-4.45);
    \draw[dashed] (2.5,-3.7) -- (2.5,-4.45);
    \draw[dashed] (2.75,-3.7) -- (2.75,-4.45);

    \draw (0,0) node [anchor=south]{$\varnothing$};
    \draw (2,-1.5) node [anchor=south west]{$a$};
    \draw (2.5,-2.7) node [anchor=south west]{$b$};
    \draw (1.5,-2.7) node [anchor=south east]{$b'$};
    \draw (2.75,-3.7) node [anchor=south west]{$c$};

    \end{tikzpicture}
    \caption{The first four levels of $\mathbb{T}_{3,2}$ with relevant nodes labeled.}
    \label{fig:Model}
    
    \end{figure}
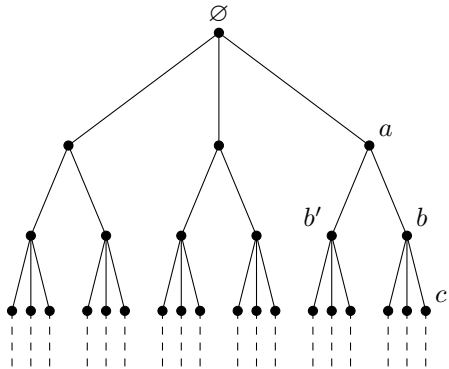
    
\begin{prop} \label{prop:bd} The process $\Phi$ is identical (in terms of its transition probabilities) to an unbiased non-backtracking random walk on $\mathbb{T}_{3,2}$ that terminates upon hitting the root.\end{prop}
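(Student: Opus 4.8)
The plan is to follow the sequence of vertices $v_k:=\Upsilon(t_k)$ (for $k\ge 0$, up to the step at which $\Phi$ freezes) that $\Phi$ records, to show it is a non-backtracking path, and to compute the conditional law of $v_{k+1}$ given $v_0,\dots,v_k$. \emph{First}, since $s_{k-1}=t_k-1$ is the last time at which $\Upsilon$ equals $v_{k-1}$, the trajectory $\bigl(\Upsilon(t_k+j)\bigr)_{j\ge 0}$ stays inside $C_k$, the connected component of $\mathbb{T}_{3,2}\setminus\{v_{k-1}\}$ containing $v_k$; hence $v_{k+1}$, a neighbour of $v_k$ lying in $C_k$, is a neighbour of $v_k$ other than $v_{k-1}$, and iterating shows $v_0,v_1,v_2,\dots$ is non-backtracking. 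Since a non-backtracking walk on a tree that ever steps from a vertex to one of its children is thereafter confined to that child's subtree, the path either climbs monotonically to $\varnothing$ — after which $\Phi$ stays at $\varnothing$ by its very definition — or, from some point on, descends monotonically; in the climbing case every $v_k\ne\varnothing$ is an ancestor of $\Upsilon(0)$, hence lies in the same child-of-$\varnothing$ subtree as $\Upsilon(0)$.

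\emph{Next} I would establish, by induction on $k$, that conditionally on $v_0,\dots,v_k$ with $v_k\ne\varnothing$ the trajectory $\bigl(\Upsilon(t_k+j)\bigr)_{j\ge 0}$ has the law of a walk with the $\Upsilon$-dynamics (the $\tfrac{5}{8}$ killing rule still referred to the subtree containing the original $\Upsilon(0)$), started from $v_k$ and conditioned never to hit $v_{k-1}$ (with no conditioning when $k=0$). The base case is immediate, and for the inductive step I would use the classical last-exit decomposition: conditionally on the first state after the last visit to $v_k$ being $z$, the trajectory from then on is the same dynamics started at $z$ and conditioned never to return to $v_k$ — and on a tree, for a walk started inside $C_k$, ``never returning to $v_k$'' already forces ``never hitting $v_{k-1}$'', so this matches the inductive hypothesis. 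Inserting this description into the last-exit formula should give, with $p$ the one-step law of the $\Upsilon$-dynamics and $h_k(z)$ the probability that a walk with the $\Upsilon$-dynamics started at $z$ never visits $v_k$,
\[
\Prob\bigl(v_{k+1}=z\mid v_0,\dots,v_k\bigr)=\frac{p(v_k,z)\,h_k(z)}{\sum_{z'}p(v_k,z')\,h_k(z')},
\]
with $z,z'$ ranging over the neighbours of $v_k$ different from $v_{k-1}$ (over all neighbours of $v_0$ when $k=0$). Since $p(v_k,\cdot)$ is uniform, it then suffices to show that $h_k(z)$ is the same for every legal $z$.

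\emph{The heart of the matter} is this last point, and it is where the constant $\tfrac{5}{8}$ is used. I would first record the return probabilities $q_{\mathrm{even}}=\tfrac{3}{8}$ and $q_{\mathrm{odd}}=\tfrac{4}{9}$ of ordinary simple random walk on $\mathbb{T}_{3,2}$ from an even-, resp.\ odd-, generation vertex to its parent (a pair of coupled equations). If $z$ is a child of $v_k$, then a walk from $z$ conditioned never to hit $v_k$ remains inside the subtree rooted at $z$, which never meets $\varnothing$, so the killing rule is inert and $h_k(z)$ equals $1-q_{\mathrm{even}}$ or $1-q_{\mathrm{odd}}$ according to the parity of the generation of $z$ — the same value for every child of $v_k$. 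The other case is when $v_{k-1}$ is a child of $v_k$, so that, by the first paragraph, $v_k$ is an ancestor of $\Upsilon(0)$ and lies in the home subtree, and the one remaining legal choice is the parent $p$ of $v_k$; here I would write $b_j$ for the probability that a walk with the $\Upsilon$-dynamics started from the generation-$j$ ancestor of $\Upsilon(0)$ ever reaches its child lying toward $\Upsilon(0)$, so that $h_k(p)=1-b_{g-1}$ where $g$ is the generation of $v_k$. A first-step decomposition at $\varnothing$ — the only place the $\tfrac{5}{8}$ killing is invoked — gives $b_0=\tfrac{3}{8}$, and the one-step recursion for $b_j$, after substituting the inductive value of $b_{j-1}$, collapses to exactly the recursion obeyed by the $q$'s; hence $b_j=q_{\mathrm{even}}$ for even $j$ and $q_{\mathrm{odd}}$ for odd $j$. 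Because $g-1$ and $g+1$ have the same parity, $h_k(p)=1-b_{g-1}$ then coincides with the common escape probability of the children of $v_k$, giving the required uniformity. The value $\tfrac{5}{8}=1-q_{\mathrm{even}}$ is exactly the killing probability that produces $b_0=q_{\mathrm{even}}$; equivalently, it is the value for which a walk stepping from $\varnothing$ into a non-home child returns to $\varnothing$ with probability $q_{\mathrm{even}}q_{\mathrm{odd}}$ — the chance of descending two generations and climbing back — so that the home subtree cannot detect that the tree has been truncated at $\varnothing$.

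Putting the pieces together, $\Prob\bigl(v_{k+1}=z\mid v_0,\dots,v_k\bigr)=1/(\deg(v_k)-1)$ for each neighbour $z\ne v_{k-1}$ of $v_k$ and $1/\deg(v_0)$ for the first step, while once $v_k=\varnothing$ the process $\Phi$ stays at $\varnothing$ by construction — precisely the transition law of an unbiased non-backtracking random walk on $\mathbb{T}_{3,2}$ stopped at $\varnothing$, as claimed. I expect the main obstacle to be the third step: one must identify which hitting and escape probabilities the killing actually perturbs, keep careful track of which subtrees are ``home'' and which are not, and verify that the coupled recursions really do close — it is only for the value $\tfrac{5}{8}$ that the one-step laws of $\Phi$ come out uniform. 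The second step also requires some care, because the times $t_k$ are not stopping times.
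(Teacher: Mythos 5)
Your proposal is correct, but it is organized around a different key lemma than the paper's proof. The paper proceeds by direct computation: it first finds the simple-random-walk return probabilities $p_1=\frac{4}{9}$ and $p_2=\frac{3}{8}$, then writes renewal-type equations for particular transition probabilities of $\Phi$ (the probability $\frac13$ of stepping to $\varnothing$ from level $1$, the probability $\frac14$ of stepping to the parent from level $2$, extended to all levels by induction and symmetry), and finally handles the dependence on the preceding step through the ratios $r_n=p_2p'_n/p'_{n+1}$ (resp.\ $p_1p'_n/p'_{n+1}$). You instead prove one structural statement — via the last-exit decomposition, conditionally on the past the erased chain steps from $v_k$ to a legal neighbour $z$ with probability proportional to $p(v_k,z)\,h_k(z)$ — and reduce the whole proposition to the uniformity of the escape probabilities $h_k$ over the legal neighbours; that uniformity is exactly where the $\frac58$ enters, through $b_0=\frac38$, and your $b_j$ recursion is the all-levels extension of the paper's computation of $q$ (indeed $b_j$ collapses onto $p_2,p_1$ by parity, just as you say). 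The two arguments share every numerical ingredient, but yours buys a more transparent treatment of the conditioning: the harmonic-weight formula delivers the conditional law of each step given the entire past in one stroke, where the paper's level-by-level induction and its $r_n$ manipulation are left rather terse; conversely, the paper's route is more elementary, avoiding the general last-exit machinery. Two small points to tie off in a written version: dispatch the case $\Upsilon(0)=\varnothing$ explicitly (plain walk, no killing; your formula plus symmetry gives the uniform first step, matching the paper's one-line symmetry argument), and record that transience of the (possibly killed) walk makes the last-visit times almost surely finite, so the $t_k$ and the decomposition are well defined — the paper acknowledges this only with its ``modulo a set of measure 0'' remark.
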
  
    
\begin{proof} In the case where $\Phi(0)=\varnothing$ the process $\Phi$ moves one step away from the root each time, so the conclusion follows by symmetry.  When $\Phi(0)\neq\varnothing$ a more complicated argument will be required.  We start by making some preliminary computations.  Let $p_1$ represent the probability that an unbiased random walk on $\mathbb{T}_{3,2}$ that starts at $a$ (see Figure 1 above) ever hits the root.  Likewise, let $p_2$ represent the probability that an unbiased random walk on $\mathbb{T}_{3,2}$ starting at $b$ ever hits $a$.  More generally we see by symmetry that the probability an unbiased random walk on $\mathbb{T}_{3,2}$ starting at a node on an odd numbered level (even resp.) ever hits the parent of this node is $p_1$ ($p_2$ resp.).  Calculating these values we get the expressions $$p_1=\frac{1}{3}+\frac{2}{3}p_2 p_1,\ p_2=\frac{1}{4}+\frac{3}{4}p_1 p_2\implies p_1=\frac{1}{3-2p_2},\ p_2=\frac{1}{4-3p_1}$$  Solving for $p_1$ then gives $$p_1=\frac{1}{3-\frac{2}{4-3p_1}}=\frac{4-3p_1}{10-9p_1}\implies\ 9p_1^2-13p_1+4=0\implies p_1=\frac{4}{9}\ \text{or}\ 1$$  Since the value 1 can clearly be disregarded this then gives $p_1=\frac{4}{9}$.  Plugging this into the formula for $p_2$ above we get $p_2=\frac{3}{8}$.
    
Returning now to the task of establishing that the transition probabilities of $\Phi$ match those of the non-backtracking random walk, we begin by addressing the task of showing that $\mathbb{P}(\Phi(1)=\varnothing|\Phi(0)=a)=\frac{1}{3}$.  Denoting $\mathbb{P}(\Phi(1)=\varnothing|\Phi(0)=a)$ as $p$ and $\mathbb{P}(\Upsilon(n+j)=a\ \text{for some }j>0|\Upsilon(n)=\varnothing)$ as $q$ (where we're assuming here that $\Upsilon$ originates in the sub-tree rooted at $a$), we find (based on the definition of $\Phi$) that $$p=\frac{2}{3}p_2 p+\frac{1}{3}(1-q)+\frac{1}{3}qp=\frac{1}{4}p+\frac{1}{3}(1-q)+\frac{1}{3}qp\implies p=\frac{4-4q}{9-4q}$$  Noting that $$q=\frac{1}{3}+\frac{1}{4}p_1q=\frac{1}{3}+\frac{1}{9}q\implies q=\frac{3}{8}$$ it then follows from the above formula for $p$ in terms of $q$, that indeed $p=\frac{1}{3}$.  Using this, symmetry implies that $\mathbb{P}(\Phi(1)=b|\Phi(0)=a)=\mathbb{P}(\Phi(1)=b'|\Phi(0)=a)=\frac{1}{3}$.  Hence, we find that in the case where $t=0$ and $\Phi(0)=a$, the transition probabilities of $\Phi$ do agree with those of the non-backtracking random walk that is stopped at the root.
    
Moving on, we now want to show that $\mathbb{P}(\Phi(1)=a|\Phi(0)=b)=\frac{1}{4}$.  Denoting this last probability as $p$ and the value $\mathbb{P}(\Upsilon(n+j)=b\ \text{for some }j>0|\Upsilon(n)=a)$ as $q$ (again assuming $\Upsilon$ originates in the sub-tree rooted at $a$), it follows from the definition of $\Phi$ that $$p=\frac{1}{4}(1-q)+\frac{1}{4}qp+\frac{3}{4}p_1 p=\frac{1}{4}(1-q)+\frac{1}{4}qp+\frac{1}{3}p\implies p=\frac{3-3q}{8-3q}$$  Using the fact that $$q=\frac{1}{3}+\frac{1}{3}p_2 q+\frac{1}{8}q=\frac{1}{3}+\frac{1}{4}q\implies q=\frac{4}{9}$$ our formula for $p$ in terms of $q$ then tells us that $p=\frac{1}{4}$.  Again using symmetry, we find that if $\Phi$ starts at $b$ at time $t=0$, it then goes to each of the four adjacent nodes with equal probability.  Generalizing these results, if we now let $p'_n$ represent the probability that the first step made by $\Phi$ is towards the root (given that $\Phi(0)$ resides at level n) and let $q'_n$ represent the probability that $\Upsilon$ (starting at level n-1) ever hits a particular child node of its starting node (e.g. the rightmost node), we find that it follows from induction, along with the computations for the base cases $p'_1, p'_2, q'_1,\text{ and }q'_2$ given above, that $$p'_n=\left\{\begin{array}{ll}\frac{1}{3} &\text{for n odd }\\ \frac{1}{4} &\text{for n even}\end{array}\right.$$  Once again exploiting symmetry, we find that the above result implies that when beginning at a non-root vertex, $\Phi$ moves to each of the adjacent vertices with equal probability.  
    
Now note that by the same symmetry considerations which ensure that
the transition probabilities for $\Phi$, when begun at the root, match those in
the non-backtracking case, it also follows that, following a down step, $\Phi$'s transition probabilities again match those of the
non-backtracking random walk (stopped at $\varnothing$).  Coupling this with
the results from the previous paragraph, the only remaining task
involved in establishing the proposition is addressing the case of $\Phi$'s
transition probabilities after it has just taken a step $\textit{towards}$ the root.
Since $\Phi$ always stops upon hitting the root, the case where its previous step brought it to $\varnothing$ is immediate.  Now if we let $r_n$ (for $n\geq
1$) represent the probability of $\Phi$ taking a step towards the root,
conditioned on its previous step having brought it from level $n+1$ of
$\mathbb{T}_{3,2}$ to level $n$, we find that $$r_n=\left\{\begin{array}{ll}\frac{p_2 p'_n}{p'_{n+1}} &\text{if n is odd}\\ \frac{p_1p'_n}{p'_{n+1}} &\text{if n is
even}\end{array}\right.$$Plugging in the values for $p_1,\ p_2,\text{ and }p'_n$,
then gives $r_n=\frac{1}{2}$ for n odd and $r_n=\frac{1}{3}$ for n even.  From this it then follows that, conditioned on having just moved from a node to its parent (not the root), $\Phi$ then moves to each of the available adjacent nodes (other than the one it just came from) with equal probability.  Hence, we've completed the task of showing that the transition probabilities of $\Phi$ match those of the non-backtracking random walk that is stopped at the root, and thus, have completed the proof of the proposition. 
\end{proof}
    
Having obtained the above result, the proceeding corollary regarding the non-backtracking frog model on $\mathbb{T}_{3,2}$ (see description in introduction) follows as an almost immediate consequence.
    
\begin{cor}\label{cor:jr}  There exists a coupling between the non-backtracking and original frog models on $\mathbb{T}_{3,2}$ where the path of each non-backtracking frog is a subset of the path of the corresponding frog in the original model.
\end{cor}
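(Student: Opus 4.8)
The plan is to realize both frog models on a single probability space so that the non-backtracking model becomes, frog by frog, a ``contraction'' of the original one, using the construction of Section 2.1. To each vertex $v\in\mathbb{T}_{3,2}$ (including $\varnothing$) I would attach an independent packet of randomness $\omega_v$ rich enough to drive an unbiased nearest-neighbor random walk started at $v$ together with the auxiliary $\tfrac58$-coins that appear in the definition of $\Upsilon$. From $\omega_v$ I read off two trajectories: $X_v$, the unbiased random walk started at $v$ (this will be the trajectory of the frog born at $v$ in the original model once it is activated), and $\Phi_v$, obtained from $X_v$ and the auxiliary coins exactly as in Section 2.1 --- first form $\Upsilon_v$, which coincides with $X_v$ except that it may be killed the first time it steps from $\varnothing$ to a child not lying above $v$, and then pass to the subsequence $\{t_k\}$ to obtain $\Phi_v$. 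By Proposition \ref{prop:bd}, $\Phi_v$ has exactly the law of an unbiased non-backtracking walk from $v$ stopped at $\varnothing$, so this is a legitimate coupling of the two families of walks. Moreover, by construction the range of $\Upsilon_v$ is contained in the range of $X_v$ (killing only erases a terminal tail) and the range of $\Phi_v$ is contained in the range of $\Upsilon_v$ (passing to $\{t_k\}$ only deletes vertices), so the set of sites visited by $\Phi_v$ is contained in the set of sites visited by $X_v$.

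Next I would run both frog models using these trajectories: the frog eventually activated at $v$ walks according to $X_v$ in the original model and according to $\Phi_v$ in the non-backtracking model, time-shifted by its activation time. I would then prove, by induction on the order in which frogs become activated in the non-backtracking model, that every site visited by an active frog in the non-backtracking model is also visited by an active frog in the original model. The base case is the root frog, whose non-backtracking path $\Phi_\varnothing$ is contained in its original path $X_\varnothing$. For the inductive step, if the non-backtracking frog at $v$ is activated because a previously-activated non-backtracking frog, say the one born at $u$, visits $v$, then by the induction hypothesis that frog is active in the original model and its original path contains its non-backtracking path, so it visits $v$ in the original model as well; hence $v$'s frog is activated in the original model, and once active it follows $X_v$, which contains $\Phi_v$ --- exactly what is needed to continue the induction. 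In particular every frog activated in the non-backtracking model is activated in the original model, and its non-backtracking path is a subset of the path of the corresponding original frog, which is the assertion of the corollary.

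The one genuinely delicate point is the first step: one must confirm that $\Upsilon$, despite its killing rule, can legitimately be driven by an unbiased walk plus independent coins, so that ``$\Upsilon_v$ equals $X_v$ with at most a terminal tail erased'' is literally true. This should follow directly from the description of $\Upsilon$ in Section 2.1, since away from the killing event $\Upsilon$ is an unbiased walk, and the killing event is a function of the walk's own trajectory (the first excursion from $\varnothing$ to a non-ancestral child of $v$) together with one extra coin, so $\Upsilon_v$ is obtained from $X_v$ by stopping at a stopping time. A secondary bookkeeping issue is that activation times in the original model may be strictly smaller than in the non-backtracking model; this is harmless, since the induction is carried out along the non-backtracking activation order and only ever requires that the relevant original-model frog has already been activated, which the induction hypothesis provides.
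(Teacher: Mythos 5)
Your proposal is correct and follows essentially the same route as the paper: build each frog's non-backtracking path from its unbiased walk via the $\Upsilon$/$\Phi$ construction (so that, by Proposition \ref{prop:bd}, the law is right and the visited vertex set only shrinks), and then assemble the two frog models from these per-frog trajectories. The only slip is cosmetic: the killing in $\Upsilon$ can occur at \emph{every} step from $\varnothing$ to a non-ancestral child (an independent $\tfrac{5}{8}$-coin each time), not only the first such step, but this still realizes $\Upsilon_v$ as $X_v$ frozen at a randomized stopping time, so your range-containment and the activation-order induction (which the paper leaves implicit) go through unchanged.
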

    
\begin{proof}  First recalling how the process $\Phi$ was constructed using $\Upsilon$, we can see that the collection of vertices landed on for an instance of $\Phi$ is a subset of the collection of vertices landed on for the corresponding instance of $\Upsilon$.  Likewise, since the process $\Upsilon$ is just a (potentially) truncated version of an unbiased random walk on $\mathbb{T}_{3,2}$, it follows from Proposition 2.1 that the non-backtracking random walk on $\mathbb{T}_{3,2}$ that terminates upon hitting the root can be coupled with the unbiased random walk on $\mathbb{T}_{3,2}$ so that the path traversed in the non-backtracking case is a subset of the path traversed in the unbiased case.  From here the entire non-backtracking frog model on $\mathbb{T}_{3,2}$ can be coupled with the original model by starting with the original, and defining a corresponding non-backtracking model for which the path of each activated frog is determined by the instance of $\Phi$ corresponding to the path traversed by the same frog in the original model.  Using this coupling, we find that the path of each frog in the non-backtracking model is a subset of that of its counterpart in the original model.
\end{proof}

\medskip
\noindent
2.2. {\bf Coupling the original and self-similar models.}  The self-similar frog model on $\mathbb{T}_{3,2}$ is obtained by refining the non-backtracking frog model through the addition of the following constraints: (i) Any frog that goes down an edge (i.e. travels away from the root) from an even to an odd level, where that edge has already been traveled along by another frog, is immediately stopped.  If multiple frogs go down a previously untraveled edge simultaneously then all but one are stopped. (ii) The same rule applies for frogs traveling down an edge from an odd to an even level $\textit{except}$ that a node on an even level can have up to two frogs land on it without being stopped (the frog originating at its parent node along with whichever frog activated the frog at its parent node) $\textit{provided}$ that the frog residing at the sibbling of the node in question has yet to be activated (see figure below).

\begin{figure}[H]
    \centering
    \begin{tikzpicture}
    
    \draw[fill=black] (0,0) circle (.4ex);
    \draw[fill=black] (-2,-1.5) circle (.4ex);
    \draw[fill=black] (0,-1.5) circle (.4ex);
    \draw[fill=black] (2,-1.5) circle (.4ex);
    \draw (0,0) -- (-2,-1.5);
    \draw (0,0) -- (0,-1.5);
    \draw (0,0) -- (2,-1.5);
    
    \draw[fill=black] (-2.5,-2.7) circle (.4ex);
    \draw[fill=black] (-1.5,-2.7) circle (.4ex);
    \draw[fill=black] (-.5,-2.7) circle (.4ex);
    \draw[fill=black] (.5,-2.7) circle (.4ex);
    \draw[fill=black] (1.5,-2.7) circle (.4ex);
    \draw[fill=black] (2.5,-2.7) circle (.4ex);
    
    \draw (-1.5,-2.7) -- (-2,-1.5) -- (-2.5,-2.7);
    \draw (-.5,-2.7) -- (0,-1.5) -- (.5,-2.7);
    \draw (1.5,-2.7) -- (2,-1.5) -- (2.5,-2.7);
    
    \draw (0,0) node [anchor=south]{$\varnothing$};
    \draw (2.3,-3.3) node [anchor=south west]{$b$};
    \draw (1.7,-3.3) node [anchor=south east]{$a$};
    
    \draw[->,line width=.7pt] (.3,0) -- (2.1,-1.35);
    \draw[->,line width=.7pt] (2.2,-1.5) -- (2.65,-2.58);
    \draw[->,line width=.7pt] (2.45,-1.38) -- (2.9,-2.46);
    
    \end{tikzpicture}
     \caption{A depiction of a scenario in which a node on an even level (node $b$) has two frogs land on it without being stopped.  Note that in order for such an event to accord with the specifications of the self-similar model, the sibling node (labeled $a$ in the figure) cannot yet have been landed on by an active frog.}
    \label{fig:Model}
    
    \end{figure}
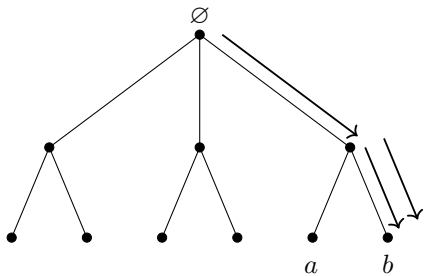
    
Since the frogs in the self-similar model defined above conduct truncated non-backtracking random walks stopped at the root, this yields a natural coupling between the self-similar and non-backtracking frog models in which the frogs in the self-similar model follow paths which are subsets of the paths followed by the corresponding non-backtracking frogs.  Composing this coupling with the coupling described in the proof of Corollary 2.2 then gives a coupling between the self-similar and original frog models that also possesses this property.  Letting $V$ and $Z$ represent the number of frogs that hit the root in the self-similar and original models respectively, we obtain the following proposition.
    
\begin{prop}\label{prop:bd1}  There exists a coupling between the self-similar and original frog models on $\mathbb{T}_{3,2}$ in which $V$ is dominated by $Z$.
\end{prop}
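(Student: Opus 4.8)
The plan is to produce the coupling by amalgamating two couplings that share the non-backtracking frog model as a common factor, and then to verify that the resulting coupling forces $V\le Z$ pointwise, which gives the stochastic domination.

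The first piece is a coupling of the self-similar and non-backtracking frog models. I would set it up by drawing, for each vertex $v$, a single non-backtracking trajectory started at $v$ --- distributed as the process $\Phi$ of Proposition~\ref{prop:bd} --- and by declaring that the frog originating at $v$, once it is activated, follows the vertices of this one trajectory in \emph{both} models. In the non-backtracking model the frog follows the whole trajectory up to its termination at the root; in the self-similar model the same frog follows an initial segment of it, because rules (i) and (ii) only ever halt a frog where it stands, never redirect it. Consequently, for every frog, the set of vertices it visits in the self-similar model is an initial segment of, and in particular a subset of, the set of vertices it visits in the non-backtracking model.

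The second piece is the coupling of the non-backtracking and original models given by Corollary~\ref{cor:jr}, under which each frog's path in the non-backtracking model is a subset of its path in the original model. Since both couplings carry a faithful copy of the non-backtracking model, they may be realized on one probability space, yielding a single coupling of all three models; composing the two inclusions, for every frog the vertices visited in the self-similar model form a subset of the vertices visited in the original model. To pass from this to $V\le Z$ I also need that every frog ever activated in the self-similar model is activated in the original model. This I would get by induction along the order in which frogs become active in the self-similar model: the root frog is active from the start in every model, and any other self-similar frog is set in motion by some already-active self-similar frog landing on its vertex --- a frog that, by the inductive hypothesis, is active in the original model and hence (original frogs are never stopped) eventually traverses its entire walk, which by the vertex inclusions above passes through that vertex. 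Thus every frog visits, in the self-similar model, a subset of the vertices it visits in the original model; in particular a frog reaches the root in the self-similar model only if it does so in the original model. Summing the resulting per-frog inequality over all frogs gives $V\le Z$ almost surely, so $V$ is stochastically dominated by $Z$.

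The step I expect to demand the most care is the induction just described, because the self-similar rules are adaptive: which of several frogs traversing a previously untraveled edge at the same moment is the one allowed to proceed, and whether the two-frog allowance in rule (ii) applies, both depend on the history of the process --- in particular on whether the sibling's frog has already been activated. One must therefore first fix a deterministic tie-breaking convention so that the self-similar dynamics are a well-defined (measurable) function of the assignment of walks to frogs; only then can one legitimately speak of ``the vertices visited by a given frog in the self-similar model'' and run the induction. The remaining ingredients --- that a prefix of a non-backtracking walk stopped at the root visits the root at most as often as the full walk, that Corollary~\ref{cor:jr} does supply the claimed path inclusion, and that the two couplings glue along their shared non-backtracking factor --- are routine.
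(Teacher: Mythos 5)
Your proposal is correct and follows essentially the same route as the paper: couple the self-similar model to the non-backtracking model by truncation of the same $\Phi$-trajectories, compose with the coupling of Corollary~\ref{cor:jr}, and deduce $V\le Z$ from the per-frog path inclusions. The activation induction and the tie-breaking remark are details the paper leaves implicit rather than a different argument.
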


\noindent
Armed with this result, we now find that to prove Theorem 1.1 it suffices to prove recurrence of the self-similar frog model (i.e. that $\mathbb{P}(V=\infty)=1$).

\medskip
\noindent
2.3. {\bf Constructing the operator $\mathcal{A}$.}  Let $f(x):=\mathbb{E}[x^V]$ be the generating function for $V$.  Establishing that $\mathbb{P}(V=\infty)=1$ will involve showing that $f(x)$ is a fixed point for an operator $\mathcal{A}$.  This will be done by introducing operators $\mathcal{L}$ and $\mathcal{H}$.  We can initially think of all three operators as acting on $C^0([0, 1])$ (though we'll restrict our focus to a much smaller class of functions later on).

To start, define the random variable $V_c$ to be the number of frogs (in the self-similar frog model) originating from the sub-tree rooted at $c$ (see Figure 1 on pg. 3), which hit $b$ (conditioned on the frog at $c$ being activated).  Letting $\mathbb{T}_{3,2}(c)$ represent the sub-tree rooted at $c$, we find that if we ignore frogs originating from outside $\left\{b\right\}\cup\mathbb{T}_{3,2}(c)$ which are stopped at $b$ or $c$ after the frog at $c$ has been activated (this can be done since these frogs do not activate any other frogs in $\left\{b\right\}\cup\mathbb{T}_{3,2}(c)$), then the self-similar frog model restricted to $\left\{b\right\}\cup\mathbb{T}_{3,2}(c)$ (following the activation of the frog at $c$) looks exactly like the self-similar frog model on $\mathbb{T}_{3,2}$ following the initial step taken by the frog originating at the root.  From this it then follows that $V$ and $V_c$ have the same distribution, and therefore that $V_c$ also has $f$ as its probability generating function.

Next define the random variable $V_b$ to be the number of frogs originating from the sub-tree rooted at $b$ (see Figure 1 on pg. 3), which hit $a$ (conditioned on the frog at $b$ being activated by exactly one frog from the pair consisting of the frog starting at the root and the frog starting at $a$).  Now letting $l(x)$ represent the probability generating function of $V_b$, we present the lemma below relating the functions $l(x)$ and $f(x)$ via the following operator.

\begin{defn}\label{defn:fm1}  $\mathcal{L}g(x):=\frac{x+3}{4} g(\frac{x+2}{3})^3 +2\cdot\frac{x+2}{4} \Big(g(\frac{x+1}{3})^2-g(\frac{x+2}{3})g(\frac{x+1}{3})^2\Big)+\frac{x+1}{4}\Big(g(\frac{x}{3})-2g(\frac{x+1}{3})g(\frac{x}{3})-g(\frac{x+2}{3})^2 g(\frac{x}{3})+2g(\frac{x+2}{3})g(\frac{x+1}{3})g(\frac{x}{3})\Big)$.
\end{defn}

\begin{lemma}\label{lemma:fmt1} $l(x)=\mathcal{L}f(x)$.
\end{lemma}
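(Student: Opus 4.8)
The plan is to compute the generating function $l(x)=\E[x^{V_b}]$ directly, by conditioning on the frogs' first steps at $b$ and then accounting for the cascade of activations among the three children $c_1,c_2,c_3$ of $b$, using the self-similarity identity $V_c\stackrel{d}{=}V$, so that each activated child sends back toward $b$ a number of frogs distributed as $V$ (generating function $f$).

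First I would fix the local picture on $\{a\}\cup\mathbb{T}_{3,2}(b)$. After the conditioning in the definition of $V_b$, exactly one frog (the \emph{entering} frog) steps $a\to b$, activating the frog at $b$; at the next step the entering frog moves down to a uniformly chosen child of $b$ and the frog at $b$ moves uniformly to one of its four neighbors. I would record: the entering frog activates one child of $b$ and then descends into that child's subtree, never returning to $b$ (so it is irrelevant to the count); and a frog of $\mathbb{T}_{3,2}(b)$ reaches $a$ exactly by stepping $b\to a$ once and never coming back, so $V_b$ equals the number of frogs of $\mathbb{T}_{3,2}(b)$ that ever take the step $b\to a$ --- either the frog born at $b$ on its first step (probability $\tfrac14$), or a frog that has climbed up from some $c_i$ and chooses $a$ among its three equally likely non-backtracking options. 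The core mechanism is that each child $c_i$ that gets activated is, by constraint (i), activated by a single frog, so by the self-similarity observation the number $W_i$ of frogs of $\mathbb{T}_{3,2}(c_i)$ reaching $b$ has generating function $f$, the $W_i$ for distinct activated children are independent, and each returning frog on reaching $b$ picks one of $a,c_j,c_k$ uniformly and independently of everything else --- the only coupling being that picking an as-yet-unactivated sibling activates it.

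Then I would observe that the total count of returning frogs choosing $a$, and the final set of activated children, do not depend on the order in which the returning frogs' choices are revealed; this legitimizes revealing the first ``wave'' of returning frogs (from the initially activated children) first, then recursing on any newly activated children, treating a sibling as already active --- hence generating no further activations --- as soon as it has been, or is certain to be, hit by an earlier wave. With $A:=f(\tfrac{x+2}{3})$, $B:=f(\tfrac{x+1}{3})$, $C:=f(\tfrac{x}{3})$, a single split of the relevant $W_i\sim f$ returning frogs into those choosing $a$ and those choosing a sibling (via inclusion--exclusion on which unactivated siblings get hit) yields two building blocks: a child activated with both siblings already active contributes $A$, and a child activated with exactly one sibling active and one still available contributes $H:=B+(A-B)A$. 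Conditioning on the joint first move of the entering frog and the $b$-frog gives, up to symmetry, three cases --- the $b$-frog steps to $a$ (probability $\tfrac14$, one child initially active, factor $x$), steps to the same child as the entering frog (probability $\tfrac14$, one child initially active), or steps to a different child (probability $\tfrac12$, two children initially active) --- and in the ``one child active'' situation the returning frogs of that child produce, after the same kind of split plus an application of $H$, the contribution $G_1:=C+2(B-C)H+(A-2B+C)A^2$, while in the ``two children active'' situation they produce $G_2:=B^2+(A^2-B^2)A$.

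Finally I would assemble $l(x)=\tfrac14 x G_1(x)+\tfrac14 G_1(x)+\tfrac12 G_2(x)=\tfrac{x+1}{4}G_1(x)+\tfrac12 G_2(x)$ and verify, after simplifying $G_1=A^3+2B^2(1-A)+C(1-A)(1+A-2B)$ and $G_2=A^3+B^2(1-A)$ and using $\tfrac{x+3}{4}=\tfrac{x+1}{4}+\tfrac12$ and $\tfrac{x+2}{2}=\tfrac{x+1}{2}+\tfrac12$, that this is exactly $\mathcal{L}f(x)$. The main obstacle is the middle step: making the cascade bookkeeping airtight --- in particular justifying the order-independence that legitimizes the wave-by-wave computation, and checking that in each sub-case a child is assigned precisely the right number of already-active siblings so that no child's contribution is double-counted --- after which only routine algebra remains.
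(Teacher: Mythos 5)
Your proposal is correct and takes essentially the same route as the paper: you condition on the first moves of the two frogs at $b$ (your three cases are exactly the paper's events $A$, $B_1$, and $B_2\cup B_3$) and resolve the cascade among the three child subtrees via the self-similarity $V_c\stackrel{d}{=}V$ and inclusion--exclusion on which siblings are hit, so that your blocks $H$, $G_1$, $G_2$ reproduce term-by-term the paper's computations for $A_1$--$A_4$ and $B_1$--$B_3$. The order-independence you flag to justify the wave-by-wave accounting is used implicitly in the paper's event decomposition as well, and your assembled expression $\frac{x+1}{4}G_1+\frac{1}{2}G_2$ is precisely $\mathcal{L}f(x)$.
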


\medskip
\noindent
Now the operator $\mathcal{H}$ will be introduced, along with another important lemma.  In the lemma, $h(x)$ will refer to $\mathbb{E}[x^{V'_b}]$, where $V'_b$ is the random variable representing the number of frogs originating from the sub-tree rooted at $b$ which hit $a$ (see Figures 1 and 3), conditioned on vertex $b$ being hit by both the frog that started at the root and the frog that started at $a$.

\begin{defn}\label{defn:fm2}  $\mathcal{H}g(x):=\frac{1}{3}\mathcal{L}g(x)+\frac{x+3}{6}g(\frac{x+2}{3})^3 +\frac{x+2}{6}\Big(g(\frac{x+1}{3})^2 -g(\frac{x+2}{3})g(\frac{x+1}{3})^2\Big)$.
\end{defn}

\begin{lemma}\label{lemma:fmt2} $h(x)=\mathcal{H}f(x)$
\end{lemma}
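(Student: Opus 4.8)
The plan is to mimic the case analysis behind Lemma~\ref{lemma:fmt1}, but to run it starting from the configuration that prevails under the conditioning in the definition of $V'_b$. Since every frog walks without backtracking, the frog started at the root can reach $b$ only along $\varnothing\to a\to b$; hence conditioning on $b$ being hit by both that frog and the frog started at $a$ is the same as conditioning on the root frog taking the steps $\varnothing\to a\to b$ and the $a$-frog taking the step $a\to b$. In particular these two frogs — call them $F_2$ (the root frog) and $F_1$ (the $a$-frog) — arrive at $b$ at the same time, wake the frog $G$ sleeping at $b$, and, since neither has visited $b$'s sibling, rule (ii) lets both of them remain; so one step later three frogs leave $b$: $F_1$ and $F_2$, each moving to one of $b$'s children $c_1,c_2,c_3$ uniformly (they came from $a$), and $G$ moving to one of $a,c_1,c_2,c_3$ uniformly, all independently. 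Apart from the presence of $F_2$, this is exactly the configuration from which the proof of Lemma~\ref{lemma:fmt1} proceeds.

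I would then split on whether $F_2$ lands on the same child of $b$ as $F_1$. This has probability $\tfrac13$; when it happens, rule (i) stops one of $F_1,F_2$ at $b$ and lets the other proceed into that child, and since a frog stopped at $b$ is inert (it never moves again, activates nothing, and leaves the traveled-edge and sibling status of everything else unchanged), exactly one frog — coming from $a$ — proceeds past $b$ just as in the $V_b$ setting, so the entire subsequent evolution is distributed as in that setting. Hence this event contributes $\tfrac13\,l(x)=\tfrac13\,\mathcal{L}f(x)$, which is the first term of $\mathcal{H}$ (Definition~\ref{defn:fm2}). On the complementary event, of probability $\tfrac23$, $F_1$ and $F_2$ activate two distinct children, say $c_{i_1}$ and $c_{i_2}$, and I would condition on $G$'s move: with probability $\tfrac14$, $G\to a$, scoring one hit on $a$ and leaving $c_{i_1},c_{i_2}$ as the only initially activated children; with probability $\tfrac12$, $G$ collides with $F_1$ or $F_2$, is stopped at $b$ and scores nothing, again leaving just $c_{i_1},c_{i_2}$ initially activated; and with probability $\tfrac14$, $G\to c_{i_3}$, so that the third child is activated as well.

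To convert these cases into generating functions I would use three facts, all available from the proof of Lemma~\ref{lemma:fmt1} and the discussion showing that $V_c$ has the same law as $V$: (a) for every child $c_i$ of $b$ (by the same argument as for $c$, and symmetry), the number of frogs originating in an activated $\mathbb{T}_{3,2}(c_i)$ that hit $b$ has probability generating function $f$, even when $c_i$ is activated only later by the cascade, since such an activation is always by a single frog descending $b\to c_i$, which then plays the role of the root frog in a fresh copy of the $V_c$ setup; (b) for one such subtree, if $A$ counts the hitting frogs that afterward step to $a$ and $B$ those that step to a fixed other child $c_j$, then each hitting frog goes to $a$, to $c_j$, or to the third child with probability $\tfrac13$ each, so $\mathbb{E}[x^{A}]=f(\tfrac{x+2}{3})$ and $\mathbb{E}[x^{A}\mathbf{1}(B=0)]=f(\tfrac{x+1}{3})$; (c) the (at most three) activated subtrees, together with any subtree later activated at $c_{i_3}$, are mutually independent and feed no frogs back into one another, because an up-coming frog that tries to descend an already traveled edge is stopped at $b$. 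In the $G\to c_{i_3}$ case all three children are directly activated, so the only way to reach $a$ is a hitting frog stepping there, giving generating function $f(\tfrac{x+2}{3})^3$; in the $G\to a$ and $G$-collides cases $c_{i_3}$ gets activated exactly when at least one frog coming up from $\mathbb{T}_{3,2}(c_{i_1})$ or $\mathbb{T}_{3,2}(c_{i_2})$ steps down to it, and writing $\mathbf{1}(c_{i_3}\text{ activated})=1-\mathbf{1}(B_1=0)\mathbf{1}(B_2=0)$ and applying (b)--(c) the generating function for hits on $a$ on that event comes out to $f(\tfrac{x+1}{3})^2+f(\tfrac{x+2}{3})^3-f(\tfrac{x+2}{3})f(\tfrac{x+1}{3})^2$, multiplied by $x$ in the $G\to a$ case to record $G$'s own hit. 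Weighting the three sub-cases by $\tfrac23\cdot\tfrac14$, $\tfrac23\cdot\tfrac12$, $\tfrac23\cdot\tfrac14$, adding $\tfrac13\mathcal{L}f(x)$, and collecting terms produces exactly the expression $\mathcal{H}f(x)$ of Definition~\ref{defn:fm2}.

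The part I expect to be most delicate is (c): justifying the independence and no-feedback claims and, above all, that a child activated only after a cascade still launches a fresh independent copy of the $V_c$ process, together with the bookkeeping of which frogs end up stopped at $b$ and why such frogs are genuinely inert. However, this is the same machinery already needed for Lemma~\ref{lemma:fmt1}, so the proof can largely quote it; the genuinely new content is the observation that the two conditioned frogs arrive at $b$ simultaneously and that a collision of $F_2$ with $F_1$ reduces the situation back to the $V_b$ case, which is what splits off the $\tfrac13\mathcal{L}f$ term.
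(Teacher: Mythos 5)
Your proposal is correct and follows essentially the same route as the paper: you split off the probability-$\tfrac13$ event that the two frogs arriving from $a$ choose the same child of $b$ (reducing to the $V_b$ setting and giving the $\tfrac13\mathcal{L}f$ term), and then condition on the direction of the frog woken at $b$, exactly matching the paper's events $A_0$ and $C_1,C_2,C_3$. The only cosmetic difference is that you recompute the ``two activated subtrees'' generating function $f(\tfrac{x+1}{3})^2+f(\tfrac{x+2}{3})^3-f(\tfrac{x+2}{3})f(\tfrac{x+1}{3})^2$ directly, where the paper quotes it as $\mathbb{E}[x^{V_b}\mid B_2\cup B_3]$ from the proof of Lemma~\ref{lemma:fmt1}; the resulting bookkeeping and final expression agree with Definition~\ref{defn:fm2}.
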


\medskip
\noindent
Next we define $\mathcal{A}$ and state the main result of this section, following which are the proofs of our two lemmas.

\begin{defn}\label{defn:fm3}  $\mathcal{A}g(x):=\frac{x}{3}\mathcal{L}[g](\frac{x}{2})+\frac{x+1}{3}\Big(\mathcal{L}[g](\frac{x+1}{2})\Big)^2-\frac{x}{3}\mathcal{L}[g](\frac{x+1}{2})\mathcal{L}[g](\frac{x}{2})+\frac{1}{3}\mathcal{H}[g](\frac{x}{2})+\frac{1}{3}\mathcal{L}[g](\frac{x+1}{2})\mathcal{H}[g](\frac{x+1}{2})-\frac{1}{3}\mathcal{L}[g](\frac{x+1}{2})\mathcal{H}[g](\frac{x}{2})$.
\end{defn}

\medskip
\noindent
$Remark\ 1.$ Note the brackets in expressions of the form $\mathcal{L}[g](\frac{x}{2})$ above, which are there to indicate that the expression is to be interpreted as the value of the function $\mathcal{L}g$ at $\frac{x}{2}$.

\begin{theorem}\label{theorem:neat2} $\mathcal{A}f=f$.
\end{theorem}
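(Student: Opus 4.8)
The plan is to decompose the behavior of the self-similar frog model on $\mathbb{T}_{3,2}$ according to the first two steps taken by the frog originating at the root, and to recognize, via the self-similarity already established in Section 2.3, that every sub-population of frogs entering one of the grand-child sub-trees contributes an independent copy of $V$ (with generating function $f$), an independent copy of $V_b$ (with generating function $l=\mathcal{L}f$), or an independent copy of $V'_b$ (with generating function $h=\mathcal{H}f$), depending on how many frogs enter that sub-tree and whether the sibling frog has been activated. Concretely, the root has three children; the frog at the root moves to one of them, say $v$, which lies on level $1$ and has two children on level $2$. I would condition on how many of the (at most two) frogs that can legitimately occupy $v$ actually reach $v$ and then on how the activation cascade proceeds into the two level-$2$ sub-trees rooted at $v$'s children. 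Each returning frog to the root is a frog that first returns to $v$ from one of these level-$2$ sub-trees (or is the root frog's own return, which cannot happen since it is non-backtracking and stopped at the root — so in fact $V$ counts precisely the frogs coming back up through $v$). Thus $x^V$ factors, conditionally, as a product of independent terms each distributed like $x^{V_b}$ or $x^{V'_b}$ coming out of the level-$2$ nodes, composed one more level down with the $\mathcal{L}$- and $\mathcal{H}$-level recursions that were used to define $l$ and $h$ in Lemmas 2.8 and 2.10.

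The bookkeeping has two layers. First, at node $v$ (odd level): under the self-similar rules, $v$ can be hit by the root frog alone, or by the root frog together with one additional frog (the one that activated the root frog — but the root frog is the initial frog, so this sub-case is governed by the "$\frac{x}{2}$ vs $\frac{x+1}{2}$" split visible in Definition 2.11, reflecting whether zero or one "extra" frogs accompany the descent). Second, at each child of $v$ (even level): by rule (ii) such a node may receive one or two frogs without a stop, the two-frog case being allowed only when its sibling's frog is not yet activated — precisely the dichotomy encoded by $l=\mathcal{L}f$ (one activating frog) versus $h=\mathcal{H}f$ (two activating frogs). I would enumerate the finitely many activation patterns for the ordered pair of level-$2$ sub-trees: neither activated, exactly one activated (by one or by two frogs, with the sibling-not-yet-activated clause determining which generating function applies), or both activated. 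Summing $x$ raised to the total number of up-returns over these cases, weighting by the corresponding combinatorial probabilities ($\tfrac13$'s from the level-$1$ branching of the original tree being reproduced in the non-backtracking walk's transition probabilities, as computed in Proposition 2.1), and using independence of the sub-trees, produces exactly the right-hand side of Definition 2.11 with $g=f$. Since the left-hand side is by construction $\mathbb{E}[x^V]=f(x)$, this yields $\mathcal{A}f=f$.

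The main obstacle I anticipate is getting the conditioning structure at node $v$ exactly right so that the $\mathcal{H}$ terms in $\mathcal{A}$ are accounted for without double-counting. The subtlety is that whether a child of $v$ is hit by one or two frogs depends jointly on (a) how many frogs reached $v$ and (b) the order in which the two children of $v$ get activated, because the "sibling not yet activated" proviso in rule (ii) is genuinely sequential. One must check that the event "the second child of $v$ is hit by two frogs" has already been excluded (its sibling is by then activated), so that the second child always contributes an $\mathcal{L}f$ factor while the first may contribute either $\mathcal{L}f$ or $\mathcal{H}f$ — and that the asymmetric coefficients $\tfrac{x}{3}$, $\tfrac{x+1}{3}$, $\tfrac13$ in Definition 2.11 correctly package the probabilities that $0$, $1$, or $2$ of these frogs survive to return. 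I would handle this by fixing an explicit activation order, writing out the (small) tree of cases in a single aligned computation, and then verifying termwise that the generating-function identity collapses to $\mathcal{A}f$. Everything else — substituting $\tfrac{x}{2}$, $\tfrac{x+1}{2}$ for the level-$1$ rescaling and $\tfrac{x}{3},\tfrac{x+1}{3},\tfrac{x+2}{3}$ for the level-$2$ rescaling — is the same mechanical change of variables already carried out in the proofs of Lemmas 2.8 and 2.10, so I would cite those rather than repeat the algebra.
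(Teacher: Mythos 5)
Your proposal is correct in substance and follows essentially the same route as the paper: condition on the first steps of the root frog and of the frog at the first visited vertex $a$ (the paper's events $D_1$, $D_2$, $D_3$, each of probability $\tfrac{1}{3}$), let each level-two subtree contribute a factor $\mathcal{L}f$ or $\mathcal{H}f$ according to whether it is activated by a single frog or by the distinguished pair, and thin the frogs that return to $a$, which is what produces the arguments $\tfrac{x}{2}$ and $\tfrac{x+1}{2}$. The one gloss to fix is that this $\tfrac{x}{2}$ versus $\tfrac{x+1}{2}$ split does not record how many frogs accompany the descent to $a$ (the root frog always arrives there alone); it records whether frogs returning to $a$ must all go to the root or may instead be absorbed by (or newly activate) the sibling child's subtree, and the coefficients $\tfrac{x}{3}$, $\tfrac{x+1}{3}$, $\tfrac{1}{3}$ come from the first step of the frog at $a$ itself, whose direct return to the root supplies the factor $x$ -- exactly the case analysis carried out in the paper.
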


\begin{proof}[Proof of Lemma \ref{lemma:fmt1}] Observe the diagram depicting the relevant portion of $\mathbb{T}_{3,2}$ below.

\medskip
\begin{figure}[H]
    \centering
    \begin{tikzpicture}
    
     \draw[fill=black] (0,0) circle (.4ex);
    \draw[fill=black] (1.2,-1.5) circle (.4ex);
    \draw (0,0) -- (1.2,-1.5);
    
    \draw[fill=black] (.6,-3) circle (.4ex);
    \draw[fill=black] (1.2,-3) circle (.4ex);
    \draw[fill=black] (1.8,-3) circle (.4ex);
    \draw (1.2,-1.5) -- (.6,-3);
    \draw (1.2,-1.5) -- (1.2,-3);
    \draw (1.2,-1.5) -- (1.8,-3);
    
    \draw (0,.1) node [anchor=south]{$a$};
    \draw (1.35,-1.5) node [anchor=west]{$b$};
    \draw (.6,-3.07) node [anchor=north]{$c''$};
    \draw (1.2,-3.07) node [anchor=north]{$c'$};
    \draw (1.8,-3.2) node [anchor=north]{$c$};

    \end{tikzpicture}
     \caption{}
    \label{fig:Model}
    
    \end{figure}
    
\noindent
Since we are conditioning on the frog at $b$ being activated by either the frog from $a$ or the frog from the root (but not both), it follows from property (ii) of the self-similar model that no additional frogs can enter the sub-tree rooted at $b$ (meaning any such frogs are stopped at $b$).  Hence, once the frog beginning at $b$ is activated, we are starting with two active frogs there where one of them (we'll call it $\# 1$) can go in any of the four available directions and the other (call it $\#2$) must travel away from vertex $a$.  Letting A represent the event that $\# 1$ goes to $a$, $l(x)$ can then be expressed as $l(x)=\mathbb{E}[x^{V_b}]=\mathbb{E}[x^{V_b};A]+\mathbb{E}[x^{V_b};A^c]$.

\medskip
\begin{figure}[H]
    \centering
    \begin{tikzpicture}
    
       \draw[fill=black] (1.5,0) circle (.4ex);
    \draw[fill=black] (2.7,-1.5) circle (.4ex);
    \draw (1.5,0) -- (2.7,-1.5);
    
    \draw[fill=black] (2.1,-3) circle (.4ex);
    \draw[fill=black] (2.7,-3) circle (.4ex);
    \draw[fill=black] (3.3,-3) circle (.4ex);
    \draw (2.7,-1.5) -- (2.1,-3);
    \draw (2.7,-1.5) -- (2.7,-3);
    \draw (2.7,-1.5) -- (3.3,-3);
    
    \draw (1.5,.1) node [anchor=south]{$a$};
    \draw (2.95,-1.5) node [anchor=west]{$b$};
    \draw (2.1,-3.07) node [anchor=north]{$c''$};
    \draw (2.7,-3.07) node [anchor=north]{$c'$};
    \draw (3.3,-3.2) node [anchor=north]{$c$};
    
    \draw[->,line width=.7pt] (2.45,-1.5) -- (1.33,-.1);
    \draw[->,line width=.7pt] (2.9,-1.5) -- (3.48,-2.95);
    \draw[->,line width=.7pt] (3.15,-3) -- (2.81,-2.15);
    \draw[->,line width=.7pt] (2.55,-2.15) -- (2.55,-2.95);
    \draw[->,line width=.7pt] (2.44,-1.65) -- (1.92,-2.95);
    
     \draw[fill=black] (-2,0) circle (.4ex);
    \draw[fill=black] (-.8,-1.5) circle (.4ex);
    \draw (-2,0) -- (-.8,-1.5);
    
    \draw[fill=black] (-1.4,-3) circle (.4ex);
    \draw[fill=black] (-.8,-3) circle (.4ex);
    \draw[fill=black] (-.2,-3) circle (.4ex);
    \draw (-.8,-1.5) -- (-1.4,-3);
    \draw (-.8,-1.5) -- (-.8,-3);
    \draw (-.8,-1.5) -- (-.2,-3);
    
    \draw (-2,.1) node [anchor=south]{$a$};
    \draw (-.55,-1.5) node [anchor=west]{$b$};
    \draw (-1.4,-3.07) node [anchor=north]{$c''$};
    \draw (-.8,-3.07) node [anchor=north]{$c'$};
    \draw (-.2,-3.2) node [anchor=north]{$c$};
    
    \draw[->,line width=.7pt] (-1.05,-1.5) -- (-2.12,-.1);
    \draw[->,line width=.7pt] (-.6,-1.5) -- (-.02,-2.95);
    \draw[->,line width=.7pt] (-.35,-3) -- (-.69,-2.15);
    \draw[->,line width=.7pt] (-.65,-2.5) -- (-.65,-2.95);
    \draw[->,line width=.7pt] (-.95,-3) -- (-.95,-2.25);
    \draw[->,line width=.7pt] (-1.06,-1.65) -- (-1.58,-2.95);
    
     \draw[fill=black] (-5.5,0) circle (.4ex);
    \draw[fill=black] (-4.3,-1.5) circle (.4ex);
    \draw (-5.5,0) -- (-4.3,-1.5);
    
    \draw[fill=black] (-4.9,-3) circle (.4ex);
    \draw[fill=black] (-4.3,-3) circle (.4ex);
    \draw[fill=black] (-3.7,-3) circle (.4ex);
    \draw (-4.3,-1.5) -- (-4.9,-3);
    \draw (-4.3,-1.5) -- (-4.3,-3);
    \draw (-4.3,-1.5) -- (-3.7,-3);
    
    \draw (-5.5,.1) node [anchor=south]{$a$};
    \draw (-4.05,-1.5) node [anchor=west]{$b$};
    \draw (-4.9,-3.07) node [anchor=north]{$c''$};
    \draw (-4.3,-3.07) node [anchor=north]{$c'$};
    \draw (-3.7,-3.2) node [anchor=north]{$c$};
    
    \draw[->,line width=.7pt] (-4.55,-1.5) -- (-5.67,-.1);
    \draw[->,line width=.7pt] (-4.1,-1.5) -- (-3.52,-2.95);
    \draw[->,line width=.7pt] (-3.85,-3) -- (-4.19,-2.15);
    \draw[->,line width=.7pt] (-4.45,-2.15) -- (-4.45,-2.95);
    
       \draw[fill=black] (-9,0) circle (.4ex);
    \draw[fill=black] (-7.8,-1.5) circle (.4ex);
    \draw (-9,0) -- (-7.8,-1.5);
    
    \draw[fill=black] (-8.4,-3) circle (.4ex);
    \draw[fill=black] (-7.8,-3) circle (.4ex);
    \draw[fill=black] (-7.2,-3) circle (.4ex);
    \draw (-7.8,-1.5) -- (-8.4,-3);
    \draw (-7.8,-1.5) -- (-7.8,-3);
    \draw (-7.8,-1.5) -- (-7.2,-3);
    
    \draw (-9,.1) node [anchor=south]{$a$};
    \draw (-7.55,-1.5) node [anchor=west]{$b$};
    \draw (-8.4,-3.07) node [anchor=north]{$c''$};
    \draw (-7.8,-3.07) node [anchor=north]{$c'$};
    \draw (-7.2,-3.2) node [anchor=north]{$c$};
    
    \draw[->,line width=.7pt] (-8.05,-1.5) -- (-9.17,-.1);
    \draw[->,line width=.7pt] (-7.6,-1.5) -- (-7.02,-2.95);

    \end{tikzpicture}
     \caption{Illustrations representing the four events (from left to right) $A_1,\ A_2,\ A_3,$ and $A_4$.}
    \label{fig:Model}
    
    \end{figure}
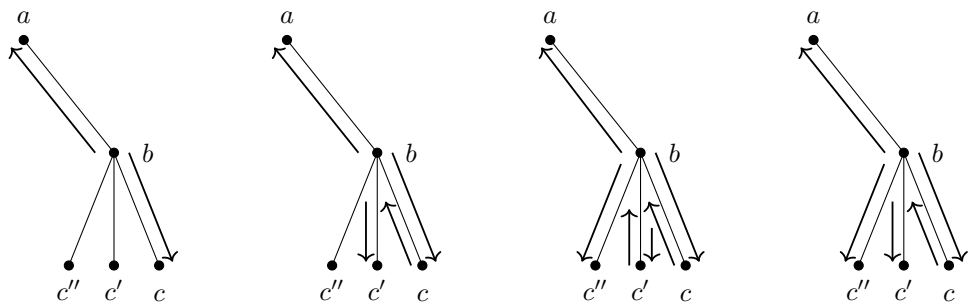
    
\noindent
Now $A$ is split up into the four separate events $A_1,\ A_2,\ A_3,$ and $A_4$ (see Figure 4 above) as follows: $A_1$ represents having the sub-tree activated by $\# 2$ fail to activate either of its two sibling sub-trees (represented by $c'$ and $c''$ in leftmost figure); $A_2$ represents the sub-tree activated by $\#2$ activating exactly one of its sibling sub-trees, which itself fails to activate the other sibling; $A_3$ represents the sub-tree activated by $\# 2$ activating exactly one of its sibling sub-trees, which itself activates the other sibling; and $A_4$ represents the sub-tree activated by $\#2$ activating both of its sibling sub-trees.
    
The next step is to evaluate $\mathbb{E}[x^{V_b};A_i]$ for each $i$ as follows:
\begin{equation}\label{A_1 comp}
\mathbb{E}[x^{V_b};A_1]=\frac{x}{4}\sum_{k=0}^{\infty}\mathbb{P}(V_c=k)\Big{(}\frac{1}{3}\Big{)}^k x^k=\frac{x}{4}f\Big{(}\frac{x}{3}\Big{)}
\end{equation}
\begin{align}\label{A_2 comp}\mathbb{E}[x^{V_b};A_2]&=\frac{x}{4}\sum_{k=1}^{\infty}\mathbb{P}(V_c=k)\sum_{j=0}^{k-1}\Big{(}\frac{1}{3}\Big{)}^j \Big{(}\frac{2}{3}\Big{)}^{k-j}\binom{k}{j}x^j\cdot 2\cdot\Big{(}\frac{1}{2}\Big{)}^{k-j}\sum_{i=0}^{\infty}\mathbb{P}(V_c=i)\Big{(}\frac{1}{3}\Big{)}^i\sum_{l=0}^i\binom{i}{l}x^l\\
&=\frac{x}{2}\sum_{k=1}^{\infty}\mathbb{P}(V_c=k)\sum_{j=0}^{k-1}\Big{(}\frac{x}{3}\Big{)}^j\Big{(}\frac{1}{3}\Big{)}^{k-j}\binom{k}{j}\sum_{i=0}^{\infty}\mathbb{P}(V_c=i)\Big{(}\frac{x+1}{3}\Big{)}^i \nonumber\\
&=\frac{x}{2}f\Big{(}\frac{x+1}{3}\Big{)}\sum_{k=1}^{\infty}\mathbb{P}(V_c=k)\sum_{j=0}^{k-1}\Big{(}\frac{x}{3}\Big{)}^j\Big{(}\frac{1}{3}\Big{)}^{k-j}\binom{k}{j}\nonumber\\
&=\frac{x}{2}f\Big{(}\frac{x+1}{3}\Big{)}\sum_{k=1}^{\infty}\mathbb{P}(V_c=k)\Big{(}\Big{(}\frac{x+1}{3}\Big{)}^k-\Big{(}\frac{x}{3}\Big{)}^k\Big{)}=\frac{x}{2}f\Big{(}\frac{x+1}{3}\Big{)}\Big{(}f\Big{(}\frac{x+1}{3}\Big{)}-f\Big{(}\frac{x}{3}\Big{)}\Big{)}\nonumber
\end{align}

\begin{align}\label{A_3 comp}
\mathbb{E}[x^{V_b};A_3]&=\frac{x}{4}\sum_{k=1}^{\infty}\mathbb{P}(V_c=k)\sum_{j=0}^{k-1}\Big{(}\frac{1}{3}\Big{)}^j\Big{(}\frac{2}{3}\Big{)}^{k-j}\binom{k}{j}x^j\cdot 2\cdot\Big{(}\frac{1}{2}\Big{)}^{k-j}\cdot\sum_{i=1}^{\infty}\mathbb{P}(V_c=i)\\
&\qquad\sum_{l=0}^{i-1}\Big{(}\frac{1}{3}\Big{)}^l\Big{(}\frac{2}{3}\Big{)}^{i-l}\binom{i}{l}x^l\Big{(}1-\Big{(}\frac{1}{2}\Big{)}^{i-l}\Big{)}\sum_{m=0}^{\infty}\mathbb{P}(V_c=m)\sum_{n=0}^m\Big{(}\frac{1}{3}\Big{)}^n\Big{(}\frac{2}{3}\Big{)}^{m-n}\binom{m}{n}x^n\nonumber\\
&=\frac{x}{2}\sum_{k=1}^{\infty}\mathbb{P}(V_c=k)\sum_{j=0}^{k-1}\Big{(}\frac{x}{3}\Big{)}^j\Big{(}\frac{1}{3}\Big{)}^{k-j}\binom{k}{j}\sum_{i=1}^{\infty}\mathbb{P}(V_c=i)\sum_{l=0}^{i-1}\Big{(}\frac{x}{3}\Big{)}^l\Big{(}\frac{2}{3}\Big{)}^{i-l}\binom{i}{l}\Big{(}1-\Big{(}\frac{1}{2}\Big{)}^{i-l}\Big{)}f\Big{(}\frac{x+2}{3}\Big{)}\nonumber\\
&=\frac{x}{2}f\Big{(}\frac{x+2}{3}\Big{)}\sum_{k=1}^{\infty}\mathbb{P}(V_c=k)\Big{(}\Big{(}\frac{x+1}{3}\Big{)}^k-\Big{(}\frac{x}{3}\Big{)}^k\Big{)}\sum_{i=1}^{\infty}\mathbb{P}(V_c=i)\Big{(}\Big{(}\frac{x+2}{3}\Big{)}^i-\Big{(}\frac{x+1}{3}\Big{)}^i\Big{)}\nonumber\\
&=\frac{x}{2}f\Big{(}\frac{x+2}{3}\Big{)}\Big{(}f\Big{(}\frac{x+1}{3}\Big{)}-f\Big{(}\frac{x}{3}\Big{)}\Big{)}\Big{(}f\Big{(}\frac{x+2}{3}\Big{)}-f\Big{(}\frac{x+1}{3}\Big{)}\Big{)}\nonumber
\end{align}
\medskip
\begin{align}\label{A_4 comp}
\mathbb{E}[x^{V_b};A_4]&=\frac{x}{4}\sum_{k=2}^{\infty}\mathbb{P}(V_c=k)\sum_{j=0}^{k-2}\Big{(}\frac{1}{3}\Big{)}^j\Big{(}\frac{2}{3}\Big{)}^{k-j}\binom{k}{j}x^j\Big{(}1-2\Big{(}\frac{1}{2}\Big{)}^{k-j}\Big{)}\\
&\cdot\Big{(}\sum_{i=0}^{\infty}\mathbb{P}(V_c=i)\sum_{l=0}^i\Big{(}\frac{1}{3}\Big{)}^l\Big{(}\frac{2}{3}\Big{)}^{i-l}\binom{i}{l}x^l\Big{)}^2\nonumber\\
&=\frac{x}{4}\sum_{k=2}^{\infty}\mathbb{P}(V_c=k)\Big{(}\Big{(}\frac{x+2}{3}\Big{)}^k-2\Big{(}\frac{x+1}{3}\Big{)}^k+\Big{(}\frac{x}{3}\Big{)}^k\Big{)}\cdot\Big{(}\sum_{i=0}^{\infty}\mathbb{P}(V_c=i)\Big{(}\frac{x+2}{3}\Big{)}^i\Big{)}^2\nonumber\\
&=\frac{x}{4}f\Big{(}\frac{x+2}{3}\Big{)}^2\Big{(}f\Big{(}\frac{x+2}{3}\Big{)}-2f\Big{(}\frac{x+1}{3}\Big{)}+f\Big{(}\frac{x}{3}\Big{)}\Big{)}\nonumber
\end{align}

\medskip
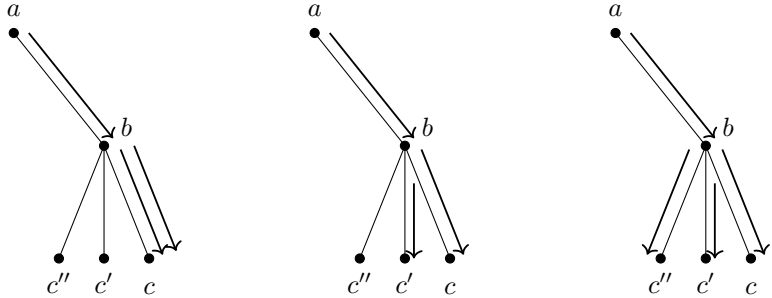
\begin{figure}[H]
    \centering
    \begin{tikzpicture}
    
       \draw[fill=black] (-3,0) circle (.4ex);
    \draw[fill=black] (-1.8,-1.5) circle (.4ex);
    \draw (-3,0) -- (-1.8,-1.5);
    
    \draw[fill=black] (-2.4,-3) circle (.4ex);
    \draw[fill=black] (-1.8,-3) circle (.4ex);
    \draw[fill=black] (-1.2,-3) circle (.4ex);
    \draw (-1.8,-1.5) -- (-2.4,-3);
    \draw (-1.8,-1.5) -- (-1.8,-3);
    \draw (-1.8,-1.5) -- (-1.2,-3);
    
    \draw (-3,.1) node [anchor=south]{$a$};
    \draw (-1.7,-1.25) node [anchor=west]{$b$};
    \draw (-2.4,-3.07) node [anchor=north]{$c''$};
    \draw (-1.8,-3.07) node [anchor=north]{$c'$};
    \draw (-1.2,-3.2) node [anchor=north]{$c$};
    
    \draw[->,line width=.7pt] (-1.58,-1.55) -- (-1.02,-2.95);
    \draw[->,line width=.7pt] (-1.4,-1.5) -- (-.84,-2.9);
    \draw[->,line width=.7pt] (-2.8,0) -- (-1.68,-1.4);
    
      \draw[fill=black] (1,0) circle (.4ex);
    \draw[fill=black] (2.2,-1.5) circle (.4ex);
    \draw (1,0) -- (2.2,-1.5);
    
    \draw[fill=black] (1.6,-3) circle (.4ex);
    \draw[fill=black] (2.2,-3) circle (.4ex);
    \draw[fill=black] (2.8,-3) circle (.4ex);
    \draw (2.2,-1.5) -- (1.6,-3);
    \draw (2.2,-1.5) -- (2.2,-3);
    \draw (2.2,-1.5) -- (2.8,-3);
    
    \draw (1,.1) node [anchor=south]{$a$};
    \draw (2.3,-1.25) node [anchor=west]{$b$};
    \draw (1.6,-3.07) node [anchor=north]{$c''$};
    \draw (2.2,-3.07) node [anchor=north]{$c'$};
    \draw (2.8,-3.2) node [anchor=north]{$c$};
    
    \draw[->,line width=.7pt] (2.42,-1.55) -- (2.98,-2.95);
    \draw[->,line width=.7pt] (1.2,0) -- (2.32,-1.4);
    \draw[->,line width=.7pt] (2.32,-2) -- (2.32,-3);
    
     \draw[fill=black] (5,0) circle (.4ex);
    \draw[fill=black] (6.2,-1.5) circle (.4ex);
    \draw (5,0) -- (6.2,-1.5);
    
    \draw[fill=black] (5.6,-3) circle (.4ex);
    \draw[fill=black] (6.2,-3) circle (.4ex);
    \draw[fill=black] (6.8,-3) circle (.4ex);
    \draw (6.2,-1.5) -- (5.6,-3);
    \draw (6.2,-1.5) -- (6.2,-3);
    \draw (6.2,-1.5) -- (6.8,-3);
    
    \draw (5,.1) node [anchor=south]{$a$};
    \draw (6.3,-1.25) node [anchor=west]{$b$};
    \draw (5.6,-3.07) node [anchor=north]{$c''$};
    \draw (6.2,-3.07) node [anchor=north]{$c'$};
    \draw (6.8,-3.2) node [anchor=north]{$c$};
    
    \draw[->,line width=.7pt] (5.98,-1.55) -- (5.42,-2.95);
    \draw[->,line width=.7pt] (6.42,-1.55) -- (6.98,-2.95);
    \draw[->,line width=.7pt] (5.2,0) -- (6.32,-1.4);
    \draw[->,line width=.7pt] (6.32,-2) -- (6.32,-3);
    
    \end{tikzpicture}
     \caption{Illustrations representing the three events (from left to right) $B_1,\ B_2$, and $B_3$.}
    \label{fig:Model}
    
    \end{figure}

Having obtained expressions for the $A_i$'s, we now split up $A^c$ into the three separate events $B_1,\ B_2$, and $B_3$ (see Figure 5 above) in the following way: $B_1$ represents having $\# 1$ and $\# 2$ activate the same sub-tree; $B_2$ represents $\# 1$ and $\# 2$ activating different sub-trees (represented by $c$ and $c'$ in middle figure above), neither of which activates the third sub-tree; and $B_3$ represents $\# 1$ and $\# 2$ activating different sub-trees, which then activate the third sub-tree.  Next the expression $\mathbb{E}[x^{V_b};B_i]$ is evaluated for each i as follows:
\begin{equation}\label{B_1 comp}
\mathbb{E}[x^{V_b};B_1]=\frac{1}{x}\mathbb{E}[x^{V_b};A]
\end{equation} (this follows from the fact that $\mathbb{P}(A)=\mathbb{P}(B_1)$ and $\big{(}V_b -1\big{)}|A=V_b|B_1$)
\begin{equation}\label{B_2 comp}
\mathbb{E}[x^{V_b};B_2]=\frac{1}{2}\Big{(}\sum_{k=0}^{\infty}\mathbb{P}(V_c=k)\sum_{j=0}^k\Big{(}\frac{1}{3}\Big{)}^k\binom{k}{j}x^j\Big{)}^2=\frac{1}{2}\Big{(}\sum_{k=0}^{\infty}\mathbb{P}(V_c=k)\Big{(}\frac{x+1}{3}\Big{)}^k\Big{)}^2=\frac{1}{2}f\Big{(}\frac{x+1}{3}\Big{)}^2
\end{equation}
\medskip
\begin{align}\label{B_3 comp}
\mathbb{E}[x^{V_b};B_3]&=\frac{1}{2}\sum_{k_1+k_2\geq 1}\mathbb{P}(V_c=k_1)\mathbb{P}(V_c=k_2)\sum_{j=0}^{k_1+k_2 -1}\Big{(}\frac{1}{3}\Big{)}^j\Big{(}\frac{2}{3}\Big{)}^{k_1+k_2 -j}x^j\binom{k_1+k_2}{j}\Big{(}1-\Big{(}\frac{1}{2}\Big{)}^{k_1+k_2-j}\Big{)}\\
&\cdot\sum_{i=0}^{\infty}\mathbb{P}(V_c=i)\sum_{l=0}^i\Big{(}\frac{1}{3}\Big{)}^l\Big{(}\frac{2}{3}\Big{)}^{i-l}\binom{i}{l}x^l\nonumber\\
&=\frac{1}{2}\sum_{k_1+k_2\geq 1}\mathbb{P}(V_c=k_1)\mathbb{P}(V_c=k_2)\Big{(}\Big{(}\frac{x+2}{3}\Big{)}^{k_1+k_2}-\Big{(}\frac{x+1}{3}\Big{)}^{k_1+k_2}\Big{)}\sum_{i=0}^{\infty}\mathbb{P}(V_c=i)\Big{(}\frac{x+2}{3}\Big{)}^i\nonumber\\
&=\frac{1}{2}f\Big{(}\frac{x+2}{3}\Big{)}\Big{(}f\Big{(}\frac{x+2}{3}\Big{)}^2-f\Big{(}\frac{x+1}{3}\Big{)}^2\Big{)}\nonumber
\end{align}

\medskip
Using the calculations from \eqref{A_1 comp}-\eqref{B_3 comp} we now find that
\begin{equation}\label{V_b comp} l(x)=\mathbb{E}[x^{V_b}]=\sum_{i=1}^4\mathbb{E}[x^{V_b};A_i]+\sum_{i=1}^3\mathbb{E}[x^{V_b};B_i]
\end{equation}
$$=\Big{(}1+\frac{1}{x}\Big{)}\Big{(}\ \frac{x}{4}f\Big{(}\frac{x}{3}\Big{)}+\frac{x}{2}f\Big{(}\frac{x+1}{3}\Big{)}^2-\frac{x}{2}f\Big{(}\frac{x+1}{3}\Big{)}f\Big{(}\frac{x}{3}\Big{)}+\frac{x}{2}f\Big{(}\frac{x+2}{3}\Big{)}^2 f\Big(\frac{x+1}{3}\Big{)}-\frac{x}{2}f\Big{(}\frac{x+2}{3}\Big{)}^2 f\Big(\frac{x}{3}\Big{)}$$
$$-\frac{x}{2}f\Big{(}\frac{x+2}{3}\Big{)}f\Big(\frac{x+1}{3}\Big{)}^2+\frac{x}{2}f\Big{(}\frac{x+2}{3}\Big{)}f\Big(\frac{x+1}{3}\Big{)}f\Big{(}\frac{x}{3}\Big{)}+\frac{x}{4}f\Big{(}\frac{x+2}{3}\Big{)}^3-\frac{x}{2}f\Big{(}\frac{x+2}{3}\Big{)}^2 f\Big(\frac{x+1}{3}\Big{)}$$
$$+\frac{x}{4}f\Big{(}\frac{x+2}{3}\Big{)}^2 f\Big(\frac{x}{3}\Big{)}\ \Big{)}\ +\ \frac{1}{2}f\Big{(}\frac{x+1}{3}\Big{)}^2+\frac{1}{2}f\Big{(}\frac{x+2}{3}\Big{)}^3-\frac{1}{2}f\Big{(}\frac{x+2}{3}\Big{)}f\Big{(}\frac{x+1}{3}\Big{)}^2$$

$$=\frac{x+3}{4}f\Big{(}\frac{x+2}{3}\Big{)}^3+2\cdot\frac{x+2}{4}\Big{(}f\Big{(}\frac{x+1}{3}\Big{)}^2-f\Big{(}\frac{x+2}{3}\Big{)}f\Big{(}\frac{x+1}{3}\Big{)}^2\Big{)}+\frac{x+1}{4}\Big{(}f\Big{(}\frac{x}{3}\Big{)}$$
$$-2 f\Big{(}\frac{x+1}{3}\Big{)}f\Big{(}\frac{x}{3}\Big{)}-f\Big{(}\frac{x+2}{3}\Big{)}^2 f\Big{(}\frac{x}{3}\Big{)}+2f\Big{(}\frac{x+2}{3}\Big{)}f\Big{(}\frac{x+1}{3}\Big{)}f\Big{(}\frac{x}{3}\Big{)}\Big{)}=\mathcal{L}f(x)$$

\bigskip
\noindent
Hence, the proof of Lemma $\ref{lemma:fmt1}$ is complete.
\end{proof}

\begin{proof}[Proof of Lemma \ref{lemma:fmt2}] The scenario under consideration (see Figure 3 again) begins with three active frogs at vertex $b$, where one (call it $\# 1$) is free to go in any of the four available directions, and the other two (call them the $\# 2$ frogs) can go in any of the three directions away from the root.  Letting $A_0$ represent the event that the two $\#2$ frogs travel to the same node from $b$ (call it $c$), $h(x)$ can be expressed as $\mathbb{E}[x^{V'_b}]=\mathbb{E}[x^{V'_b};A_0]+\mathbb{E}[x^{V'_b};A^c_0]$.  In the event $A_0$, since one of the two $\# 2$ frogs is stopped at $c$, it follows that $V'_b|A_0$ has the same distribution as $V_b$.  Hence, $\mathbb{E}[x^{V'_b};A_0]=\mathbb{P}(A_0)\mathbb{E}[x^{V'_b}|A_0]=\mathbb{P}(A_0)\mathbb{E}[x^{V_b}]=\frac{1}{3}\mathcal{L}f(x)$.

\medskip
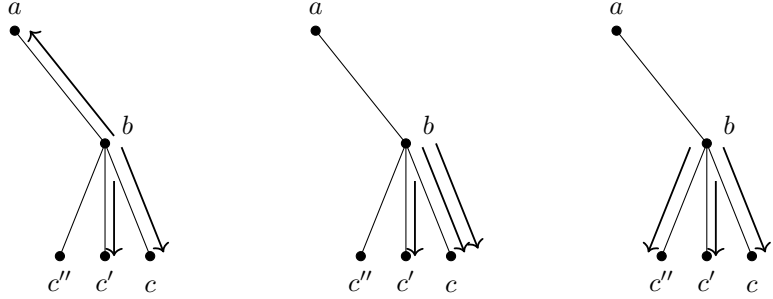
\begin{figure}[H]
    \centering
    \begin{tikzpicture}
    
    \draw[fill=black] (-3,0) circle (.4ex);
    \draw[fill=black] (-1.8,-1.5) circle (.4ex);
    \draw (-3,0) -- (-1.8,-1.5);
    
    \draw[fill=black] (-2.4,-3) circle (.4ex);
    \draw[fill=black] (-1.8,-3) circle (.4ex);
    \draw[fill=black] (-1.2,-3) circle (.4ex);
    \draw (-1.8,-1.5) -- (-2.4,-3);
    \draw (-1.8,-1.5) -- (-1.8,-3);
    \draw (-1.8,-1.5) -- (-1.2,-3);
    
    \draw (-3,.1) node [anchor=south]{$a$};
    \draw (-1.7,-1.25) node [anchor=west]{$b$};
    \draw (-2.4,-3.07) node [anchor=north]{$c''$};
    \draw (-1.8,-3.07) node [anchor=north]{$c'$};
    \draw (-1.2,-3.2) node [anchor=north]{$c$};
    
    \draw[->,line width=.7pt] (-1.58,-1.55) -- (-1.02,-2.95);
    \draw[->,line width=.7pt] (-1.68,-1.4) -- (-2.8,0);
    \draw[->,line width=.7pt] (-1.68,-2) -- (-1.68,-3);
    
      \draw[fill=black] (1,0) circle (.4ex);
    \draw[fill=black] (2.2,-1.5) circle (.4ex);
    \draw (1,0) -- (2.2,-1.5);
    
    \draw[fill=black] (1.6,-3) circle (.4ex);
    \draw[fill=black] (2.2,-3) circle (.4ex);
    \draw[fill=black] (2.8,-3) circle (.4ex);
    \draw (2.2,-1.5) -- (1.6,-3);
    \draw (2.2,-1.5) -- (2.2,-3);
    \draw (2.2,-1.5) -- (2.8,-3);
    
    \draw (1,.1) node [anchor=south]{$a$};
    \draw (2.3,-1.25) node [anchor=west]{$b$};
    \draw (1.6,-3.07) node [anchor=north]{$c''$};
    \draw (2.2,-3.07) node [anchor=north]{$c'$};
    \draw (2.8,-3.2) node [anchor=north]{$c$};
    
    \draw[->,line width=.7pt] (2.42,-1.55) -- (2.98,-2.95);
    \draw[->,line width=.7pt] (2.6,-1.5) -- (3.16,-2.9);
    \draw[->,line width=.7pt] (2.32,-2) -- (2.32,-3);
    
     \draw[fill=black] (5,0) circle (.4ex);
    \draw[fill=black] (6.2,-1.5) circle (.4ex);
    \draw (5,0) -- (6.2,-1.5);
    
    \draw[fill=black] (5.6,-3) circle (.4ex);
    \draw[fill=black] (6.2,-3) circle (.4ex);
    \draw[fill=black] (6.8,-3) circle (.4ex);
    \draw (6.2,-1.5) -- (5.6,-3);
    \draw (6.2,-1.5) -- (6.2,-3);
    \draw (6.2,-1.5) -- (6.8,-3);
    
    \draw (5,.1) node [anchor=south]{$a$};
    \draw (6.3,-1.25) node [anchor=west]{$b$};
    \draw (5.6,-3.07) node [anchor=north]{$c''$};
    \draw (6.2,-3.07) node [anchor=north]{$c'$};
    \draw (6.8,-3.2) node [anchor=north]{$c$};
    
    \draw[->,line width=.7pt] (5.98,-1.55) -- (5.42,-2.95);
    \draw[->,line width=.7pt] (6.42,-1.55) -- (6.98,-2.95);
    \draw[->,line width=.7pt] (6.32,-2) -- (6.32,-3);
    
      \end{tikzpicture}
     \caption{Illustrations representing the three events (from left to right) $C_1,\ C_2$, and $C_3$.}
    \label{fig:Model}
    
    \end{figure}

Turning next to the event $A^c_0$, it will be split up into the events $C_1,\ C_2,$ and $C_3$ (see Figure 6 above) as follows: $C_1$ represents having the $\#2$ frogs go to different nodes and the $\#1$ frog go to $a$; $C_2$ represents the $\#2$ frogs going to different nodes and the $\#1$ frog going to the same node as one of the $\#2$ frogs; and $C_3$ represents the $\#2$ frogs going to different nodes and the $\#1$ frog going to the third sibling node.  Evaluating $\mathbb{E}[x^{V'_b};C_i]$ for each i now gives the following:
\begin{align}\label{C_1 comp}
\mathbb{E}[x^{V'_b};C_1]&=\frac{x}{4}\cdot\frac{2}{3}\cdot\mathbb{E}[x^{V_b}|B_2\cup B_3]=\frac{x}{6}\cdot\frac{\Big{(}\frac{1}{2}f\Big{(}\frac{x+1}{3}\Big{)}^2 +\frac{1}{2}f\Big{(}\frac{x+2}{3}\Big{)}^3-\frac{1}{2}f\Big{(}\frac{x+2}{3}\Big{)}f\Big{(}\frac{x+1}{3}\Big{)}^2\Big{)}}{1/2}\\
&=\frac{x}{6}\Big{(}f\Big{(}\frac{x+1}{3}\Big{)}^2 +f\Big{(}\frac{x+2}{3}\Big{)}^3-f\Big{(}\frac{x+2}{3}\Big{)}f\Big{(}\frac{x+1}{3}\Big{)}^2\Big{)}\nonumber
\end{align}
\medskip
\begin{equation}\label{C_2 comp}
\mathbb{E}[x^{V'_b};C_2]=\frac{1}{3}\mathbb{E}[x^{V_b}|B_2\cup B_3]=\frac{1}{3}\Big{(}f\Big{(}\frac{x+1}{3}\Big{)}^2 +f\Big{(}\frac{x+2}{3}\Big{)}^3-f\Big{(}\frac{x+2}{3}\Big{)}f\Big{(}\frac{x+1}{3}\Big{)}^2\Big{)}
\end{equation}
\medskip
\begin{equation}\label{C_3 comp}
\mathbb{E}[x^{V'_b};C_3]=\frac{1}{6}\Big{(}\sum_{k=0}^{\infty}\mathbb{P}(V_c=k)\sum_{j=0}^k \Big{(}\frac{1}{3}\Big{)}^j\Big{(}\frac{2}{3}\Big{)}^{k-j}\binom{k}{j}x^j\Big{)}^3=\frac{1}{6}f\Big{(}\frac{x+2}{3}\Big{)}^3
\end{equation}
Adding the expressions \eqref{C_1 comp}-\eqref{C_3 comp} to our expression for $\mathbb{E}[x^{V'_b};A_0]$ then gives
$$h(x)=\mathbb{E}[x^{V'_b}]=\frac{1}{3}\mathcal{L}f(x)+\frac{x+3}{6}f\Big{(}\frac{x+2}{3}\Big{)}^3 +\frac{x+2}{6}\Big(f\Big(\frac{x+1}{3}\Big)^2 -f\Big(\frac{x+2}{3}\Big)f\Big(\frac{x+1}{3}\Big)^2\Big)=\mathcal{H}f(x)$$
Hence, the proof is complete.
\end{proof}

With Lemmas \ref{lemma:fmt1} and \ref{lemma:fmt2} established, the proof of Theorem \ref{theorem:neat2} can now be presented.

\begin{proof}[Proof of Theorem \ref{theorem:neat2}] Begin by separating the collection of possible outcomes into the three events $D_1,\ D_2,$ and $D_3$ (see Figures 7, 8, and 9 below).

\bigskip
\begin{figure}[H]
    \centering
    \begin{tikzpicture}
    
    \draw[fill=black] (0,0) circle (.4ex);
    \draw[fill=black] (-2,-1.5) circle (.4ex);
    \draw[fill=black] (0,-1.5) circle (.4ex);
    \draw[fill=black] (2,-1.5) circle (.4ex);
    \draw (0,0) -- (-2,-1.5);
    \draw (0,0) -- (0,-1.5);
    \draw (0,0) -- (2,-1.5);
    
    \draw[fill=black] (-2.5,-2.7) circle (.4ex);
    \draw[fill=black] (-1.5,-2.7) circle (.4ex);
    \draw[fill=black] (-.5,-2.7) circle (.4ex);
    \draw[fill=black] (.5,-2.7) circle (.4ex);
    \draw[fill=black] (1.5,-2.7) circle (.4ex);
    \draw[fill=black] (2.5,-2.7) circle (.4ex);
    
    \draw (-1.5,-2.7) -- (-2,-1.5) -- (-2.5,-2.7);
    \draw (-.5,-2.7) -- (0,-1.5) -- (.5,-2.7);
    \draw (1.5,-2.7) -- (2,-1.5) -- (2.5,-2.7);
    
    \draw (0,0) node [anchor=south]{$\varnothing$};
    \draw (2.55,-1.5) node [anchor=south east]{$a$};
    \draw (2.75,-3.3) node [anchor=south east]{$b$};
    \draw (1.75,-3.3) node [anchor=south east]{$b'$};

    \draw[->,line width=.7pt] (.3,0) -- (2.1,-1.35);
    \draw[->,line width=.7pt] (2.2,-1.5) -- (2.65,-2.58);
    \draw[->,line width=.7pt] (1.8,-1.5) -- (1.35,-2.58);
    
    \end{tikzpicture}
     \caption{A representation of $D_1$, defined as the event in which the frog coming from the root and the  frog at the first vertex it hits (labelled $a$ in the figure above) go to different children of $a$.}
    \label{fig:Model}
    
    \end{figure}
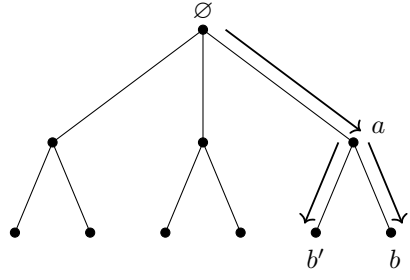
    
    \bigskip
    \begin{figure}[H]
    \centering
    \begin{tikzpicture}
    
    \draw[fill=black] (0,0) circle (.4ex);
    \draw[fill=black] (-2,-1.5) circle (.4ex);
    \draw[fill=black] (0,-1.5) circle (.4ex);
    \draw[fill=black] (2,-1.5) circle (.4ex);
    \draw (0,0) -- (-2,-1.5);
    \draw (0,0) -- (0,-1.5);
    \draw (0,0) -- (2,-1.5);
    
    \draw[fill=black] (-2.5,-2.7) circle (.4ex);
    \draw[fill=black] (-1.5,-2.7) circle (.4ex);
    \draw[fill=black] (-.5,-2.7) circle (.4ex);
    \draw[fill=black] (.5,-2.7) circle (.4ex);
    \draw[fill=black] (1.5,-2.7) circle (.4ex);
    \draw[fill=black] (2.5,-2.7) circle (.4ex);
    
    \draw (-1.5,-2.7) -- (-2,-1.5) -- (-2.5,-2.7);
    \draw (-.5,-2.7) -- (0,-1.5) -- (.5,-2.7);
    \draw (1.5,-2.7) -- (2,-1.5) -- (2.5,-2.7);
    
    \draw (0,0) node [anchor=south]{$\varnothing$};
    \draw (2.55,-1.5) node [anchor=south east]{$a$};
    \draw (2.75,-3.3) node [anchor=south east]{$b$};
    \draw (1.75,-3.3) node [anchor=south east]{$b'$};
    
    \draw[->,line width=.7pt] (.3,0) -- (2.1,-1.35);
    \draw[->,line width=.7pt] (1.75,-1.54) -- (.1,-.3);
    \draw[->,line width=.7pt] (2.2,-1.5) -- (2.65,-2.58);
    
    \end{tikzpicture}
     \caption{A representation of $D_2$, defined as the event in which the frog at the first vertex hit, upon being activated, returns to the root.}
    \label{fig:Model}
    
    \end{figure}
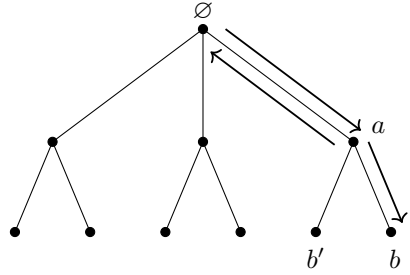

\bigskip
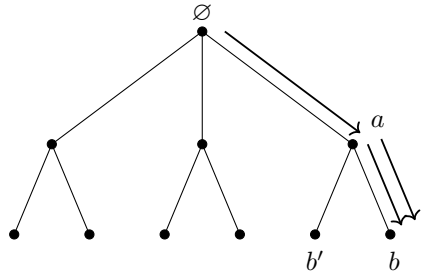
\begin{figure}[H]
    \centering
    \begin{tikzpicture}
    
    \draw[fill=black] (0,0) circle (.4ex);
    \draw[fill=black] (-2,-1.5) circle (.4ex);
    \draw[fill=black] (0,-1.5) circle (.4ex);
    \draw[fill=black] (2,-1.5) circle (.4ex);
    \draw (0,0) -- (-2,-1.5);
    \draw (0,0) -- (0,-1.5);
    \draw (0,0) -- (2,-1.5);
    
    \draw[fill=black] (-2.5,-2.7) circle (.4ex);
    \draw[fill=black] (-1.5,-2.7) circle (.4ex);
    \draw[fill=black] (-.5,-2.7) circle (.4ex);
    \draw[fill=black] (.5,-2.7) circle (.4ex);
    \draw[fill=black] (1.5,-2.7) circle (.4ex);
    \draw[fill=black] (2.5,-2.7) circle (.4ex);
    
    \draw (-1.5,-2.7) -- (-2,-1.5) -- (-2.5,-2.7);
    \draw (-.5,-2.7) -- (0,-1.5) -- (.5,-2.7);
    \draw (1.5,-2.7) -- (2,-1.5) -- (2.5,-2.7);
    
    \draw (0,0) node [anchor=south]{$\varnothing$};
    \draw (2.55,-1.4) node [anchor=south east]{$a$};
    \draw (2.75,-3.3) node [anchor=south east]{$b$};
    \draw (1.75,-3.3) node [anchor=south east]{$b'$};
    
    \draw[->,line width=.7pt] (.3,0) -- (2.1,-1.35);
    \draw[->,line width=.7pt] (2.2,-1.5) -- (2.65,-2.58);
    \draw[->,line width=.7pt] (2.38,-1.43) -- (2.83,-2.51);
    
    \end{tikzpicture}
     \caption{A representation of $D_3$, defined as the event in which the frog coming from the root and the frog coming from $a$ (where $a$ once again represents the first vertex landed on) go to the same child of $a$.}
    \label{fig:Model}
    
    \end{figure}
    
\medskip
\noindent
Next we compute $\mathbb{E}[x^V;D_i]$ for each i beginning with $i=1$.
\begin{equation}\label{D_1 comp}
\mathbb{E}[x^{V};D_1]=\frac{1}{3}\Big{(}\sum_{k=0}^{\infty}\mathbb{P}(V_b=k)\sum_{j=0}^k\Big{(}\frac{1}{2}\Big{)}^kx^j\binom{k}{j}\Big{)}^2=\frac{1}{3}\Big{(}\mathcal{L}[f]\Big{(}\frac{x+1}{2}\Big{)}\Big{)}^2
\end{equation}
(where above we use the fact, shown in \eqref{V_b comp}, that $\mathbb{E}[x^{V_b}]=\mathcal{L}f(x)$).  $D_2$ can be separated into the two events $D^{(1)}_2$ and $D^{(2)}_2$ as follows: $D^{(1)}_2$ represents having all frogs that go to $a$ from the sub-tree rooted at $b$ then travel to the root; and $D^{(2)}_2$ represents having at least one frog that travels to $a$ from the sub-tree rooted at $b$ then go to $b'$ (i.e. $D_2/D^{(1)}_2$).  Computing $\mathbb{E}[x^V;D^{(i)}_2]$ for $i=1,2$ now gives
\begin{equation}\label{D_2 comp1}
\mathbb{E}[x^V;D^{(1)}_2]=\frac{x}{3}\sum_{k=0}^{\infty}\mathbb{P}(V_b=k)\Big{(}\frac{1}{2}\Big{)}^k x^k=\frac{x}{3}\mathcal{L}[f]\Big{(}\frac{x}{2}\Big{)}
\end{equation}
\begin{align}\label{D_2 comp2}
\mathbb{E}[x^V;D^{(2)}_2]&=\frac{x}{3}\sum_{k=1}^{\infty}\mathbb{P}(V_b=k)\sum_{j=0}^{k-1}\Big{(}\frac{1}{2}\Big{)}^k\binom{k}{j}x^j\mathcal{L}[f]\Big{(}\frac{x+1}{2}\Big{)}\\
&=\frac{x}{3}\mathcal{L}[f]\Big{(}\frac{x+1}{2}\Big{)}\sum_{k=1}^{\infty}\mathbb{P}(V_b=k)\Big{(}\Big{(}\frac{x+1}{2}\Big{)}^k-\Big{(}\frac{x}{2}\Big{)}^k\Big{)}\nonumber\\
&=\frac{x}{3}\mathcal{L}[f]\Big{(}\frac{x+1}{2}\Big{)}\Big{(}\mathcal{L}[f]\Big{(}\frac{x+1}{2}\Big{)}-\mathcal{L}[f]\Big{(}\frac{x}{2}\Big{)}\Big{)}\nonumber
\end{align}
Moving on to $D_3$, it can also be broken up into two separate events in the following way: $D^{(1)}_3$ represents having all frogs that go to $a$ from the sub-tree rooted at $b$ then travel to the root; and $D^{(2)}_3$ represents having at least one frog that travels to $a$ from the sub-tree rooted at $b$ then go to $b'$ (note the only difference between these two events and the events $D^{(1)}_2$ and $D^{(2)}_2$ respectively is the behavior of the frog starting at $a$; as seen in Figures 8 and 9).  Computing $\mathbb{E}[x^V;D^{(i)}_3]$ for $i=1,2$ gives
\begin{equation}\label{D_3 comp1}
\mathbb{E}[x^V;D^{(1)}_3]=\frac{1}{3}\sum_{k=0}^{\infty}\mathbb{P}(V'_b=k)\Big{(}\frac{1}{2}\Big{)}^k x^k=\frac{1}{3}\mathcal{H}[f]\Big{(}\frac{x}{2}\Big{)}
\end{equation}
\begin{align}\label{D_3 comp2}
\mathbb{E}[x^V;D^{(2)}_3]&=\frac{1}{3}\sum_{k=1}^{\infty}\mathbb{P}(V'_b=k)\sum_{j=0}^{k-1}\Big{(}\frac{1}{2}\Big{)}^k\binom{k}{j}x^j\mathcal{L}[f]\Big{(}\frac{x+1}{2}\Big{)}\\
&=\frac{1}{3}\mathcal{L}[f]\Big{(}\frac{x+1}{2}\Big{)}\sum_{k=1}^{\infty}\mathbb{P}(V'_b=k)\Big{(}\Big{(}\frac{x+1}{2}\Big{)}^k-\Big{(}\frac{x}{2}\Big{)}^k\Big{)}\nonumber\\
&=\frac{1}{3}\mathcal{L}[f]\Big{(}\frac{x+1}{2}\Big{)}\Big{(}\mathcal{H}[f]\Big{(}\frac{x+1}{2}\Big{)}-\mathcal{H}[f]\Big{(}\frac{x}{2}\Big{)}\Big{)}\nonumber
\end{align}
Now adding together the expressions \eqref{D_1 comp}-\eqref{D_3 comp2} gives
\begin{align}
f(x)&=\mathbb{E}[x^V]=\sum_{i=1}^3\mathbb{E}[x^V;D_i]\nonumber\\
&=\frac{1}{3}\Big{(}\mathcal{L}[f]\Big{(}\frac{x+1}{2}\Big{)}\Big{)}^2+\frac{x}{3}\mathcal{L}[f]\Big{(}\frac{x}{2}\Big{)}
+\frac{x}{3}\mathcal{L}[f]\Big{(}\frac{x+1}{2}\Big{)}\Big{(}\mathcal{L}[f]\Big{(}\frac{x+1}{2}\Big{)}-\mathcal{L}[f]\Big{(}\frac{x}{2}\Big{)}\Big{)}\nonumber\\
&+\frac{1}{3}\mathcal{H}[f]\Big{(}\frac{x}{2}\Big{)}+\frac{1}{3}\mathcal{L}[f]\Big{(}\frac{x+1}{2}\Big{)}\Big{(}\mathcal{H}[f]\Big{(}\frac{x+1}{2}\Big{)}-\mathcal{H}[f]\Big{(}\frac{x}{2}\Big{)}\Big{)}\nonumber\\
&=\frac{x}{3}\mathcal{L}[f]\Big(\frac{x}{2}\Big)+\frac{x+1}{3}\Big(\mathcal{L}[f]\Big(\frac{x+1}{2}\Big)\Big)^2-\frac{x}{3}\mathcal{L}[f]\Big(\frac{x+1}{2}\Big)\mathcal{L}[f]\Big(\frac{x}{2}\Big)+\frac{1}{3}\mathcal{H}[f]\Big(\frac{x}{2}\Big)\nonumber\\
&+\frac{1}{3}\mathcal{L}[f]\Big(\frac{x+1}{2}\Big)\mathcal{H}[f]\Big(\frac{x+1}{2}\Big)-\frac{1}{3}\mathcal{L}[f]\Big(\frac{x+1}{2}\Big)\mathcal{H}[f]\Big(\frac{x}{2}\Big)=\mathcal{A}f(x)\nonumber
\end{align}
Hence, the proof of Theorem \ref{theorem:neat2} is complete.
\end{proof}

\medskip
\noindent
2.4. {\bf Monotonicity of $\mathcal{A}$.} In order to prove Theorem \ref{theorem:neat1} (i.e. show that $\mathbb{P}(V=\infty)=1$) it suffices to show that $f(x)=0$ on $[0, 1)$.  With the proof of Theorem \ref{theorem:neat2} now complete, this task is reduced to showing that $\mathcal{A}^n f(x)\rightarrow 0$ as $n\rightarrow\infty\ \forall\ x\in [0, 1)$.  The first major step involved in accomplishing this will be to prove the following proposition.

\begin{prop}\label{prop:monotonicity}
Define $\mathcal{S}$ to be the space of all probability generating functions (on $[0, 1]$) associated with probability distributions on $\left\{0,1,\dots\right\}\cup\left\{\infty\right\}$.  Let $g_1, g_2\in\mathcal{S}$ with $g_1\geq g_2$ on $[0, 1]$.  Then $\mathcal{A}g_1\geq\mathcal{A}g_2$ on $[0, 1]$.
\end{prop}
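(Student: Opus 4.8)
The operators $\mathcal L$, $\mathcal H$, $\mathcal A$ are \emph{not} monotone as operators on $C^0([0,1])$ -- their defining formulas contain minus signs, and pointwise domination of two probability generating functions on $[0,1]$ is strictly weaker than stochastic domination of the underlying laws (for instance $\tfrac12(1+x^2)\ge x$ on $[0,1]$ while neither law dominates the other), so no naive coupling argument can work. The plan is therefore to exploit two features that hold precisely because $g_1,g_2\in\mathcal S$. First, every $g\in\mathcal S$ is nondecreasing on $[0,1]$ and takes values in $[0,1]$ there, so at the three points appearing inside $\mathcal L$ and $\mathcal H$ we automatically have $0\le g(\tfrac x3)\le g(\tfrac{x+1}3)\le g(\tfrac{x+2}3)\le 1$. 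Second, $\mathcal L$ and $\mathcal H$ map $\mathcal S$ into $\mathcal S$: re-running the derivations in the proofs of Lemmas \ref{lemma:fmt1} and \ref{lemma:fmt2} verbatim, with three independent copies of an arbitrary $W\sim g$ in place of the random variables $V_c$, exhibits $\mathcal Lg$ and $\mathcal Hg$ as probability generating functions of genuine $\{0,1,\dots\}\cup\{\infty\}$-valued random variables (every manipulation there is a legitimate probabilistic operation -- binomial thinning, superposition of independent counts, conditioning on disjoint events), hence $\mathcal Lg$ and $\mathcal Hg$ are again nondecreasing and $[0,1]$-valued on $[0,1]$. Granting these, the proof splits into showing that $\mathcal L$ and $\mathcal H$ are monotone on $\mathcal S$, and then that the outer layer that assembles $\mathcal A$ out of $\mathcal Lg$ and $\mathcal Hg$ is monotone.

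For the inner step, fix $x\in[0,1]$ and set $u=g(\tfrac x3)$, $v=g(\tfrac{x+1}3)$, $w=g(\tfrac{x+2}3)$. Factoring Definition \ref{defn:fm1} gives
\[
\mathcal Lg(x)=\tfrac{x+3}{4}w^{3}+\tfrac{x+2}{2}v^{2}(1-w)+\tfrac{x+1}{4}u(1-w)(1+w-2v),
\]
and Definition \ref{defn:fm2} gives $\mathcal Hg(x)=\tfrac13\mathcal Lg(x)+\tfrac{x+3}{6}w^{3}+\tfrac{x+2}{6}v^{2}(1-w)$. On the region $R=\{0\le u\le v\le w\le 1\}$ one has $1+w-2v\ge 1-w\ge 0$, so all these terms are nonnegative. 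I would compute the three partial derivatives in $u,v,w$ of the right-hand sides and check each is $\ge 0$ on $R$ for every $x\in[0,1]$; these reduce to one-line polynomial inequalities using only $0\le u\le v\le w\le 1$ and $x\le 3$ (for example $\partial_{u}\mathcal Lg=\tfrac{x+1}{4}(1-w)(1+w-2v)\ge 0$, and $\partial_{w}\mathcal Lg=\tfrac{3(x+3)}{4}w^{2}-\tfrac{x+2}{2}v^{2}-\tfrac{x+1}{2}u(w-v)\ge\tfrac{3(x+3)}{4}w^{2}-\tfrac{2x+3}{2}w^{2}\ge 0$). Once this is known, $\mathcal Lg_1(x)\ge\mathcal Lg_2(x)$ follows by passing from $(u_2,v_2,w_2)$ to $(u_1,v_1,w_1)$ -- both in $R$, and since $g_1\ge g_2$ each coordinate of the first triple is at most that of the second -- along the staircase path that first raises $w$, then $v$, then $u$; every intermediate point stays in $R$ because each coordinate not yet raised still equals its original (smaller) value while each coordinate already raised is capped by its target, and the targets respect $u_1\le v_1\le w_1$. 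The same argument applies to $\mathcal H$, since the extra summand $\tfrac{x+3}{6}w^{3}+\tfrac{x+2}{6}v^{2}(1-w)$ is likewise coordinatewise nondecreasing on $R$.

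For the outer step, put $L_0=\mathcal Lg(\tfrac x2)$, $L_1=\mathcal Lg(\tfrac{x+1}2)$, $H_0=\mathcal Hg(\tfrac x2)$, $H_1=\mathcal Hg(\tfrac{x+1}2)$; because $\tfrac x2\le\tfrac{x+1}2$ and $\mathcal Lg,\mathcal Hg\in\mathcal S$, we have $0\le L_0\le L_1\le 1$ and $0\le H_0\le H_1\le 1$. Collecting the terms of Definition \ref{defn:fm3},
\[
\mathcal Ag(x)=\tfrac x3 L_0(1-L_1)+\tfrac13 H_0(1-L_1)+\tfrac{x+1}{3}L_1^{2}+\tfrac13 L_1H_1 .
\]
On the box $\{0\le L_0\le L_1\le 1,\ 0\le H_0\le H_1\le 1\}$ one checks $\partial_{L_0}\mathcal Ag=\tfrac x3(1-L_1)\ge 0$, $\partial_{H_0}\mathcal Ag=\tfrac13(1-L_1)\ge 0$, $\partial_{H_1}\mathcal Ag=\tfrac13 L_1\ge 0$, and $\partial_{L_1}\mathcal Ag=\tfrac13\bigl(-xL_0-H_0+2(x+1)L_1+H_1\bigr)\ge\tfrac13\bigl((x+2)L_0+(H_1-H_0)\bigr)\ge 0$, where the last bound uses $L_0\le L_1$ and $H_0\le H_1$. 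Since $g_1\ge g_2$ gives $L_0^{(1)}\ge L_0^{(2)}$, $L_1^{(1)}\ge L_1^{(2)}$, $H_0^{(1)}\ge H_0^{(2)}$, $H_1^{(1)}\ge H_1^{(2)}$ by the inner step, passing from $(L_0^{(2)},L_1^{(2)},H_0^{(2)},H_1^{(2)})$ to $(L_0^{(1)},L_1^{(1)},H_0^{(1)},H_1^{(1)})$ along the staircase path that raises $L_1$ first, then $L_0$, then $H_1$, then $H_0$ -- which again keeps every intermediate point in the box -- gives $\mathcal Ag_1(x)\ge\mathcal Ag_2(x)$. As $x\in[0,1]$ was arbitrary, the proposition follows.

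The main difficulty is less any single estimate than the bookkeeping that ties it together: one must find the precise groupings of the terms of $\mathcal L$, $\mathcal H$, and $\mathcal A$ for which the relevant partial derivatives are signed on exactly the region cut out by the probability-generating-function constraints (the ordering $g(\tfrac x3)\le g(\tfrac{x+1}3)\le g(\tfrac{x+2}3)$ and the bound $g\le 1$, passed through $\mathcal L,\mathcal H$ to $L_0\le L_1$, $H_0\le H_1$, $L_1\le 1$), and then verify that one can interpolate between any two admissible triples or $4$-tuples without leaving that region -- which is exactly why the order of the staircase path (raise the largest coordinate first) is essential rather than cosmetic.
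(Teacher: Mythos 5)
Your proposal is correct and follows essentially the same route as the paper: prove monotonicity of $\mathcal{L}$ and $\mathcal{H}$ on $\mathcal{S}$ first, then handle the outer operator assembling $\mathcal{A}$ from $\mathcal{L}g$ and $\mathcal{H}g$ (the paper's $\tilde{\mathcal{A}}$), relying in both steps on the facts that $g\in\mathcal{S}$ forces the ordering and $[0,1]$-bounds of its values and that $\mathcal{L},\mathcal{H}$ map $\mathcal{S}$ into $\mathcal{S}$ (the paper's Lemma \ref{lemma:S closed}). The only difference is cosmetic: you verify nonnegativity of the coordinatewise partial derivatives and move along a staircase path in the finitely many values $(u,v,w)$ and $(L_0,L_1,H_0,H_1)$, whereas the paper differentiates along the convex interpolation $F_t=tg_1+(1-t)g_2$; the resulting inequalities are the same ones, just written in your factored form.
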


\medskip
\noindent
The proof of \ref{prop:monotonicity} will require the lemma below.

\begin{lemma}\label{lemma:S closed} $\mathcal{LS}\subseteq\mathcal{S}$, $\mathcal{HS}\subseteq\mathcal{S}$, and $\mathcal{AS}\subseteq\mathcal{S}$.
\end{lemma}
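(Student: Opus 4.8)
The plan is to recognize $\mathcal{L}$, $\mathcal{H}$ and $\mathcal{A}$ as probability-generating-function maps attached to concrete frog-type constructions, so that stability of $\mathcal{S}$ becomes essentially automatic. First recall that $g\in\mathcal{S}$ exactly when $g(x)=\sum_{k\ge 0}p_k x^k$ with all $p_k\ge 0$ and $\sum_k p_k\le 1$; equivalently $g(x)=\mathbb{E}[x^{W}]$ (with the convention $x^{\infty}:=0$ on $[0,1)$) for some random variable $W$ valued in $\{0,1,\dots\}\cup\{\infty\}$. The class $\mathcal{S}$ is closed under finite products, under countable sub-convex combinations $\sum_j\lambda_j g_j$ with $\lambda_j\ge 0$ and $\sum_j\lambda_j\le 1$, and under composition $g\circ h$ with $h\in\mathcal{S}$; probabilistically these amount to summing independent copies, conditioning/mixing, and compounding $\sum_{i\le W}Y_i$. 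One also checks by inspecting coefficients that each affine map $x\mapsto\tfrac{x}{3},\tfrac{x+1}{3},\tfrac{x+2}{3},\tfrac{x}{2},\tfrac{x+1}{2}$ lies in $\mathcal{S}$.

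For $\mathcal{L}$ I would fix $g\in\mathcal{S}$, realize it as the law of some $W$ on $\{0,1,\dots\}\cup\{\infty\}$, and rerun the scenario from the proof of Lemma \ref{lemma:fmt1} on $\{b\}\cup\mathbb{T}_{3,2}(c)$ \emph{verbatim}, changing only that each time a sub-tree rooted at a child of $b$ is activated it independently emits at $b$ a number of frogs distributed as $W$ in place of $V_c$. Because at most one frog ever enters each of these three sub-trees, the recipe is a measurable function of at most three independent copies of $W$ together with finitely many independent uniform direction choices, so it produces a bona fide $\{0,1,\dots\}\cup\{\infty\}$-valued variable $\widetilde V_b$. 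Every identity in \eqref{A_1 comp}--\eqref{V_b comp} used only the one-dimensional law of $V_c$ and the independence of distinct sibling sub-trees, so with $W$ substituted for $V_c$ the identical computation yields $\mathbb{E}[x^{\widetilde V_b}]=\mathcal{L}g(x)$ on $[0,1)$, whence $\mathcal{L}g\in\mathcal{S}$. The operators $\mathcal{H}$ and $\mathcal{A}$ are handled the same way: for $\mathcal{H}$, run the three-frog scenario of the proof of Lemma \ref{lemma:fmt2} with sub-trees of $b$ emitting $W$-distributed counts, getting a variable $\widetilde V_b'$ with $\mathbb{E}[x^{\widetilde V_b'}]=\mathcal{H}g(x)$ (the intermediate quantities \eqref{C_1 comp}--\eqref{C_3 comp} are again expressed through $g$ and the already-constructed $\mathcal{L}g$); and for $\mathcal{A}$, run the root-step scenario of the proof of Theorem \ref{theorem:neat2}, where the two sub-trees rooted at children of $a$ now emit counts distributed as $\widetilde V_b$ (pgf $\mathcal{L}g$) when hit once and as $\widetilde V_b'$ (pgf $\mathcal{H}g$) when hit twice, and \eqref{D_1 comp}--\eqref{D_3 comp2} assemble the generating function of the outcome into $\mathcal{A}g$. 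Hence $\mathcal{H}g,\mathcal{A}g\in\mathcal{S}$.

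The main obstacle is the one verification underlying all three cases: I must check that nothing in the proofs of Lemmas \ref{lemma:fmt1}, \ref{lemma:fmt2} and Theorem \ref{theorem:neat2} uses anything about the input variable other than its distribution and the independence across sibling sub-trees, i.e.\ that each of the displayed identities \eqref{A_1 comp}--\eqref{D_3 comp2} is ``distribution-only'', so that replacing $V_c$ by an arbitrary $W$ (and likewise $V_b$ by $\widetilde V_b$ and $V_b'$ by $\widetilde V_b'$) is legitimate; at $g=f$ this was implicit, but at the level of all of $\mathcal{S}$ it has to be spelled out. A secondary point needing care is the atom at infinity: when $\mathbb{P}(W=\infty)>0$ the constructed variable is $\infty$ on that event, which is exactly what the inequality $\sum_k p_k\le 1$ in the definition of $\mathcal{S}$ permits, and all the generating-function identities persist on $[0,1)$ under $x^{\infty}=0$.

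As a self-contained fallback I would argue algebraically: expand each summand appearing in the derivations of $\mathcal{L}g$, $\mathcal{H}g$, $\mathcal{A}g$ and check, term-by-term against the power series of $g$ (resp.\ $\mathcal{L}g$, $\mathcal{H}g$), that it has nonnegative coefficients in $x$ --- the apparent subtractions, e.g.\ $g(\tfrac{x+1}{3})^2-g(\tfrac{x+2}{3})g(\tfrac{x+1}{3})^2$ and the second difference $g(\tfrac{x+2}{3})-2g(\tfrac{x+1}{3})+g(\tfrac{x}{3})$, have nonnegative Taylor coefficients once each monomial $x^k$ of the series of $g$ is expanded by the binomial theorem --- and then verify that the whole expression at $x=1$ is at most $1$, which reduces to a short monotone optimization over the triple $\bigl(g(\tfrac13),g(\tfrac23),g(1)\bigr)$ (and the analogous triples for $\mathcal{L}g$ and $\mathcal{H}g$) in the region $0\le g(\tfrac13)\le g(\tfrac23)\le g(1)\le 1$, with maximum $1$ attained at $g\equiv 1$.
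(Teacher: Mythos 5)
Your proposal is correct and is essentially the paper's own argument: the paper likewise realizes $\mathcal{L}$, $\mathcal{H}$, and $\mathcal{A}$ as the generating-function maps of the two-frog, three-frog, and root-step scenarios in which the sub-trees are replaced by sources emitting an arbitrary number of frogs with distribution $U$ (pgf $\eta$), and notes that the derivations in Lemmas \ref{lemma:fmt1}, \ref{lemma:fmt2} and Theorem \ref{theorem:neat2} were purely symbolic, so the outputs are genuine probability generating functions. Your explicit attention to the ``distribution-only'' nature of the identities and to the atom at infinity simply makes precise what the paper asserts in passing.
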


\begin{proof}
Begin by defining the following model: Start with a single active frog at the root and sleeping frogs at the other three nodes (see Figure 10 below).  The frog at the root performs a non-backtracking random walk that is stopped upon hitting any one of the six boxes, and any time an active frog hits a vertex with a sleeping frog, that frog is activated and begins performing its own non-backtracking random walk that is stopped upon hitting either the root or one of the boxes.  In addition, the first time a box is hit by a frog, it releases frogs which also perform non-backtracking random walks that are stopped upon hitting either the root or another box.

\bigskip
\begin{figure}[H]
    \centering
    \begin{tikzpicture}
    
    \draw[fill=black] (0,0) circle (.4ex);
    \draw[fill=black] (2,-1.5) circle (.4ex);
    \draw (0,0) -- (2,-1.5);
    
    \draw[fill=black] (.9,-2.9) circle (.4ex);
    \draw[fill=black] (3.1,-2.9) circle (.4ex);
    
    \draw (.9,-2.9) -- (2,-1.5) -- (3.1,-2.9);
    
    \draw (.9,-2.9) -- (.2,-4.1);
    \draw (.9,-2.9) -- (.9,-4.1);
    \draw (.9,-2.9) -- (1.6,-4.1);
    
    \draw (3.1,-2.9) -- (2.4,-4.1);
    \draw (3.1,-2.9) -- (3.1,-4.1);
    \draw (3.1,-2.9) -- (3.8,-4.1);
    
    \draw (-.05,-4.1) -- (.45,-4.1);
    \draw (-.05,-4.1) -- (-.05,-4.6);
    \draw (.45,-4.1) -- (.45,-4.6);
    \draw (-.05,-4.6) -- (.45,-4.6);
    
    \draw (.65,-4.1) -- (1.15,-4.1);
    \draw (.65,-4.1) -- (.65,-4.6);
    \draw (1.15,-4.1) -- (1.15,-4.6);
    \draw (.65,-4.6) -- (1.15,-4.6);
    
    \draw (1.35,-4.1) -- (1.85,-4.1);
    \draw (1.35,-4.1) -- (1.35,-4.6);
    \draw (1.85,-4.1) -- (1.85,-4.6);
    \draw (1.35,-4.6) -- (1.85,-4.6);
    
    \draw (2.15,-4.1) -- (2.65,-4.1);
    \draw (2.15,-4.1) -- (2.15,-4.6);
    \draw (2.65,-4.1) -- (2.65,-4.6);
    \draw (2.15,-4.6) -- (2.65,-4.6);
    
    \draw (2.85,-4.1) -- (3.35,-4.1);
    \draw (2.85,-4.1) -- (2.85,-4.6);
    \draw (3.35,-4.1) -- (3.35,-4.6);
    \draw (2.85,-4.6) -- (3.35,-4.6);
    
    \draw (3.55,-4.1) -- (4.05,-4.1);
    \draw (3.55,-4.1) -- (3.55,-4.6);
    \draw (4.05,-4.1) -- (4.05,-4.6);
    \draw (3.55,-4.6) -- (4.05,-4.6);
    
    \draw (0,0) node [anchor=south]{$\varnothing$};
    \draw (3.5,-2.9) node [anchor=south east]{$b$};
    \draw (1,-2.9) node [anchor=south east]{$b'$};
    \draw (.48,-4.58) node [anchor=south east]{$U$};
    \draw (1.18,-4.58) node [anchor=south east]{$U$};
    \draw (1.88,-4.58) node [anchor=south east]{$U$};
    \draw (2.68,-4.58) node [anchor=south east]{$U$};
    \draw (3.38,-4.58) node [anchor=south east]{$U$};
    \draw (4.08,-4.58) node [anchor=south east]{$U$};
    
    \end{tikzpicture}
     \caption{}
    \label{fig:Model}
    
    \end{figure}
    
\medskip
\noindent
The number of frogs released by the different boxes, conditioned on being hit, are i.i.d. random variables with distribution $U$.  Finally, the model obeys property (ii) with respect to the nodes $b$ and $b'$ (see beginning of Section 2.2).  Now let $\mathcal{A}^*U$ represent the distribution of the number of frogs that hit the root in this model.  It then follows that $\mathcal{A}^*\tilde{V}_c=\tilde{V}$ (where $\tilde{V}$ and $\tilde{V}_c$ represent the distributions of $V$ and $V_c$).  Now recall that the proof of Theorem \ref{theorem:neat2} involved calculating the generating function of $V$ (denoted as $f(x)$) in terms of the generating function of $V_c$ (also denoted as $f(x)$ on account of our recognition that $V$ and $V_c$ share the same distribution) and showing that $V$ has generating function $\mathcal{A}f$ (i.e. $\mathcal{A}f$ is the generating function associated with the distribution $\mathcal{A}^*\tilde{V}_c$).  Since the derivation of this formula was carried out purely symbolically (meaning without taking into account the particular properties of $V_c$ or its generating function $f$), this means that for any probability distribution $U$ (concentrated on $\left\{1,2,\dots\right\}\cup\left\{\infty\right\}$) with generating function $\eta$, the generating function of the distribution $\mathcal{A}^*U$ is $\mathcal{A}\eta$.  Hence, it follows that $\mathcal{AS}\subseteq\mathcal{S}$.

The proofs of $\mathcal{LS}\subseteq\mathcal{S}$ and $\mathcal{HS}\subseteq\mathcal{S}$ are very similar to the proof of $\mathcal{AS}\subseteq\mathcal{S}$, so some of the details will therefore be omitted.  In both cases we define a model using the diagram below (see Figure 11).  For $\mathcal{L}$, begin with two active frogs at vertex $b$, one of which must go in one of the three downward directions, while the other is free to go in any of the four available directions.  Active frogs are to perform non-backtracking random walks which stop upon hitting either $a$ or any of the boxes.  The first time a box is hit by an active frog, it releases active frogs according to the distribution $U$.  The numbers of frogs released by the different boxes (conditioned on being hit) are independent.

\bigskip
\begin{figure}[H]
    \centering
    \begin{tikzpicture}
    
    \draw[fill=black] (1.45,-1.36) circle (.4ex);
    \draw[fill=black] (3.1,-2.9) circle (.4ex);
    \draw (1.45,-1.36) -- (3.1,-2.9);
    
    \draw (3.1,-2.9) -- (2.4,-4.1);
    \draw (3.1,-2.9) -- (3.1,-4.1);
    \draw (3.1,-2.9) -- (3.8,-4.1);
    
    \draw (2.15,-4.1) -- (2.65,-4.1);
    \draw (2.15,-4.1) -- (2.15,-4.6);
    \draw (2.65,-4.1) -- (2.65,-4.6);
    \draw (2.15,-4.6) -- (2.65,-4.6);
    
    \draw (2.85,-4.1) -- (3.35,-4.1);
    \draw (2.85,-4.1) -- (2.85,-4.6);
    \draw (3.35,-4.1) -- (3.35,-4.6);
    \draw (2.85,-4.6) -- (3.35,-4.6);
    
    \draw (3.55,-4.1) -- (4.05,-4.1);
    \draw (3.55,-4.1) -- (3.55,-4.6);
    \draw (4.05,-4.1) -- (4.05,-4.6);
    \draw (3.55,-4.6) -- (4.05,-4.6);
    
    \draw (1.65,-1.25) node [anchor=south east]{$a$};
    \draw (3.5,-2.9) node [anchor=south east]{$b$};
    \draw (2.68,-4.58) node [anchor=south east]{$U$};
    \draw (3.38,-4.58) node [anchor=south east]{$U$};
    \draw (4.08,-4.58) node [anchor=south east]{$U$};
    
    \end{tikzpicture}
     \caption{}
    \label{fig:Model}
    
    \end{figure}
    
\medskip
\noindent
Letting $\mathcal{L}^*U$ represent the distribution of the number of frogs that hit $a$, we find (by a similar argument to the one used for $\mathcal{A}^*$) that the generating function of $\mathcal{L}^*U$ is $\mathcal{L}\eta$ (where $\eta$ once again represents the generating function associated with the distribution $U$).  From this it follows that $\mathcal{LS}\subseteq\mathcal{S}$.  Furthermore, using a model which differs from this one only in that a single additional active frog that can go in any of the three downward directions is positioned at $b$, we also find that $\mathcal{H}^*U$ has generating function $\mathcal{H}\eta$, from which it follows that $\mathcal{HS}\subseteq\mathcal{S}$.  Hence, the proof is complete.
\end{proof}

\medskip
\begin{proof}[Proof of Proposition \ref{prop:monotonicity}] The first step will be to show that $\mathcal{L}g_1(x)\geq\mathcal{L}g_2(x)$ on $[0, 1]$.  Letting $F_t(x)=tg_1(x)+(1-t)g_2(x)$, it will suffice to show that $\frac{\partial (\mathcal{L}F_t(x))}{\partial t}\geq 0\ \forall\ x,t\in [0, 1]$.  Using the formula for $\mathcal{L}$ (see Definition \ref{defn:fm1}) along with the fact that $\frac{\partial F_t(x)}{\partial t}=g_1(x)-g_2(x)$, then gives the following expression:$$
\frac{\partial (\mathcal{L}F_t(x))}{\partial t}=3\cdot\frac{x+3}{4}F_t\Big{(}\frac{x+2}{3}\Big{)}^2\Big{(}g_1\Big{(}\frac{x+2}{3}\Big{)}-g_2\Big{(}\frac{x+2}{3}\Big{)}\Big{)}+4\cdot\frac{x+2}{4}F_t\Big{(}\frac{x+1}{3}\Big{)}\Big{(}g_1\Big{(}\frac{x+1}{3}\Big{)}-g_2\Big{(}\frac{x+1}{3}\Big{)}\Big{)}$$
$$-2\cdot\frac{x+2}{4}F_t\Big{(}\frac{x+1}{3}\Big{)}^2\Big{(}g_1\Big{(}\frac{x+2}{3}\Big{)}-g_2\Big{(}\frac{x+2}{3}\Big{)}\Big{)}-4\cdot\frac{x+2}{4}F_t\Big{(}\frac{x+2}{3}\Big{)}F_t\Big{(}\frac{x+1}{3}\Big{)}\Big{(}g_1\Big{(}\frac{x+1}{3}\Big{)}-g_2\Big{(}\frac{x+1}{3}\Big{)}\Big{)}$$
$$-2\cdot\frac{x+1}{4}F_t\Big{(}\frac{x+2}{3}\Big{)}F_t\Big{(}\frac{x}{3}\Big{)}\Big{(}g_1\Big{(}\frac{x+2}{3}\Big{)}-g_2\Big{(}\frac{x+2}{3}\Big{)}\Big{)}-\frac{x+1}{4}F_t\Big{(}\frac{x+2}{3}\Big{)}^2\Big{(}g_1\Big{(}\frac{x}{3}\Big{)}-g_2\Big{(}\frac{x}{3}\Big{)}\Big{)}-2\cdot\frac{x+1}{4}$$
$$F_t\Big{(}\frac{x+1}{3}\Big{)}\Big{(}g_1\Big{(}\frac{x}{3}\Big{)}-g_2\Big{(}\frac{x}{3}\Big{)}\Big{)}-2\cdot\frac{x+1}{4}F_t\Big{(}\frac{x}{3}\Big{)}\Big{(}g_1\Big{(}\frac{x+1}{3}\Big{)}-g_2\Big{(}\frac{x+1}{3}\Big{)}\Big{)}+2\cdot\frac{x+1}{4}F_t\Big{(}\frac{x+2}{3}\Big{)}F_t\Big{(}\frac{x+1}{3}\Big{)}$$
$$\Big{(}g_1\Big{(}\frac{x}{3}\Big{)}-g_2\Big{(}\frac{x}{3}\Big{)}\Big{)}+2\cdot\frac{x+1}{4}F_t\Big{(}\frac{x+2}{3}\Big{)}F_t\Big{(}\frac{x}{3}\Big{)}\Big{(}g_1\Big{(}\frac{x+1}{3}\Big{)}-g_2\Big{(}\frac{x+1}{3}\Big{)}\Big{)}+2\cdot\frac{x+1}{4}F_t\Big{(}\frac{x+1}{3}\Big{)}F_t\Big{(}\frac{x}{3}\Big{)}$$
$$\Big{(}g_1\Big{(}\frac{x+2}{3}\Big{)}-g_2\Big{(}\frac{x+2}{3}\Big{)}\Big{)}+\frac{x+1}{4}\Big{(}g_1\Big{(}\frac{x}{3}\Big{)}-g_2\Big{(}\frac{x}{3}\Big{)}\Big{)}$$

$$=\Big{[}\Big{(}2\cdot\frac{x+3}{4}F_t\Big{(}\frac{x+2}{3}\Big{)}^2-2\cdot\frac{x+2}{4}F_t\Big{(}\frac{x+1}{3}\Big{)}^2\Big{)}+\Big{(}\frac{x+3}{4}F_t\Big{(}\frac{x+2}{3}\Big{)}^2-\frac{x+1}{2}F_t\Big{(}\frac{x+2}{3}\Big{)}F_t\Big{(}\frac{x}{3}\Big{)}\Big{)}+\Big{(}\frac{x+1}{2}$$
$$F_t\Big{(}\frac{x+1}{3}\Big{)}F_t\Big{(}\frac{x}{3}\Big{)}\Big{)}\Big{]}\Big{(}g_1\Big{(}\frac{x+2}{3}\Big{)}-g_2\Big{(}\frac{x+2}{3}\Big{)}\Big{)}+\Big{[}\Big{(}1-F_t\Big{(}\frac{x+2}{3}\Big{)}\Big{)}\Big{(}(x+2)F_t\Big{(}\frac{x+1}{3}\Big{)}-\frac{x+1}{2}F_t\Big{(}\frac{x}{3}\Big{)}\Big{)}\Big{]}$$
$$\Big{(}g_1\Big{(}\frac{x+1}{3}\Big{)}-g_2\Big{(}\frac{x+1}{3}\Big{)}\Big{)}+\Big{[}\frac{x+1}{4}\Big{(}1+F_t\Big{(}\frac{x+2}{3}\Big{)}-2F_t\Big{(}\frac{x+1}{3}\Big{)}\Big{)}\Big{(}1-F_t\Big{(}\frac{x+2}{3}\Big{)}\Big{)}\Big{]}\Big{(}g_1\Big{(}\frac{x}{3}\Big{)}-g_2\Big{(}\frac{x}{3}\Big{)}\Big{)}$$

\medskip
\noindent
Since $F_t$ is a convex combination of the probability generating functions $g_1$ and $g_2$, this means $F_t\in\mathcal{S}$ (for any $t\in [0, 1]$).  It follows that $0\leq F_t\leq 1$ on $[0, 1]$ and that $F_t$ is increasing on $[0, 1]$ (w.r.t. $x$).  This then implies that each of the three terms inside the first set of brackets above is non-negative.  Likewise, it also follows that the expressions inside the second and third sets of brackets are non-negative.  Coupling this with the fact that $g_1\geq g_2$, it can then be concluded that $\frac{\partial (\mathcal{L}F_t(x))}{\partial t}\geq 0\ \forall\ x,t\in [0, 1]$, from which it follows that $\mathcal{L}g_1\geq\mathcal{L}g_2$ on $[0, 1]$.

It is also necessary to establish that $\mathcal{H}g_1\geq\mathcal{H}g_2$ on $[0, 1]$.  Recalling the formula for $\mathcal{H}$ (see Definition \ref{defn:fm2}) and using the fact, established above, that $\mathcal{L}g_1\geq\mathcal{L}g_2$, this task amounts to showing that $\mathcal{G}g_1\geq\mathcal{G}g_2$ (where $\mathcal{G}g(x)=\frac{x+3}{6}g(\frac{x+2}{3})^3+\frac{x+2}{6}(g(\frac{x+1}{3})^2-g(\frac{x+2}{3})g(\frac{x+1}{3})^2)$).  Once again letting $F_t(x)=tg_1(x)+(1-t)g_2(x)$, we find that
\begin{align}
\frac{\partial (\mathcal{G}F_t(x))}{\partial t}&=\frac{x+3}{6}\cdot3F_t\Big{(}\frac{x+2}{3}\Big{)}^2\Big{(}g_1\Big{(}\frac{x+2}{3}\Big{)}-g_2\Big{(}\frac{x+2}{3}\Big{)}\Big{)}+\frac{x+2}{6}\cdot2F_t\Big{(}\frac{x+1}{3}\Big{)}\Big{(}g_1\Big{(}\frac{x+1}{3}\Big{)}-g_2\Big{(}\frac{x+1}{3}\Big{)}\Big{)}\nonumber\\
-\frac{x+2}{6}F_t&\Big{(}\frac{x+1}{3}\Big{)}^2\Big{(}g_1\Big{(}\frac{x+2}{3}\Big{)}-g_2\Big{(}\frac{x+2}{3}\Big{)}\Big{)}-\frac{x+2}{6}\cdot2F_t\Big{(}\frac{x+2}{3}\Big{)}F_t\Big{(}\frac{x+1}{3}\Big{)}\Big{(}g_1\Big{(}\frac{x+1}{3}\Big{)}-g_2\Big{(}\frac{x+1}{3}\Big{)}\Big{)}\nonumber\\
&=\Big{[}3\cdot\frac{x+3}{6}F_t\Big{(}\frac{x+2}{3}\Big{)}^2-\frac{x+2}{6}F_t\Big{(}\frac{x+1}{3}\Big{)}^2\Big{]}\Big{(}g_1\Big{(}\frac{x+2}{3}\Big{)}-g_2\Big{(}\frac{x+2}{3}\Big{)}\Big{)}\nonumber\\
&+\Big{[}2\cdot\frac{x+2}{6}F_t\Big{(}\frac{x+1}{3}\Big{)}-2\cdot\frac{x+2}{6}F_t\Big{(}\frac{x+1}{3}\Big{)}F_t\Big{(}\frac{x+2}{3}\Big{)}\Big{]}\Big{(}g_1\Big{(}\frac{x+1}{3}\Big{)}-g_2\Big{(}\frac{x+1}{3}\Big{)}\Big{)}\nonumber
\end{align}
It then follows from the three facts --(i) $0\leq F_t\leq 1$, (ii) $F_t$ is increasing with respect to $x$, and (iii) $g_1\geq g_2$ -- that both terms in the above sum are non-negative, which means $$\frac{\partial (\mathcal{G}F_t(x))}{\partial t}\geq 0\ \forall\ x,t\in [0, 1]\implies\mathcal{G}g_1\geq\mathcal{G}g_2\implies\mathcal{H}g_1\geq\mathcal{H}g_2$$as desired.

Having established the monotonicity of $\mathcal{L}$ and $\mathcal{H}$ on $\mathcal{S}$, we are now ready to prove the lemma.  To start, define $\tilde{\mathcal{A}}$ to be an operator on $\mathcal{S}\times\mathcal{S}$ where $$\tilde{\mathcal{A}}[f_1,f_2](x)=\frac{1}{3}f_1\Big{(}\frac{x+1}{2}\Big{)}^2+\frac{x}{3}f_1\Big{(}\frac{x}{2}\Big{)}+\frac{x}{3}f_1\Big{(}\frac{x+1}{2}\Big{)}\Big{(}f_1\Big{(}\frac{x+1}{2}\Big{)}-f_1\Big{(}\frac{x}{2}\Big{)}\Big{)}$$
$$+\frac{1}{3}f_2\Big{(}\frac{x}{2}\Big{)}+\frac{1}{3}f_1\Big{(}\frac{x+1}{2}\Big{)}\Big{(}f_2\Big{(}\frac{x+1}{2}\Big{)}-f_2\Big{(}\frac{x}{2}\Big{)}\Big{)}$$Noting that $\mathcal{A}g(x)=\tilde{\mathcal{A}}[\mathcal{L}g,\mathcal{H}g](x)$ and that $\mathcal{LS}\subseteq\mathcal{S}$, $\mathcal{HS}\subseteq\mathcal{S}$, $\mathcal{L}g_1\geq\mathcal{L}g_2$, and $\mathcal{H}g_1\geq\mathcal{H}g_2$, it suffices to show that if $H_1,H_2,G_1,G_2\in\mathcal{S}$ with $H_1\geq G_1$ and $H_2\geq G_2$, then the following inequality holds:
\begin{equation}\label{finmonop}
\tilde{\mathcal{A}}[H_1,H_2](x)\geq\tilde{\mathcal{A}}[G_1,G_2](x)
\end{equation}
Defining $F^{(i)}_t=tH_i+(1-t)G_i$ (for $i=1,2$), if it can be established that
\begin{equation}\label{derivA}
\frac{\partial (\tilde{\mathcal{A}}[F^{(1)}_t,F^{(2)}_t](x))}{\partial t}\geq 0
\end{equation}
$\forall\ t,x\in [0, 1]$, then \eqref{finmonop} will follow.  Writing out the formula for the left side of \eqref{derivA} gives the following expression:$$\frac{2}{3}F^{(1)}_t\Big{(}\frac{x+1}{2}\Big{)}\Big{(}H_1\Big{(}\frac{x+1}{2}\Big{)}-G_1\Big{(}\frac{x+1}{2}\Big{)}\Big{)}+\frac{x}{3}\Big{(}H_1\Big{(}\frac{x}{2}\Big{)}-G_1\Big{(}\frac{x}{2}\Big{)}\Big{)}+\frac{2x}{3}F^{(1)}_t\Big{(}\frac{x+1}{2}\Big{)}\Big{(}H_1\Big{(}\frac{x+1}{2}\Big{)}-G_1\Big{(}\frac{x+1}{2}\Big{)}\Big{)}$$
$$-\frac{x}{3}F^{(1)}_t\Big{(}\frac{x+1}{2}\Big{)}\Big{(}H_1\Big{(}\frac{x}{2}\Big{)}-G_1\Big{(}\frac{x}{2}\Big{)}\Big{)}-\frac{x}{3}F^{(1)}_t\Big{(}\frac{x}{2}\Big{)}\Big{(}H_1\Big{(}\frac{x+1}{2}\Big{)}-G_1\Big{(}\frac{x+1}{2}\Big{)}\Big{)}+\frac{1}{3}\Big{(}H_2\Big{(}\frac{x}{2}\Big{)}-G_2\Big{(}\frac{x}{2}\Big{)}\Big{)}$$
$$+\frac{1}{3}F^{(1)}_t\Big{(}\frac{x+1}{2}\Big{)}\Big{(}H_2\Big{(}\frac{x+1}{2}\Big{)}-G_2\Big{(}\frac{x+1}{2}\Big{)}\Big{)}+\frac{1}{3}F^{(2)}_t\Big{(}\frac{x+1}{2}\Big{)}\Big{(}H_1\Big{(}\frac{x+1}{2}\Big{)}-G_1\Big{(}\frac{x+1}{2}\Big{)}\Big{)}$$
$$-\frac{1}{3}F^{(1)}_t\Big{(}\frac{x+1}{2}\Big{)}\Big{(}H_2\Big{(}\frac{x}{2}\Big{)}-G_2\Big{(}\frac{x}{2}\Big{)}\Big{)}-\frac{1}{3}F^{(2)}_t\Big{(}\frac{x}{2}\Big{)}\Big{(}H_1\Big{(}\frac{x+1}{2}\Big{)}-G_1\Big{(}\frac{x+1}{2}\Big{)}\Big{)}$$

$$=\Big{[}\Big{(}2\cdot\frac{x+1}{3}F^{(1)}_t\Big{(}\frac{x+1}{2}\Big{)}-\frac{x}{3}F^{(1)}_t\Big{(}\frac{x}{2}\Big{)}\Big{)}+\Big{(}\frac{1}{3}F^{(2)}_t\Big{(}\frac{x+1}{2}\Big{)}-\frac{1}{3}F^{(2)}_t\Big{(}\frac{x}{2}\Big{)}\Big{)}\Big{]}\Big{(}H_1\Big{(}\frac{x+1}{2}\Big{)}-G_1\Big{(}\frac{x+1}{2}\Big{)}\Big{)}$$
$$+\Big{[}\frac{x}{3}-\frac{x}{3}F^{(1)}_t\Big{(}\frac{x+1}{2}\Big{)}\Big{]}\Big{(}H_1\Big{(}\frac{x}{2}\Big{)}-G_1\Big{(}\frac{x}{2}\Big{)}\Big{)}+\Big{[}\frac{1}{3}F^{(1)}_t\Big{(}\frac{x+1}{2}\Big{)}\Big{]}\Big{(}H_2\Big{(}\frac{x+1}{2}\Big{)}-G_2\Big{(}\frac{x+1}{2}\Big{)}\Big{)}$$
$$+\Big{[}\frac{1}{3}-\frac{1}{3}F^{(1)}_t\Big{(}\frac{x+1}{2}\Big{)}\Big{]}\Big{(}H_2\Big{(}\frac{x}{2}\Big{)}-G_2\Big{(}\frac{x}{2}\Big{)}\Big{)}$$
Now noting that $F^{(1)}_t,F^{(2)}_t\in\mathcal{S}$ (implying they are increasing and between $0$ and $1$), and recalling that $H_i\geq G_i$ for $i=1,2$, we see that \eqref{derivA} follows.  This then implies \eqref{finmonop}, which implies $\mathcal{A}g_1\geq\mathcal{A}g_2$.  Hence, the proof of the proposition is complete.
\end{proof}

\medskip
\noindent
2.5. {\bf Completing the proof of Theorem \ref{theorem:neat1}.} Having established that $\mathcal{A}$ is monotone, it follows that $\mathcal{A}^nf\leq\mathcal{A}^n1\ \forall\ n\geq 1$.  Hence, to show that the expression on the left goes to $0$, it suffices to show that $\mathcal{A}^n1\rightarrow 0$ on $[0, 1)$.  This will be achieved by employing a method referred to in \cite{HJJ1} as Poisson thinning.  Specifically, it involves establishing the existence of a sequence $0=a_0<a_1<a_2<\dots$ (diverging to infinity) such that $\mathcal{A}^n1\leq e^{a_n(x-1)}$ (the probability generating function for $\text{Poiss}(a_n)$) for all $n\geq 0$.  The existence of this sequence is established in two parts.  First, in Proposition 2.12 it is shown that $\forall\ a\geq 15$, $\mathcal{A}[e^{a(x-1)}]\leq e^{(a+\epsilon)(x-1)}$ on $[0, 1]$ (where $\epsilon=\frac{1}{20}$).  It then follows from a simple induction argument which relies on the monotonicity of $\mathcal{A}$ established in Proposition \ref{prop:monotonicity}, that $\mathcal{A}^n[e^{a(x-1)}]\leq e^{(a+n\epsilon)(x-1)}\ \forall\ n\geq 1$.  From this point, establishing the existence of the sequence $\left\{a_n\right\}$ reduces to establishing the existence of a finite sequence $0=a_0<a_1<\dots <a_N$ (where $a_N\geq 15$) such that $\mathcal{A}^n1\leq e^{a_n(x-1)}$ on $[0, 1]\ \forall\ n$ with $0\leq n\leq N$.  This is accomplished (with the help of a Python program) in Proposition 2.14, where we inductively construct a sequence $0=a_0<a_1<\dots<a_N$ satisfying the above constraints.  Along with Proposition 2.12, this will then establish the existence of $\left\{a_n\right\}$.  The result $\mathcal{A}^n 1\rightarrow 0$ on $[0, 1)$ follows immediately, which then implies $\mathcal{A}^n f\rightarrow 0$ on $[0, 1)$.  As explained at the beginning of the previous section, this is then sufficient for establishing Theorem \ref{theorem:neat1}.

\begin{prop}\label{prop:uppermonot}
If $a\geq 15$ then $\mathcal{A}[e^{a(x-1)}]\leq e^{(a+\frac{1}{20})(x-1)}$ on $[0, 1]$.
\end{prop}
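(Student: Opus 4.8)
The plan is to substitute $g(x)=e^{a(x-1)}$ into the relation $\mathcal{A}g(x)=\tilde{\mathcal{A}}[\mathcal{L}g,\mathcal{H}g](x)$ from the proof of Proposition~\ref{prop:monotonicity} and to reduce the claim to an elementary one–variable inequality. The key is the Poisson–thinning observation: for $g(x)=e^{a(x-1)}$, evaluating $g$ at any of the six points $\tfrac{x}{6},\tfrac{x+1}{6},\dots,\tfrac{x+5}{6}$ — these being exactly the arguments at which $g$ is sampled when $\mathcal{L}$ or $\mathcal{H}$ is applied at $\tfrac{x}{2}$ or $\tfrac{x+1}{2}$ — yields $w\,\delta^{\,5-m}$, where $w:=e^{\frac{a}{6}(x-1)}\in(0,1]$ and $\delta:=e^{-a/6}$. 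Since $a\ge 15$ we have $0<\delta\le e^{-5/2}$, and this is the only use of the hypothesis. Performing the substitution turns each of $\mathcal{L}[e^{a(x-1)}](\tfrac{x}{2})$, $\mathcal{L}[e^{a(x-1)}](\tfrac{x+1}{2})$, $\mathcal{H}[e^{a(x-1)}](\tfrac{x}{2})$, $\mathcal{H}[e^{a(x-1)}](\tfrac{x+1}{2})$ into a finite sum of monomials $c(x)\,w^{j}\delta^{k}$ with $c$ affine and rational, hence expresses
$$\mathcal{A}[e^{a(x-1)}](x)=M(x)\,w^{6}+R(x),$$
with $M(x)$ the $\delta$–free part and $R(x)$ the sum of the monomials carrying a positive power of $\delta$.

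The $\delta^{0}$ terms come only from $\tfrac13(\mathcal{L}g)^{2}$, $\tfrac{x}{3}(\mathcal{L}g)^{2}$ and $\tfrac13(\mathcal{L}g)(\mathcal{H}g)$ evaluated at $\tfrac{x+1}{2}$, each with leading term $\tfrac{x+7}{8}w^{3}$, so $M(x)=\frac{(x+2)(x+7)^{2}}{192}$. The two facts that drive everything are $M(1)=1$ and the convexity of $M$ on $[0,1]$ (indeed $M''(x)=\tfrac{6x+32}{192}>0$), the latter giving $M(x)\le 1-\tfrac{94}{192}(1-x)$. Since $e^{(a+\epsilon)(x-1)}=w^{6}e^{\epsilon(x-1)}$ and $w^{6}>0$, the claim is equivalent to $M(x)+R(x)/w^{6}\le e^{\epsilon(x-1)}$ on $[0,1]$; using $e^{\epsilon(x-1)}\ge 1+\epsilon(x-1)$ together with the secant bound for $M$, it suffices to show
$$\rho(x):=\frac{R(x)}{w^{6}}\ \le\ \Big(\frac{94}{192}-\epsilon\Big)(1-x)=\frac{211}{480}(1-x)\qquad\text{on }[0,1].$$

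For the estimate of $\rho$, one checks from the explicit operators that every monomial $c_{j,k}(x)w^{j}\delta^{k}$ of $R$ has $j\le 6$ and $j+k\ge 6$; hence $\rho(x)=\sum c_{j,k}(x)\,e^{\frac{a}{6}[(6-j)(1-x)-k]}$ with every exponent $\le 0$, so $\rho$ is uniformly bounded and every monomial with $j+k\ge 8$ carries a factor $e^{-a/3}$. Moreover the identity $\mathcal{A}[e^{a(x-1)}](1)=1$ (which holds for every $a$, hence every $\delta$) forces $\sum_{j}c_{j,k}(1)=0$ for each $k$, i.e. $\rho(1)=0$. I would then organize $R$ into its pieces $\rho_{k}$ of homogeneous degree $k$ in $\delta$; each satisfies $\rho_{k}(1)=0$, and the only two that are not uniformly $O(e^{-a/3})$ are $k=3$ and $k=5$ (the sole monomials of $R$ with $j+k=6$ being a $(3,3)$– and a $(1,5)$–monomial). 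Combining its $(3,3)$– and $(6,3)$–parts, the $k=3$ piece is
$$\rho_{3}(x)=\frac{(x+6)(x+1)}{192}\Big(8e^{-ax/2}-(x+7)e^{-a/2}\Big)\ \ge\ 0,$$
with $\rho_{3}(0)<\tfrac14$, $\rho_{3}(1)=0$, and the $k=5$ piece is similar with value $<\tfrac1{36}$ at $0$. One shows $\rho_{3}(x)\le\tfrac14(1-x)$ and $\rho_{5}(x)\le\tfrac1{36}(1-x)$ on $[0,1]$ (the ratio $\rho_{k}(x)/(1-x)$ being maximized at $x=0$), and for each of the remaining (small) $k$ bounds $\rho_{k}(x)\le\|\rho_{k}'\|_{\infty,[0,1]}(1-x)$, with $\|\rho_{k}'\|_{\infty}$ controlled — for $a\ge 15$ — by the $e^{-a/3}$–type suppression that survives one differentiation. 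Summing these contributions keeps the total below $\tfrac{211}{480}(1-x)$, which proves the proposition.

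I expect the main obstacle to be exactly this last summation: the estimate cannot be done monomial by monomial near $x=1$, since the "large" monomials such as $(3,3)$ do not individually vanish as $x\to 1$ — only the cancellation $\sum_{j}c_{j,k}(1)=0$, realized by pairing $(3,3)$ with $(6,3)$ and similarly within each $\rho_{k}$, exhibits the linear-in-$(1-x)$ decay that is forced by the fact that the available gap $e^{\epsilon(x-1)}-M(x)$ is itself only $\asymp\tfrac{8}{15}(1-x)$ near $x=1$. Differentiation reintroduces factors of order $a$, so some care is needed to see that the $e^{-a/3}$ suppression of the off-main monomials still wins; the budget $\tfrac{211}{480}$ against the actual size (about $\tfrac14$ from $\rho_{3}$ plus smaller terms) is comfortable but not lavish, which is what makes the conservative choice $\epsilon=\tfrac1{20}$ — well below the true first-order obstruction $M'(1)=\tfrac{7}{12}$ — the natural one.
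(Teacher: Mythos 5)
Your decomposition is correct as far as it goes, and it is a genuinely different route from the paper's. You keep all six terms of $\mathcal{A}$ and organize $\mathcal{A}[e^{a(x-1)}]$ by powers of $\delta=e^{-a/6}$; your $\delta$-free part $M(x)=\frac{(x+2)(x+7)^2}{192}$, your $k=3$ piece $\frac{(x+1)(x+6)}{192}\bigl(8e^{-ax/2}-(x+7)e^{-a/2}\bigr)$ and the analogous $k=5$ piece all check out against a direct expansion, the identity $\rho_k(1)=0$ (from $\mathcal{A}[e^{a(x-1)}](1)=1$ for every $a$) is a valid and rather elegant observation, and the reduction via the secant bound $M(x)\le 1-\frac{94}{192}(1-x)$ and $e^{\epsilon(x-1)}\ge 1-\epsilon(1-x)$ is sound. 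The paper instead discards the two negative terms of $\mathcal{A}$ outright (its inequality \eqref{Abound1}), bounds $\mathcal{L}[e^{a(x-1)}]$ and $\mathcal{H}[e^{a(x-1)}]$ by three-term expressions, and works with the explicit majorant $\Psi(x,a)$, whose first three terms are exactly your $M(x)e^{a(x-1)}$ and the positive halves of your $\rho_3$ and $\rho_5$. Because the negative terms are gone, no cancellation at $x=1$ is available there ($\Psi(1,a)>1$), so the paper handles a shrinking window $[1-a^{-9/4},1]$ by convexity of the probability generating function $\mathcal{A}[e^{a(x-1)}]$ and a chord argument, and then runs a derivative comparison on $[\frac12,1-a^{-9/4})$ and crude numerics on $[0,\frac12)$. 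Your cancellation-plus-mean-value-theorem treatment of the endpoint replaces that four-region analysis by a single linear comparison, which is arguably cleaner.

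The gap is exactly where you say it is: the quantitative closure is asserted, not established. The claims $\rho_3(x)\le\frac14(1-x)$ and $\rho_5(x)\le\frac1{36}(1-x)$ rest on the parenthetical ``the ratio is maximized at $x=0$,'' which is true but needs an argument (a crude bound fails near $x=1$, as you note, so you must use the negative term there), and the decisive step --- that the remaining pieces satisfy $\sum_{k\neq 3,5}\|\rho_k'\|_{\infty}\le\frac{211}{480}-\frac14-\frac1{36}\approx 0.16$ uniformly in $a\ge 15$ --- is left entirely to ``one shows.'' That bookkeeping is where all the work of the proposition lives, so as written this is a plan rather than a proof. For what it is worth, an audit suggests it does close with room: the only leftover monomials with $j+k=8$ are a $(4,4)$ term with coefficient $\frac{(x+1)(x+3)(x+7)}{96}+\frac{(x+3)(x+7)}{48}$ and a $(2,6)$ term with coefficient $\frac{(x+4)(3x+2)}{36}$, whose mean-value constants are of size roughly $\frac{a}{3}e^{-a/3}$ and $\frac{2a}{3}e^{-a/3}$ times coefficients of order one (a few hundredths at $a=15$), everything else being suppressed by $e^{-a/2}$ or better; and since all these bounds have the form $(\alpha+\beta a)e^{-\lambda a}$ with $\lambda\ge\frac16$, they are decreasing in $a$ on $[15,\infty)$, so a check at $a=15$ suffices --- a point you should state explicitly, since monotonicity in $a$ is what lets a single numerical verification cover all $a\ge 15$. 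With that enumeration and these monotonicity remarks written out, your argument would be a complete and somewhat shorter alternative to the paper's proof.
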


\begin{proof} The first step will be to define a simple expression $\Psi (x,a)$ to serve as an upper bound on $\mathcal{A}[e^{a(x-1)}]$ (for $a\geq 15$).  To start, note that$$\mathcal{A}[g](x)=\frac{x}{3}\mathcal{L}[g]\Big(\frac{x}{2}\Big)+\frac{x+1}{3}\Big(\mathcal{L}[g]\Big(\frac{x+1}{2}\Big)\Big)^2-\frac{x}{3}\mathcal{L}[g]\Big(\frac{x+1}{2}\Big)\mathcal{L}[g]\Big(\frac{x}{2}\Big)$$
$$+\frac{1}{3}\mathcal{H}[g]\Big(\frac{x}{2}\Big)+\frac{1}{3}\mathcal{L}[g]\Big(\frac{x+1}{2}\Big)\mathcal{H}[g]\Big(\frac{x+1}{2}\Big)-\frac{1}{3}\mathcal{L}[g]\Big(\frac{x+1}{2}\Big)\mathcal{H}[g]\Big(\frac{x}{2}\Big)$$

\begin{equation}\label{Abound1}
\leq \frac{x}{3}\mathcal{L}[g]\Big(\frac{x}{2}\Big)+\frac{x+1}{3}\Big(\mathcal{L}[g]\Big(\frac{x+1}{2}\Big)\Big)^2+\frac{1}{3}\mathcal{H}[g]\Big(\frac{x}{2}\Big)+\frac{1}{3}\mathcal{L}[g]\Big(\frac{x+1}{2}\Big)\mathcal{H}[g]\Big(\frac{x+1}{2}\Big)
\end{equation}

\medskip
\noindent
$\forall\ g\in\mathcal{S}$.  To bound \eqref{Abound1} above (for $g(x)=e^{a(x-1)}$) we'll first obtain upper bounds for $\mathcal{L}[e^{a(x-1)}]$ and $\mathcal{H}[e^{a(x-1)}]$ as follows:$$\mathcal{L}[e^{a(x-1)}]=\frac{x+3}{4}e^{a(x-1)}+2\cdot\frac{x+2}{4}\Big(e^{\frac{2a}{3}(x-2)}-e^{a(x-\frac{5}{3})}\Big)+\frac{x+1}{4}\Big(e^{\frac{a}{3}(x-3)}-2e^{\frac{2a}{3}(x-\frac{5}{2})}-e^{a(x-\frac{5}{3})}+2e^{a(x-2)}\Big)$$
Observing that for all $x\in [0, 1]$, $2\cdot\frac{x+2}{4}e^{a(x-\frac{5}{3})}\geq e^{-\frac{a}{3}}e^{\frac{2a}{3}(x-2)}$, $2\cdot\frac{x+1}{4}e^{\frac{2a}{3}(x-\frac{5}{2})}\geq\frac{1}{2}e^{-\frac{a}{3}}e^{\frac{2a}{3}(x-2)}$, and $\frac{x+1}{4}e^{a(x-\frac{5}{3})}\geq\frac{1}{4}e^{-\frac{a}{3}}e^{\frac{2a}{3}(x-2)}$, along with the fact that $2\cdot\frac{x+2}{4}e^{\frac{2a}{3}(x-2)}\leq\frac{3}{2}e^{\frac{2a}{3}(x-1)}$ and $2\cdot\frac{x+1}{4}e^{a(x-2)}\leq e^{-\frac{a}{3}}e^{\frac{2a}{3}(x-2)}$, we find that if we make the given substitutions in the expression for $\mathcal{L}[e^{a(x-1)}]$ above, it gives$$\mathcal{L}[e^{a(x-1)}]\leq\frac{x+3}{4}e^{a(x-1)}+\frac{x+1}{4}e^{\frac{a}{3}(x-3)}+ce^{\frac{2a}{3}(x-2)}$$(where $c=\frac{3}{2}-\frac{3}{4}e^{-\frac{a}{3}}$).  The above upper bound on $\mathcal{L}[e^{a(x-1)}]$ will be denoted as $l_a(x)$.  Now noting that$$\mathcal{H}[e^{a(x-1)}]=\frac{x+3}{4}e^{a(x-1)}+2\cdot\frac{x+2}{6}\Big(e^{\frac{2a}{3}(x-2)}-e^{a(x-\frac{5}{3})}\Big)+\frac{x+1}{12}\Big(e^{\frac{a}{3}(x-3)}-2e^{\frac{2a}{3}(x-\frac{5}{2})}-e^{a(x-\frac{5}{3})}+2e^{a(x-2)}\Big)$$applying a similar set of inequalities then gives the bound$$\mathcal{H}[e^{a(x-1)}]\leq\frac{x+3}{4}e^{a(x-1)}+\frac{x+1}{12}e^{\frac{a}{3}(x-3)}+de^{\frac{2a}{3}(x-2)}$$(where $d=1-\frac{7}{12}e^{-\frac{a}{3}}$).  This upper bound on $\mathcal{H}[e^{a(x-1)}]$ will be denoted as $h_a(x)$.

Combining the above bounds with \eqref{Abound1} we obtain the inequality$$\mathcal{A}[e^{a(x-1)}]\leq \frac{x}{3}l_a\Big(\frac{x}{2}\Big)+\frac{x+1}{3}l_a\Big(\frac{x+1}{2}\Big)^2+\frac{1}{3}h_a\Big(\frac{x}{2}\Big)+\frac{1}{3}l_a\Big(\frac{x+1}{2}\Big)h_a\Big(\frac{x+1}{2}\Big)$$Writing out this full expression gives the following:$$\mathcal{A}[e^{a(x-1)}]\leq \frac{x}{3}\Big(\frac{x+6}{8}e^{\frac{a}{2}(x-2)}+\frac{x+2}{8}e^{\frac{a}{6}(x-6)}+ce^{\frac{a}{3}(x-4)}\Big)+\frac{x+1}{3}\Big(\Big(\frac{x+7}{8}\Big)^2 e^{a(x-1)}+\Big(\frac{x+3}{8}\Big)^2e^{\frac{a}{3}(x-5)}$$
$$+c^2 e^{\frac{2a}{3}(x-3)}+2\cdot\frac{x+7}{8}\cdot\frac{x+3}{8}e^{\frac{2a}{3}(x-2)}+2\cdot\frac{x+7}{8}\cdot ce^{\frac{5a}{6}(x-\frac{9}{5})}+2\cdot\frac{x+3}{8}\cdot ce^{\frac{a}{2}(x-\frac{11}{3})}\Big)$$
$$+\frac{1}{3}\Big(\frac{x+6}{8}e^{\frac{a}{2}(x-2)}+\frac{x+2}{24}e^{\frac{a}{6}(x-6)}+de^{\frac{a}{3}(x-4)}\Big)+\frac{1}{3}\Big(\Big(\frac{x+7}{8}\Big)^2e^{a(x-1)}+\frac{x+3}{8}\cdot\frac{x+3}{24}e^{\frac{a}{3}(x-5)}$$
$$+cde^{\frac{2a}{3}(x-3)}+\frac{4}{3}\cdot\frac{x+7}{8}\cdot\frac{x+3}{8}e^{\frac{2a}{3}(x-2)}+(c+d)\frac{x+7}{8}e^{\frac{5a}{6}(x-\frac{9}{5})}+(\frac{c}{3}+d)\frac{x+3}{8}e^{\frac{a}{2}(x-\frac{11}{3})}\Big)$$

$$=\frac{x+2}{3}\Big(\frac{x+7}{8}\Big)^2 e^{a(x-1)}+\frac{x+1}{3}\cdot\frac{x+6}{8}e^{\frac{a}{2}(x-2)}+\frac{x+\frac{1}{3}}{3}\cdot\frac{x+2}{8}e^{\frac{a}{6}(x-6)}+e^{\frac{2a}{3}(x-2)}\Big(\frac{x}{3}\cdot ce^{-\frac{a}{3}x}+\frac{x+1}{3}\cdot\Big(\frac{x+3}{8}\Big)^2 $$
$$e^{-\frac{a}{3}(x+1)}+\frac{x+1}{3}\cdot c^2e^{-\frac{2a}{3}}+2\cdot\frac{x+1}{3}\cdot\frac{x+7}{8}\cdot\frac{x+3}{8}+2\cdot\frac{x+1}{3}\cdot\frac{x+7}{8}\cdot ce^{\frac{a}{6}(x-1)}+2\cdot\frac{x+1}{3}\cdot\frac{x+3}{8}\cdot ce^{-\frac{a}{6}(x+3)}$$
$$+\frac{d}{3}e^{-\frac{a}{3}x}+\Big(\frac{x+3}{24}\Big)^2e^{-\frac{a}{3}(x+1)}+\frac{c}{3}de^{-\frac{2a}{3}}+\frac{4}{9}\cdot\frac{x+7}{8}\cdot\frac{x+3}{8}+(c+d)\frac{x+7}{24}e^{\frac{a}{6}(x-1)}+\frac{1}{3}\Big(\frac{c}{3}+d\Big)\frac{x+3}{8}e^{-\frac{a}{6}(x+3)}\Big)$$

\medskip
\noindent
An upper bound for the long expression in parentheses above can be obtained by replacing $x$ with $1$ wherever it is part of an increasing expression (such as $\frac{x}{3}$ or $e^{ax}$) and replacing it with $0$ wherever it is part of a decreasing expression.  After simplifying, this gives the following inequality:$$\mathcal{A}[e^{a(x-1)}]\leq\frac{x+2}{3}\Big(\frac{x+7}{8}\Big)^2e^{a(x-1)}+\frac{x+1}{3}\cdot\frac{x+6}{8}e^{\frac{a}{2}(x-2)}+\frac{x+\frac{1}{3}}{3}\cdot\frac{x+2}{8}e^{\frac{a}{6}(x-6)}$$
$$+\Big(\frac{41}{9}-\frac{61}{36}e^{-\frac{a}{3}}+\frac{5}{4}e^{-\frac{a}{2}}+2e^{-\frac{2a}{3}}-\frac{23}{36}e^{-\frac{5a}{6}}-\frac{49}{24}e^{-a}+\frac{25}{48}e^{-\frac{4a}{3}}\Big)e^{\frac{2a}{3}(x-2)}$$

\medskip
\noindent
Note that for $a\geq 3$ the following string of inequalities holds$$\frac{41}{9}-\frac{61}{36}e^{-\frac{a}{3}}+\frac{5}{4}e^{-\frac{a}{2}}+2e^{-\frac{2a}{3}}-\frac{23}{36}e^{-\frac{5a}{6}}-\frac{49}{24}e^{-a}+\frac{25}{48}e^{-\frac{4a}{3}}\leq\frac{41}{9}-\frac{61}{36}e^{-\frac{a}{3}}+e^{-\frac{a}{3}}\Big(\frac{5}{4}e^{-\frac{a}{6}}+2e^{-\frac{a}{3}}\Big)\leq\frac{41}{9}$$Hence, we now finally define $\Psi(x,a)$ to be$$\Psi(x,a)=\frac{x+2}{3}\Big(\frac{x+7}{8}\Big)^2e^{a(x-1)}+\frac{x+1}{3}\cdot\frac{x+6}{8}e^{\frac{a}{2}(x-2)}+\frac{x+\frac{1}{3}}{3}\cdot\frac{x+2}{8}e^{\frac{a}{6}(x-6)}+\frac{41}{9}e^{\frac{2a}{3}(x-2)}$$From the above computations, it follows that $\mathcal{A}[e^{a(x-1)}]\leq\Psi(x,a)$ on $[0,1]$ for $a\geq 15$ as desired (though as we saw above, having $a\geq 3$ is sufficient for this inequality to hold).

Now that $\Psi(x, a)$ has been defined, we'll proceed to prove the proposition by splitting up the interval $[0, 1]$ into four parts, and showing that the inequality stated in the proposition holds for all $x$ in each one of them.

\medskip
\noindent
(i) $x\in [1-c(a), 1]$ (where $c(a)=a^{-\frac{9}{4}}$).

\medskip
\noindent
Since $\mathcal{A}[e^{a(x-1)}]$ is a convex function of $x$ (this follows from it being a probability generating function), this means that for any $c\in [0,1]$ we have $\mathcal{A}[e^{a(x-1)}]\leq\mathcal{A}[e^{a(c-1)}]+\Big(1-\mathcal{A}[e^{a(c-1)}]\Big)\Big(\frac{x-c}{1-c}\Big)\ \forall\ x\in [c,1]$.  Using the fact that $\mathcal{A}[e^{a(x-1)}]\leq\Psi(x,a)$ (for $a\geq 15$), it follows that $\mathcal{A}[e^{a(x-1)}]\leq\Psi(c,a)+\Big(1-\Psi(c,a)\Big)\Big(\frac{x-c}{1-c}\Big)$ on $[c,1]$.  Noting that $e^{(a+\frac{1}{20})(x-1)}$ is itself a convex function of $x$ that has derivative $a+\frac{1}{20}$ at $x=1$, it follows that $e^{(a+\frac{1}{20})(x-1)}\geq 1-(a+\frac{1}{20})(1-x)$ on $[0,1]$.  Putting these last two observations together, we find that if we can establish that
\begin{equation}\label{psbound}
\Psi(1-c(a),a)\leq 1-(a+\frac{1}{20})(1-(1-c(a)))
\end{equation}
then it will follow that$$\mathcal{A}[e^{a(x-1)}]\leq 1-\Big(a+\frac{1}{20}\Big)\Big(1-(1-c(a))\Big)+\Big(a+\frac{1}{20}\Big)\Big(1-(1-c(a))\Big)\Big(\frac{x-(1-c(a))}{1-(1-c(a))}\Big)$$
$$=1-\Big(a+\frac{1}{20}\Big)\Big(1-x\Big)\leq e^{(a+\frac{1}{20})(x-1)}$$
for all $x\in [1-c(a),1]$.

Now using the formula for $\Psi$, we get the string of inequalities$$\Psi(1-c(a),a)\leq\Big(1-\frac{c(a)}{3}\Big)\Big(1-\frac{c(a)}{8}\Big)^2 e^{-ac(a)}+\frac{7}{12}e^{-\frac{a}{2}}+\frac{1}{6}e^{-\frac{5a}{6}}+\frac{41}{9}e^{-\frac{2a}{3}}$$
$$\leq e^{-(a+\frac{7}{12})c(a)}+\frac{7}{12}e^{-\frac{a}{2}}+\frac{85}{18}e^{-\frac{2a}{3}}\leq 1-\Big(a+\frac{7}{12}\Big)c(a)+\frac{13}{24}a^2 c(a)^2+\frac{7}{12}e^{-\frac{a}{2}}+\frac{85}{18}e^{-\frac{2a}{3}}$$(where the last inequality follows from the fact that $e^{-x}\leq 1-x+\frac{x^2}{2}$ for $x\in [0,1]$, and the fact that $\Big(a+\frac{7}{12}\Big)^2\leq\frac{13}{12}a^2$ for $a\geq 15$).  Plugging $c(a)=a^{-\frac{9}{4}}$ into the above expression then gives$$\Psi(1-c(a),a)\leq 1-\Big(a+\frac{7}{12}\Big)c(a)+\Big(\frac{13}{24}a^{-\frac{1}{4}}+\frac{7}{12}a^{\frac{9}{4}}e^{-\frac{a}{2}}+\frac{85}{18}a^{\frac{9}{4}}e^{-\frac{2a}{3}}\Big)c(a)$$Now to establish \eqref{psbound} it just needs to be shown that
\begin{equation}\label{bound2}
\frac{13}{24}a^{-\frac{1}{4}}+\frac{7}{12}a^{\frac{9}{4}}e^{-\frac{a}{2}}+\frac{85}{18}a^{\frac{9}{4}}e^{-\frac{2a}{3}}\leq\frac{7}{12}-\frac{1}{20}
\end{equation}
for $a\geq 15$.  So observe the string of inequalities below (which holds for $a\geq\frac{9}{2}$), where the left side is equal to the derivative of the left side of \eqref{bound2}.$$-\frac{13}{96}a^{-\frac{5}{4}}+\frac{9}{4}a^{\frac{5}{4}}\Big(\frac{7}{12}e^{-\frac{a}{2}}+\frac{85}{18}e^{-\frac{2a}{3}}\Big)-a^{\frac{9}{4}}\Big(\frac{1}{2}\cdot\frac{7}{12}e^{-\frac{a}{2}}+\frac{2}{3}\cdot\frac{85}{18}e^{-\frac{2a}{3}}\Big)<\Big(\frac{9}{4}a^{\frac{5}{4}}-\frac{1}{2}a^{\frac{9}{4}}\Big)\Big(\frac{7}{12}e^{-\frac{a}{2}}+\frac{85}{18}e^{-\frac{2a}{3}}\Big)<0$$Combining this with the fact that the left side of \eqref{bound2} equals $.513<\frac{7}{12}-\frac{1}{20}$ at $a=15$, we find that \eqref{bound2} does indeed hold for $a\geq 15$ which, as was shown, implies that $\mathcal{A}[e^{a(x-1)}]\leq e^{(a+\frac{1}{20})(x-1)}$ on $[1-c(a),1]$.

\medskip
\noindent
(ii) $x\in [\frac{1}{2},1-c(a))$.

\medskip
\noindent
Denoting $e^{-a(x-1)}\Psi(x,a)$ as $Q(x,a)$ (for $a\geq 15$), it suffices to show that $Q(x,a)\leq e^{\frac{1}{20}(x-1)}$ on $[\frac{1}{2},1-c(a))$.  Since we saw in (i) that $\Psi(1-c(a),a)\leq 1-\Big(a+\frac{1}{20}\Big)c(a)\leq e^{(a+\frac{1}{20})((1-c(a))-1)}$, it follows that $Q(1-c(a),a)\leq e^{\frac{1}{20}((1-c(a))-1)}$, which implies that to prove $Q(x,a)\leq e^{\frac{1}{20}(x-1)}$, it suffices to prove that the right side of$$\frac{\partial \Big(e^{\frac{1}{20}(x-1)}\Big)}{\partial x}\leq\frac{1}{20}\leq\frac{\partial Q(x,a)}{\partial x}$$holds on $[\frac{1}{2},1-c(a))$.  Computing the formula for the expression on the right, we get$$\frac{\partial Q(x,a)}{\partial x}=\frac{1}{3}\Big(\frac{x+7}{8}\Big)^2+\frac{1}{4}\cdot\frac{x+2}{3}\cdot\frac{x+7}{8}+\frac{1}{3}\cdot\frac{x+6}{8}e^{-\frac{a}{2}x}+\frac{1}{8}\cdot\frac{x+1}{3}e^{-\frac{a}{2}x}-\frac{a}{2}\cdot\frac{x+1}{3}\cdot\frac{x+6}{8}e^{-\frac{a}{2}x}$$
$$+\frac{1}{3}\cdot\frac{x+2}{8}e^{-\frac{5a}{6}x}+\frac{1}{8}\cdot\frac{x+\frac{1}{3}}{3}e^{-\frac{5a}{6}x}-\frac{5a}{6}\cdot\frac{x+\frac{1}{3}}{3}\cdot\frac{x+2}{8}e^{-\frac{5a}{6}x}-\frac{a}{3}\cdot\frac{41}{9}e^{-\frac{a}{3}(x+1)}$$
$$\geq\frac{1}{3}\Big(\frac{x+7}{8}\Big)^2+\frac{1}{4}\cdot\frac{x+2}{3}\cdot\frac{x+7}{8}-\frac{a}{2}\cdot\frac{x+1}{3}\cdot\frac{x+6}{8}e^{-\frac{a}{2}x}-\frac{5a}{6}\cdot\frac{x+\frac{1}{3}}{3}\cdot\frac{x+2}{8}e^{-\frac{5a}{6}x}-\frac{a}{3}\cdot\frac{41}{9}e^{-\frac{a}{3}(x+1)}$$Plugging in $x=\frac{1}{2}$ for the exponential functions and the polynomial expressions that follow a $'+'$, and $x=1$ for the polynomial expressions that follow a $'-'$, we find that the expression on the right side of the inequality is greater than or equal to$$\frac{1}{3}\Big(\frac{15}{16}\Big)^2+\frac{1}{4}\cdot\frac{5}{6}\cdot\frac{15}{16}-\frac{a}{2}\cdot\frac{2}{3}\cdot\frac{7}{8}e^{-\frac{a}{4}}-\frac{5a}{6}\cdot\frac{4}{9}\cdot\frac{3}{8}e^{-\frac{5a}{12}}-\frac{a}{3}\cdot\frac{41}{9}e^{-\frac{a}{2}}$$on $[\frac{1}{2},1-c(a))$.  Simplifying, and using the string of inequalities above, gives
\begin{equation}\label{derivqb}\frac{\partial Q(x,a)}{\partial x}\geq \frac{125}{256}-\frac{7a}{24}e^{-\frac{a}{4}}-\frac{5a}{36}e^{-\frac{5a}{12}}-\frac{41a}{27}e^{-\frac{a}{2}}\end{equation}
on this interval.  If we differentiate this expression with respect to $a$ we get$$\Big(\frac{a}{4}-1\Big)\cdot\frac{7}{24}e^{-\frac{a}{4}}+\Big(\frac{5a}{12}-1\Big)\cdot\frac{5}{36}e^{-\frac{5a}{12}}+\Big(\frac{a}{2}-1\Big)\cdot\frac{41}{27}e^{-\frac{a}{2}}\geq 0$$(recall we're assuming $a\geq 15$).  Coupling this with the fact that the expression on the right side of \eqref{derivqb}, when evaluated at $a=15$, is equal to $.369>\frac{1}{20}$, we indeed find that $\frac{\partial Q(x,a)}{\partial x}\geq\frac{1}{20}$ on $[\frac{1}{2},1-c(a))$ for $a\geq 15$.  As was shown, this implies that $Q(x,a)\leq e^{\frac{1}{20}(x-1)}$, which implies $\mathcal{A}[e^{a(x-1)}]\leq e^{(a+\frac{1}{20})(x-1)}$ on $[\frac{1}{2},1-c(a))$ for $a\geq 15$ as desired.

\medskip
\noindent
(iii) $x\in[\frac{1}{8}, \frac{1}{2})$.

\medskip
\noindent
Once again it suffices to show that $Q(x,a)\leq e^{\frac{1}{20}(x-1)}$ (this time on $[\frac{1}{8},\frac{1}{2})$).  Taking the formula for $Q(x,a)=e^{-a(x-1)}\Psi(x,a)$ and substituting $\frac{1}{2}$ for $x$ when it is part of a polynomial function, and $\frac{1}{8}$ when it is part of an exponential expression (with negative exponent), we find that$$Q(x,a)\leq\frac{375}{512}+\frac{13}{32}e^{-\frac{a}{16}}+\frac{25}{288}e^{-\frac{5a}{48}}+\frac{41}{9}e^{-\frac{3a}{8}}$$ for $x\in[\frac{1}{8},\frac{1}{2})$.  Since the expression on the right is a decreasing function of $a$, plugging in $a=15$ shows that$$Q(x,a)\leq\frac{375}{512}+\frac{13}{32}e^{-\frac{15}{16}}+\frac{25}{288}e^{-\frac{25}{16}}+\frac{41}{9}e^{-\frac{45}{8}}\approx .926<e^{\frac{1}{20}(\frac{1}{8}-1)}\leq e^{\frac{1}{20}(x-1)}$$on $[\frac{1}{8},\frac{1}{2})$ for $a\geq 15$, thus giving the desired inequality.

\medskip
\noindent
(iv) $x\in[0,\frac{1}{8})$.

\medskip
\noindent
Using the exact same method that was used in (iii), but plugging in $0$ and $\frac{1}{8}$ in place of $\frac{1}{8}$ and $\frac{1}{2}$ respectively, we find that$$Q(x,a)\leq\frac{17}{24}\Big(\frac{57}{64}\Big)^2+\frac{3}{8}\cdot\frac{49}{64}+\frac{11}{72}\cdot\frac{17}{64}+\frac{41}{9}e^{-5}\approx .9203<e^{-\frac{1}{20}}\leq e^{\frac{1}{20}(x-1)}$$on $[0,\frac{1}{8})$ for $a\geq 15$, once again yielding the desired inequality.

\medskip
\noindent
Combining parts (i)-(iv) we find that $\mathcal{A}[e^{a(x-1)}]\leq e^{(a+\frac{1}{20})(x-1)}$ does hold on $[0,1]$ for $a\geq 15$, thus completing the proof of the proposition.
\end{proof}

\medskip
\begin{cor}\label{cor:Aboundind} If $a\geq 15$ and $n\geq 1$ then $\mathcal{A}^n[e^{a(x-1)}]\leq e^{(a+n\epsilon)(x-1)}$ (where $\epsilon=\frac{1}{20}$).
\end{cor}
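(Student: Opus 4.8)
The plan is a straightforward induction on $n$, with Proposition \ref{prop:uppermonot} serving simultaneously as the base case and as the driving step, and with the monotonicity of $\mathcal{A}$ (Proposition \ref{prop:monotonicity}) used to push the inductive hypothesis through one more application of $\mathcal{A}$. The base case $n=1$ is exactly Proposition \ref{prop:uppermonot}, since $a\geq 15$.

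For the inductive step, I would assume $\mathcal{A}^n[e^{a(x-1)}]\leq e^{(a+n\epsilon)(x-1)}$ on $[0,1]$ and first check that both sides lie in the class $\mathcal{S}$, so that Proposition \ref{prop:monotonicity} may be applied. The right-hand side is the probability generating function of a $\mathrm{Poiss}(a+n\epsilon)$ random variable, hence lies in $\mathcal{S}$; the left-hand side is obtained from $e^{a(x-1)}\in\mathcal{S}$ by applying $\mathcal{A}$ a total of $n$ times, and $\mathcal{AS}\subseteq\mathcal{S}$ by Lemma \ref{lemma:S closed}, so it too lies in $\mathcal{S}$. Proposition \ref{prop:monotonicity} then gives $\mathcal{A}^{n+1}[e^{a(x-1)}]=\mathcal{A}\bigl(\mathcal{A}^n[e^{a(x-1)}]\bigr)\leq\mathcal{A}\bigl[e^{(a+n\epsilon)(x-1)}\bigr]$ on $[0,1]$.

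To finish, observe that $a+n\epsilon\geq a\geq 15$, so Proposition \ref{prop:uppermonot} applies with its parameter taken to be $a+n\epsilon$, yielding $\mathcal{A}\bigl[e^{(a+n\epsilon)(x-1)}\bigr]\leq e^{(a+n\epsilon+\epsilon)(x-1)}=e^{(a+(n+1)\epsilon)(x-1)}$ on $[0,1]$. Chaining this with the inequality from the previous paragraph completes the inductive step, and hence the proof. There is no genuine obstacle here; the only point requiring care is the verification that the iterates $\mathcal{A}^n[e^{a(x-1)}]$ remain inside $\mathcal{S}$, which is exactly what is needed to legitimately invoke Proposition \ref{prop:monotonicity}, and which is supplied by Lemma \ref{lemma:S closed}.
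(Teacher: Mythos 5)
Your proposal is correct and takes essentially the same route as the paper: induction with Proposition \ref{prop:uppermonot} as the base case and, since $a+n\epsilon\geq 15$, also as the inductive driving step, combined with the monotonicity of $\mathcal{A}$ from Proposition \ref{prop:monotonicity}. Your explicit check via Lemma \ref{lemma:S closed} that the iterates $\mathcal{A}^n[e^{a(x-1)}]$ remain in $\mathcal{S}$ is a detail the paper leaves implicit, but the argument is otherwise identical.
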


\begin{proof}
We know from the previous result that the statement holds for $n=1$.  Now assume it holds for some $n\geq 1$.  Then by the monotonicity of $\mathcal{A}$ on $\mathcal{S}$ (established in Proposition \ref{prop:monotonicity}), along with Proposition \ref{prop:uppermonot}, it follows that$$\mathcal{A}^{n+1}[e^{a(x-1)}]=\mathcal{A}\Big[\mathcal{A}^n[e^{a(x-1)}]\Big]\leq\mathcal{A}[e^{(a+n\epsilon)(x-1)}]\leq e^{(a+(n+1)\epsilon)(x-1)}$$on $[0,1]$.  By induction we then find that $\mathcal{A}^n[e^{a(x-1)}]\leq e^{(a+n\epsilon)(x-1)}$ on $[0,1]$ for all $n\geq 1$.
\end{proof}

Having proven Proposition \ref{prop:uppermonot} and it's corollary, our last significant task is to establish the following result.

\begin{prop}\label{prop:lowermonot} There exists a finite sequence $0=a_0<a_1<\dots<a_N$ (with $a_N\geq 15$) such that $\mathcal{A}^n 1\leq e^{a_n(x-1)}$ on $[0,1]$ for all $n$ with $0\leq n\leq N$.
\end{prop}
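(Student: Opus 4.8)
The plan is to construct $\{a_n\}$ recursively. Put $a_0=0$, so that $\mathcal{A}^0 1=1=e^{a_0(x-1)}$. Suppose $0=a_0<\dots<a_n$ have been found with $\mathcal{A}^k 1\le e^{a_k(x-1)}$ on $[0,1]$ for all $k\le n$. Since $e^{a_n(x-1)}$ is the probability generating function of a $\mathrm{Poiss}(a_n)$ variable it lies in $\mathcal{S}$, so by Lemma \ref{lemma:S closed} and the monotonicity of $\mathcal{A}$ on $\mathcal{S}$ (Proposition \ref{prop:monotonicity}) we get $\mathcal{A}^{n+1}1=\mathcal{A}[\mathcal{A}^n 1]\le\mathcal{A}[e^{a_n(x-1)}]$ on $[0,1]$. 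Hence the whole construction reduces to two tasks: (a) for each relevant $a$, produce $a'>a$ with $\mathcal{A}[e^{a(x-1)}]\le e^{a'(x-1)}$ on $[0,1]$; and (b) show that iterating (a) starting from $a_0=0$ reaches a value $\ge 15$ after finitely many steps.

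For task (a) I would imitate the region decomposition from the proof of Proposition \ref{prop:uppermonot}. On the short interval $[1-c(a),1]$, with $c(a)=a^{-9/4}$, use that $\mathcal{A}[e^{a(x-1)}]$ is convex with value $1$ at $x=1$ (it is a probability generating function), so on that interval it lies below the secant line joining $(1-c(a),u)$ to $(1,1)$ for any numerical upper bound $u$ on the value of $\mathcal{A}[e^{a(x-1)}]$ at $x=1-c(a)$; comparing this secant with the tangent line $1+a'(x-1)$ of $e^{a'(x-1)}$ at $x=1$ shows that any $a'\le(1-u)/c(a)$ works there. The bound $u$ is supplied by the explicit elementary bound $\Psi(x,a)$ of Proposition \ref{prop:uppermonot} when $a\ge 3$, and by direct evaluation of $\mathcal{A}[e^{a(x-1)}]$ (a polynomial when $a=0$, an elementary function in general) otherwise. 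On the complementary interval $[0,1-c(a)]$ one instead verifies $\mathcal{A}[e^{a(x-1)}]\le e^{a'(x-1)}$ directly, using $\Psi(x,a)$ as an upper bound on the left side when $a\ge 3$; since every function in sight is smooth with an a priori computable Lipschitz constant on $[0,1]$, this amounts to checking an inequality between explicit elementary functions on a sufficiently fine grid. The numerical search for an admissible $a'$, together with the grid certification, is where the Python program enters.

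Task (b) is carried out by running this recursion explicitly: the program outputs a concrete finite list $0=a_0<a_1<\dots<a_N$ with $a_N\ge 15$, each step's inequality $\mathcal{A}[e^{a_n(x-1)}]\le e^{a_{n+1}(x-1)}$ certified. One expects the list to terminate because the per-step gain $a_{n+1}-a_n$ stays bounded below by a positive constant while $a_n\in[0,15]$: near $x=1$ this gain is governed by the excess of the output mean $\frac{d}{dx}\mathcal{A}[e^{a(x-1)}]\big|_{x=1}$ over $a$, and away from $x=1$ by the size of the gap between $\mathcal{A}[e^{a(x-1)}]$ and the constant function $1$; both are verified (numerically, in the course of the recursion) to be positive and not too small on $[0,15]$.

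The step I expect to be the main obstacle is exactly this termination: ruling out the scenario in which the recursion creeps up toward some limit $a^\ast<15$, i.e.\ in which a Poisson generating function of mean below $15$ is a super-solution (i.e.\ $\mathcal{A}g\le g$). This is where $\mathbb{T}_{3,2}$ is genuinely close to criticality, and it is the reason for the specific threshold $15$ and the specific increment $\epsilon=\frac{1}{20}$ of Proposition \ref{prop:uppermonot}; making it rigorous requires quantitative bookkeeping of the gains across the whole range $[0,15]$, and hence computer assistance. Once this proposition is established, Corollary \ref{cor:Aboundind} upgrades the finite list to an infinite sequence $\{a_n\}_{n\ge 0}$ with $a_n\to\infty$ and $\mathcal{A}^n 1\le e^{a_n(x-1)}$ on $[0,1]$ for every $n$; letting $n\to\infty$ yields $\mathcal{A}^n 1\to 0$ on $[0,1)$, hence $\mathcal{A}^n f\to 0$ there, hence (since $\mathcal{A}f=f$) $f\equiv 0$ on $[0,1)$, which is Theorem \ref{theorem:neat1}.
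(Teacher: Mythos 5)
Your overall strategy is the same as the paper's: set $a_0=0$, use Lemma \ref{lemma:S closed} and the monotonicity of $\mathcal{A}$ (Proposition \ref{prop:monotonicity}) to reduce the induction step to certifying a single inequality $\mathcal{A}[e^{a_n(x-1)}]\leq e^{a_{n+1}(x-1)}$ on $[0,1]$, and let a computer produce the explicit finite list that passes $15$. Where you differ is in the certification of each step. You recycle the machinery of Proposition \ref{prop:uppermonot} --- the elementary bound $\Psi(x,a)$ (valid for $a\geq 3$) plus a secant-versus-tangent comparison on $[1-c(a),1]$, and a Lipschitz-constant grid check on the remaining interval --- whereas the paper proves a short convexity lemma (Lemma \ref{lemma:lemconv}): if $f_1$ is convex, increasing and differentiable, $f_2$ is convex and increasing, $f_1(1)=f_2(1)$, and $f_2(c_{j+1})\leq f_1(c_j)-(c_j-c_{j+1})f_1'(c_j)$ along a finite grid $1=c_0>\dots>c_n=0$, then $f_1\geq f_2$ on all of $[0,1]$. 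Applying this with $f_1=e^{(u+a)(x-1)}$ and $f_2=\mathcal{A}[e^{u(x-1)}]$ (convex and increasing automatically, since it is a probability generating function by Lemma \ref{lemma:S closed}), with the grid $c_i=\frac{256-i}{256}$ and candidate increments $a\in\{\frac{1}{16},\frac{1}{32},\frac{3}{256}\}$, the program only needs to evaluate two explicit elementary functions at the grid points, with interval arithmetic absorbing rounding; no Lipschitz constants, no $\Psi$, no case split in $a$, and no separate treatment of a neighborhood of $x=1$ are required, since convexity alone upgrades the finitely many tangent-line checks to the global inequality. Your scheme would also work in principle (the Lipschitz constants are computable from the explicit formula for $\mathcal{A}[e^{a(x-1)}]$, and your secant argument near $x=1$ is sound, though $c(a)=a^{-9/4}$ must be replaced by something sensible for small $a$), but it is computationally and expositionally heavier. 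One further remark: the ``termination'' issue you single out as the main obstacle is not resolved, either in the paper or in a correct reading of your own plan, by any a priori lower bound on the per-step gain; the rigorous content consists solely of the certified per-step inequalities, and the fact that the recursion reaches $a_N=15.203125$ after $340$ steps is simply observed from the program's output, which is all the proposition requires.
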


\noindent
The proof of Proposition \ref{prop:lowermonot} will make use of the following lemma.

\begin{lemma}\label{lemma:lemconv} Let $f_1$ and $f_2$ be convex increasing functions on $[0,1]$ where $f_1$ is differentiable and $f_1(1)=f_2(1)$.  Suppose there is a finite sequence $1=c_0>c_1>\dots>c_n=0$ that satisfies
\begin{equation}\label{convrel}
f_2(c_{j+1})\leq f_1(c_j)-(c_j-c_{j+1})f'_1(c_j)
\end{equation}
for all $j$ with $0\leq j<n$.  Then $f_1(x)\geq f_2(x)\ \forall\ x\in[0,1]$.
\end{lemma}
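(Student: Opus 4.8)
The plan is to prove the lemma by a finite downward induction along the given sequence $c_0>c_1>\dots>c_n$. Concretely, I would establish by induction on $j$, for $0\le j\le n$, the statement $P(j)$: \emph{$f_1(x)\ge f_2(x)$ for every $x\in[c_j,1]$}. Since $c_n=0$, the case $j=n$ of $P(j)$ is exactly the conclusion of the lemma. The base case $P(0)$ is immediate, as $[c_0,1]=\{1\}$ and $f_1(1)=f_2(1)$ by hypothesis.

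For the inductive step I would assume $P(j)$ and extend the inequality to the subinterval $[c_{j+1},c_j]$, after which $P(j+1)$ follows by combining with $P(j)$. The tool is the two standard faces of convexity. On one hand, since $f_1$ is convex and differentiable, its graph lies above the tangent line $T_j(x):=f_1(c_j)+(x-c_j)f_1'(c_j)$, so $f_1\ge T_j$ on $[0,1]$. On the other hand, since $f_2$ is convex, on $[c_{j+1},c_j]$ its graph lies below the chord $L_j$ through $(c_{j+1},f_2(c_{j+1}))$ and $(c_j,f_2(c_j))$, so $f_2\le L_j$ there. Hence it suffices to show $T_j(x)\ge L_j(x)$ on $[c_{j+1},c_j]$; as $T_j$ and $L_j$ are both affine, this reduces to checking the two endpoints. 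At $x=c_j$ one has $T_j(c_j)=f_1(c_j)\ge f_2(c_j)=L_j(c_j)$, the middle inequality being $P(j)$ evaluated at $c_j$. At $x=c_{j+1}$ one has $T_j(c_{j+1})=f_1(c_j)-(c_j-c_{j+1})f_1'(c_j)\ge f_2(c_{j+1})=L_j(c_{j+1})$, which is precisely the hypothesis \eqref{convrel}. Chaining $f_1\ge T_j\ge L_j\ge f_2$ on $[c_{j+1},c_j]$ completes the step, and the induction yields the claim.

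I do not anticipate a genuine obstacle; the only subtlety is conceptual rather than computational. Verifying \eqref{convrel} merely at the sampled points $c_j$ does not by itself control $f_1-f_2$ between consecutive points, and it is exactly the pairing of ``$f_1$ lies above its tangent at $c_j$'' with ``$f_2$ lies below its chord on $[c_{j+1},c_j]$'' that closes this gap, since on each subinterval the problem collapses to comparing two affine functions at their endpoints — one endpoint handled by \eqref{convrel}, the other by the inductive hypothesis. I would also note that the monotonicity of $f_1$ and $f_2$ is not actually needed here: convexity of both together with $f_1(1)=f_2(1)$ suffices, so I will simply not invoke it, while retaining it in the statement since that is the form in which the lemma gets applied in Proposition \ref{prop:lowermonot}.
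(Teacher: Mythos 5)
Your proof is correct and follows essentially the same route as the paper: a finite induction along the $c_j$'s, bounding $f_1$ below by its tangent at $c_j$ and $f_2$ above by its chord on $[c_{j+1},c_j]$, and comparing the two affine functions at the endpoints via the inductive hypothesis and \eqref{convrel}. Your side remark that monotonicity of $f_1,f_2$ is not actually used is accurate but does not change the argument.
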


\begin{proof}
Assume $f_1(c_j)\geq f_2(c_j)$ for some $j<n$.  We know by the convexity (and differentiability) of $f_1$ that $f_1(t)\geq f_1(c_j)-f'_1(c_j)(c_j-t)$ for $t\in[c_{j+1},c_j]$.  By the convexity of $f_2$ it follows that$$f_2(t)\leq f_2(c_j)-\frac{f_2(c_j)-f_2(c_{j+1})}{c_j-c_{j+1}}(c_j-t)\leq f_1(c_j)-f'_1(c_j)(c_j-t)\leq f_1(t)$$for $t\in[c_{j+1},c_j]$ (where the middle inequality follows from $f_1(c_j)\geq f_2(c_j)$, \eqref{convrel}, and the fact that both functions are linear).  Since $f_1(1)\geq f_2(1)$, it follows by induction that $f_1(t)\geq f_2(t)\ \forall\ t\in[0,1]$.
\end{proof}

\begin{proof}[Proof of Proposition \ref{prop:lowermonot}] Let $u\geq 0$, $a>0$, and $c_i=\frac{256-i}{256}$ for $0\leq i\leq 256$.  Recalling that $\mathcal{A}[e^{u(x-1)}]$ is a probability generating function (implying it is increasing and convex on $[0,1]$) and noting that $e^{(u+a)(x-1)}$ is increasing, convex, and differentiable on $[0,1]$, along with the fact that the two functions both equal $1$ at $x=1$, we find that if \eqref{convrel} holds for each $i$ with $0\leq i<256$ (where $f_1(x)=e^{(u+a)(x-1)}$ and $f_2(x)=\mathcal{A}[e^{u(x-1)}]$), then it will follow from Lemma \ref{lemma:lemconv} that $\mathcal{A}[e^{u(x-1)}]\leq e^{(u+a)(x-1)}$ on $[0,1]$.  Now observe the attached Python program.  For each pass through the while loop (see line 45) it checks to see if \eqref{convrel} holds (at each $c_i$) for $a=\frac{1}{16}$, $f_1(x)=e^{(u+a)(x-1)}$, and $f_2(x)=\mathcal{A}[e^{u(x-1)}]$.  If \eqref{convrel} does hold at each $c_i$ then $u$ is increased by $\frac{1}{16}$ and we repeat the process with the new values of $u$, $f_1$, and $f_2$.  If not, $a$ is set to $\frac{1}{32}$ and it tests to see if \eqref{convrel} holds for each $i$ for this value of $a$.  If so, $u$ is increased by $\frac{1}{32}$ and the process is repeated for the new $u$, $f_1$, and $f_2$ (again starting with $a=\frac{1}{16}$).  If not, it tests again with $a=\frac{3}{256}$.  If \eqref{convrel} holds at each $c_i$ then the process repeats with $u$, $f_1$, and $f_2$ adjusted accordingly.  If not, then the while loop terminates.  The loop keeps running until either it terminates (as described above) because \eqref{convrel} fails to hold at some $c_i$ for $a$ equal to each of the three specified values ($\frac{1}{16}$, $\frac{1}{32}$, and $\frac{3}{256}$), or because $m=341$ (i.e. we've passed through the loop $340$ times).  In order to ensure that the program does not return a false negative (as a result of rounding) when evaluating the inequality on line 50, interval arithmetic is employed (see $\text{https://en.wikipedia.org/wiki/Interval\_arithmetic}$ for a definition) so that, for each $a, u, i$ combination that is considered, the loop only fails to break if $A$ (an interval containing the precise value of $f_1(c_j)-(c_j-c_{j+1})f'_1(c_j)$) lies entirely to the right of $B$ (an interval containing the precise value of $f_2(c_{j+1})$). At the end, the program prints the final values of $m$ and $u$.  Upon running the program you will find that these values are $341$ and $15.203125$ respectively (the program prints the current value of $m$ as it runs, and should take about eight minutes to finish).

Now for $0\leq n\leq 340$ let $a_n$ represent the value taken by $u$ following the nth pass through the loop.  Hence, $0=a_0<a_1<\dots<a_{340}=15.203125$ and $a_{j+1}-a_j\in\left\{\frac{1}{16},\frac{1}{32},\frac{3}{256}\right\}$ for each $0\leq j<340$.  Furthermore, since the program output indicates that $340$ passes through the loop were completed, this implies that \eqref{convrel} holds (at each $c_i$ for $0\leq i<256$) for each $0\leq j\leq 340$ (where $f_1(x)=e^{a_{j+1}(x-1)}$ and $f_2(x)=\mathcal{A}[e^{a_j(x-1)}]$).  By Lemma \ref{lemma:lemconv}, this implies that $\mathcal{A}[e^{a_j(x-1)}]\leq e^{a_{j+1}(x-1)}$ on $[0,1]$ for every $0\leq j<340$.  It then follows from the same induction argument that was used to prove Corollary \ref{cor:Aboundind} that $\mathcal{A}^n 1\leq e^{a_n(x-1)}$ for every $n$ with $0\leq n\leq 340$.  Hence, we find that the $a_n$ terms satisfy the conditions given in the statement of the proposition.  Hence, the proof is complete.
\end{proof}

With Proposition \ref{prop:lowermonot} established, the proof of Theorem \ref{theorem:neat1} can now be completed.

\begin{proof}[Proof of Theorem \ref{theorem:neat1}] Proposition \ref{prop:lowermonot} and Corollary \ref{cor:Aboundind} together indicate that $\mathcal{A}^n 1\rightarrow 0$ on $[0,1)$ as $n\rightarrow\infty$.  Since the monotonicity of $\mathcal{A}$ implies that $\mathcal{A}^n f\leq\mathcal{A}^n 1\ \forall\ n\geq 0$, it follows that $\mathcal{A}^n f\rightarrow 0$ on $[0,1)$ as $n\rightarrow\infty$.  Since $f$ is known to be a fixed point of $\mathcal{A}$, this then means that $f(x)=0$.  As explained in the introduction, this implies that $\mathbb{P}(V=\infty)=1$.  Recalling from the end of Section 2.2 that $V$ (the number of times the root is hit in the self-similar model on $\mathbb{T}_{3,2}$) is dominated by $Z$ (the number of times it is hit in the original model on $\mathbb{T}_{3,2}$), it follows that $\mathbb{P}(Z=\infty)=1$.  Thus we find that the frog model on $\mathbb{T}_{3,2}$ is indeed recurrent.  Hence, the proof of Theorem \ref{theorem:neat1} is complete.
\end{proof}

\section*{Acknowledgements}

The author would like to thank Toby Johnson for providing extensive background on the frog model and Konstantinos Karatapanis for several helpful conversations; thanks also to Marcus Michelen and Antonijo Mrcela for technical assistance.


\begin{thebibliography}{9}
 

\bibitem{HJJ1}
Christopher Hoffman, Tobias Johnson, and Matthew Junge, \textit{Recurrence and transience for the frog model on trees}, available at arXiv:1404.6238, 2015.

\bibitem{HJJ2}
Christopher Hoffman, Tobias Johnson, and Matthew Junge, \textit{From transience to recurrence with Poisson tree frogs}, available at arXiv:1501.05874, 2015.

\bibitem{JJ1}
Tobias Johnson and Matthew Junge, \textit{Stochastic orders and the frog model}, available at arXiv:1602.0441, 2016.

\end{thebibliography}
\end{document}